\newtheorem{theorem}{Theorem}[section]
\newtheorem{corollary}[theorem]{Corollary}
\newtheorem{lemma}[theorem]{Lemma}
\newtheorem{proposition}[theorem]{Proposition}
\newtheorem{prop}[theorem]{Proposition}
\newtheorem{remark}[theorem]{Remark}
\newcommand{\R}{\mathbb R}
\newcommand\norm[1]{\left\lVert#1\right\rVert}
\NewDocumentCommand{\mat}{mo}{%
	\IfValueTF{#2}{%
		\underline{\underline{#1}}{#2}
	}{%
		\underline{\underline{#1}}\,
	}%
}
\def\bu{\boldsymbol{u}}
\definecolor{darkspringgreen}{rgb}{0., 0.55, 0.3}
\definecolor{dartmouthgreen}{rgb}{0.05, 0.5, 0.06}
\definecolor{etonblue}{rgb}{0.59, 0.78, 0.64}
\definecolor{airforceblue}{rgb}{0., 0.4, 0.66}
\definecolor{arylideyellow}{rgb}{0.91, 0.84, 0.42}
\definecolor{emerald}{rgb}{0.31, 0.78, 0.47}
\definecolor{uclagold}{rgb}{1.0, 0.7, 0.0}
\definecolor{cadmiumorange}{rgb}{0.93, 0.53, 0.18}
\newcommand{\lopd}[0]{\mathcal{L}_\Delta}
\newcommand{\lopdt}[0]{\mathcal{L}_{\Delta}}
\newcommand{\usol}[0]{\underline{\uvec{u}}_\Delta}
\newcommand{\uapp}[0]{\uvec{u}_h}
\newcommand{\tess}[0]{\mathcal{T}_h}
\newcommand{\uvec}[2][3]{\boldsymbol{#2\mkern-#1mu}\mkern#1mu}
\newcommand{\dt}{\Delta t}
\newcommand{\bbu}{\underline{\uvec{u}}}
\newcommand{\by}{\uvec{y}}
\newcommand{\bG}{{\uvec{G}}}
\newcommand{\vecz}{\underline{0}}
\newcommand{\matz}{\underline{\underline{0}}}
\newcommand{\vecbeta}{\underline{\beta}}
\newcommand{\undu}{\underline{\uvec{u}}}
\newcommand{\undv}{\underline{\uvec{v}}}
\newcommand{\undw}{\underline{\uvec{w}}}
\newcommand{\undr}{\underline{\uvec{r}}}
\newcommand{\usolp}[0]{\underline{\uvec{u}}_{\Delta}^{(p)}}
\newcommand{\Xp}[0]{X^{(p)}}
\newcommand{\Yp}[0]{Y^{(p)}}
\newcommand{\uex}[0]{\underline{\uvec{u}}_{ex}}
\newcommand{\uexp}[0]{\underline{\uvec{u}}_{ex}^{(p)}}
\newcommand{\embep}[0]{\mathcal{E}^{(p)}}
\newcommand{\projp}[0]{\Pi^{(p)}}
\newcommand{\undG}[0]{\underline{\uvec{G}}}
\newcommand{\hphi}{\widehat{\phi}}
\newcommand{\uhphi}{\underline{\widehat{\phi}}}
\newcommand{\ADERIWF}{ADER-IWF}
\newcommand{\ADERRK}{ADER-IWF-RK} 
\newcommand{\Bp}{\mathcal{B}} 
\newcommand{\Cp}{\mathcal{C}} 
\newcommand{\Dp}{\mathcal{D}} 
\newcommand{\IM}{\mathcal{H}} 
\newcommand{\co}{u} 
\newcommand{\va}{v} 
\newcommand{\SRK}{S} 
\newcommand{\ADERdu}{ADERdu}
\newcommand{\ADERu}{ADERu} 
\newcommand{\cADER}{cADER} 
\newcommand{\CC}{C5}
\begin{document}
	\title{On improving the efficiency of ADER methods}

	\author{Maria Han Veiga\thanks{Department of Mathematics,  Ohio State University,  231 W 18th Ave,  Columbus,  OH 43210,  United States of America. Email: hanveiga.1@osu.edu },
	Lorenzo Micalizzi\thanks{Institute of Mathematics, University of Zurich, Winterthurerstrasse 190, Zurich, 8057, Switzerland. Email: lorenzo.micalizzi@math.uzh.ch},  
	Davide Torlo\thanks{SISSA mathLab, SISSA, via Bonomea 265, Trieste, 34136, Italy. Email: davide.torlo@sissa.it}
	}
	\date{}
	\maketitle
	
	\begin{abstract}
		The (modern) arbitrary derivative (ADER) approach is a popular technique for the numerical solution of differential problems based on iteratively solving an implicit discretization of their weak formulation. 
		In this work, focusing on an ODE context, we investigate several strategies to improve this approach. 
		Our initial emphasis is on the order of accuracy of the method in connection with the polynomial discretization of the weak formulation.  We demonstrate that precise choices lead to higher-order convergences in comparison to the existing literature.
		Then, we put ADER methods into a Deferred Correction (DeC) formalism. This allows to determine the optimal number of iterations, which is equal to the formal order of accuracy of the method, and to introduce efficient $p$-adaptive modifications. These are defined by matching the order of accuracy achieved and the degree of the polynomial reconstruction at each iteration.
		We provide analytical and numerical results, including the stability analysis of the new modified methods, the investigation of the computational efficiency, an application to adaptivity and an application to hyperbolic PDEs with a Spectral Difference (SD) space discretization.
	\end{abstract}

\section{Introduction}
\label{sec:Introduction}
Differential problems play a crucial role in the world of modeling of natural and technological processes. 
In fact, countless systems in many applications are described through ODEs and PDEs. 
Due to the variety of existing phenomena to be modeled, no unified theory exists and very different system-dependent differential problems are reported in literature. 
Despite such heterogeneity,  the vast majority of the mentioned mathematical models share a common feature: the impossibility to find analytical solutions, apart from very basic exceptions. 
At the moment, the only possible way to quantitatively cope with such models is to rely on numerical methods. 
Nevertheless, numerical approaches come with their own set of challenges. 
One of the biggest issues in such context is given by the computational cost associated to very accurate approximations of the sought analytical solutions, 
which require very refined discretizations, with consequent need for huge computational resources, in terms of computational time and memory consumption.

A classical expedient to reduce the computational cost, making hence the simulations more accessible, consists in the adoption of high order methods. 
In fact, such methods are well-known to require, at least on smooth problems, less computational resources for a fixed discretization error. 
This justifies the large interest, lately shown by the scientific community, in the construction of (arbitrary) high order frameworks for the numerical solution of differential problems, both for ODEs, with Runge-Kutta (RK) methods  \cite{butcher2016numerical,hairer1987solving,wanner1996solving}, predictor-corrector methods \cite{gautschi2011numerical} and multistep methods \cite{hairer1987solving}, and PDEs, with 
finite difference methods \cite{leveque2007finite,godlewski2013numerical}, finite volume methods \cite{leveque2002finite,eymard2000finite,versteeg2007introduction,godlewski2013numerical}, finite element methods \cite{ern2004theory, hesthaven2007nodal,cockburn2000development,gottlieb2001spectral}, SD methods \cite{jameson2010,veiga2021arbitrary,velasco2022spectral}, Residual Distribution methods \cite{RemiMarioRD,Decremi,abgrall2019high,abgrall2020high} and ADER methods \cite{balsara2009efficient,gaburro2021unified,boscheri2022continuous}.

Compared to the other frameworks, ADER is relatively new. In fact, originally introduced by Titarev and Toro \cite{toro2001towards,titarev2002ader} in 2001 for hyperbolic problems, as a time integrator for finite volume formulations, based on the Cauchy--Kovalevskaya theorem, it became popular in its modern formulation presented in 2008 \cite{dumbser2008unified}. 
The method consists in constructing nonlinear systems of algebraic equations, associated to high order discretizations of the weak formulation of the analytical problem under investigation, which are solved through an iterative procedure.
Despite its recent definition, the approach has been proven to be robust and reliable through numerous applications to many different fields such as ODEs contexts \cite{han2021dec}, Eulerian-Lagrangian formulations on unstructured moving meshes \cite{Lagrange2D,ALEMOOD1,ALEMOOD2,boscheri2019high,ArepoTN}, structure-preserving \cite{gaburro2022high,DOOM}, magnetohydrodynamics \cite{veiga2021arbitrary,balsara2009efficient,gaburro2021posteriori,dumbser2009very,zanotti2015space}, solid mechanics \cite{Busto2020,DUMBSER2017298}, compressible fluid-dynamics \cite{dumbser2007FVStiff,boscheri2022continuous,titarevtoro,velasco2022spectral,dumbser2010arbitrary,toro3,hidalgo2011ader}, aeroacustics \cite{dumbser2005ader,schwartzkopff2004fast,schwartzkopff2002ader}, adaptivity \cite{DOOM,zanotti2015space}, with parallel implementations used in the context of large scale simulations for real test cases \cite{dumbser2018efficient,rio2021massively}.
The mentioned literature does not pretend to be complete, however, it does indeed provide a clear proof of the level of maturity, of flexibility and of robustness of the ADER approach.


The goal of this paper is to introduce several strategies to drastically reduce the computational cost of ADER methods because, even if they have shown good performances in the context of large scale simulations, still they leave the door open for great improvements under many points of view, in particular at the level of the computational efficiency.
More in detail, in an ODE context, we discuss the following main contributions:

\begin{itemize}
	\item We start by investigating the role of the polynomial reconstruction of the numerical solution adopted in the context of the discretization of the weak formulation of the differential problem, leading to the definition of the ADER nonlinear system, later solved iteratively. 
	In particular,  we prove that careful choices on bases and quadrature points lead to schemes with order of accuracy higher than expected according to classical literature.  This is done by reinterpreting the ADER nonlinear system as an implicit high order RK, which is analyzed in depth.
	The computational advantage achieved in this context is far from being negligible. In fact, usually in the context of ADER methods, order $P$ is achieved by selecting polynomial discretizations of degree $P-1$ in time \cite{han2021dec,veiga2021arbitrary,velasco2022spectral,zanotti2015solving,fernandez2022arbitrary,rannabauer2018ader,dumbser2018efficient}. We show that the same accuracy can be obtained with a polynomial degree which is approximately $\frac{P}{2}$, with related saving of computational resources both in terms of memory and time. 
	A further result, in this context, concerns the link between ADER methods and Lobatto IIIC RK methods \cite{wanner1996solving}.
	\item We characterize the ADER methods obtained for general polynomial bases.  
	As ADER methods do not require particular constraints with respect to the basis functions adopted for the discretization of the weak formulation of the differential problem,  one could wonder whether the adoption of particular bases (e.g., modal bases) could lead to better schemes. We show that there is a strong link between ADER methods obtained with arbitrary bases and ADER methods with nodal bases, 
	which are proven to be equivalent under the assumption that the adopted quadratures coincide.
	\item By using the fact that ADER methods can be put in a DeC framework \cite{Decremi,loredavide,DOOM}, we exploit the DeC formulation of ADER methods, to construct efficient modifications of such schemes, following the idea introduced in \cite{loredavide} and generalized in \cite{DOOM}. 
	The new modified schemes are based on increasing the degree of the polynomial reconstruction of the numerical solution along the iterative procedure in such a way that the achieved accuracy matches the discretization accuracy of the ADER iterations.
	Further, we show how to recast the new schemes as explicit RK methods, by defining the related Butcher tableaux, and we study their linear stability, proving that the efficient modifications do not affect stability. 
	\item Lastly,  we introduce a natural way to perform $p$-adaptivity. 
	This aspect is particularly interesting, in fact, if on the one hand we have remarked how the adoption of high order methods results in a considerable computational advantage for a fixed discretization error, on the other hand one must notice that the error is not known a priori. 
	In principle, users are not strictly interested in the order of the method adopted, but rather on the final error being smaller than a given tolerance. 
	Therefore, in practical applications, high order methods should be used in combination with adaptive strategies able to estimate the error and to select the discretization and/or the order accordingly. 
	Unfortunately, such adaptive strategies are in general not easy to design but the particular structure of the proposed modified ADER methods offers a natural way to do it.
\end{itemize}

A rich variety of numerical tests is provided to show the computational advantages of the proposed modifications, with large registered speed-ups.

The structure of this work is the following: we introduce the ADER methods for ODEs in Section~\ref{sec:ADER}.
In Section~\ref{sec:analytical_results}, we study the accuracy of such methods with respect to the discretization of the weak formulation, showing that the adoption of Gauss--Lobatto (GLB) and Gauss--Legendre (GLG) bases leads to accuracy higher than expected in Section~\ref{app:proof_L2_orders} and characterizing the ADER methods obtained with general polynomial bases in Section~\ref{app:beyond_nodal}.
In Section~\ref{sec:DeC}, we introduce the DeC framework and show how the ADER methods can be reinterpreted as DeC methods, hence fixing the optimal number of iterations.
We present thus efficient modifications of ADER methods in Section~\ref{sec:ADERNEW} and their application to $p$-adaptivity. 
Then, in Section~\ref{sec:RK}, we show how the modified methods can be written in RK form, defining their Butcher tableaux, and we study their linear stability. 
In Section~\ref{sec:SD}, we describe an application of the ADER methods to hyperbolic PDEs with an SD space discretization via the method of lines.
The methods are validated in Section~\ref{sec:Numerics} and, finally, Section~\ref{sec:Conclusions} is left for conclusions and further developments.

Moreover, \ref{app:proofs} is dedicated to the proofs of several ``minor'' results presented along the paper. To maintain the flow of information smooth, we have placed these proofs in an appendix, so that readers can focus on the main concepts and conclusions, referring to the proofs at their convenience.
In \ref{app:GLB_Lobatto_IIIC}, the proof of an equivalence theorem between particular ADER methods and Lobatto IIIC schemes is reported, while, an overview of all symbols used in the paper can be found in \ref{app:notation}.

\section{ADER}
\label{sec:ADER}
In this section, we will present the original ADER method \cite{dumbser2008unified} in its simplified version for systems of ODEs, firstly described in \cite{han2021dec}.
Let us consider the following system
\begin{equation}
	\label{eq:ODE}
	\begin{cases}
		\frac{d}{dt}\uvec{u}(t) = \uvec{G}(t,\uvec{u}(t)),\quad t\in[0,T], \\
		\uvec{u}(0)=\uvec{z},
	\end{cases}
\end{equation}
where $\uvec{u} :\R^+_0 \to \mathbb{R}^Q$ is the sought solution, $\uvec{z} \in \R^Q$ the initial condition and $\uvec{G}: \R^+_0 \times \R^Q \to \R^Q$ a function that is continuous and Lipschitz-continuous with respect to $\uvec{u}$ uniformly with respect to $t$ with a Lipschitz constant $C_{Lip}$.
As usual in the context of one-step methods, we focus on a generic time interval $[t_n,t_{n+1}]$, with $t_{n+1}=t_{n}+\Delta t$, and we look for a recipe for $\uvec{u}_{n+1}\approx \uvec{u}(t_{n+1})$ from $\uvec{u}_{n}\approx \uvec{u}(t_{n})$.
In particular, as commonly done in the context of consistency analyses, we assume $\uvec{u}_{n}= \uvec{u}(t_{n})$.

\subsection{Method}
We consider the weak formulation of \eqref{eq:ODE} over $[t_n,t_{n+1}]$, obtained by multiplying the equation by a smooth test function $\psi(t)$ and applying integration by parts on the time derivative term 
\begin{equation}
	\psi(t_{n+1})\uvec{u}(t_{n+1})-\psi(t_n)\uvec{u}_n-\int_{t_n}^{t_{n+1}}\left(\frac{d}{dt}\psi(t)\right) \uvec{u}(t)dt - \int_{t_n}^{t_{n+1}} \psi(t)\uvec{G}(t,\uvec{u}(t))  dt = \uvec{0},
	\label{eq:weakproblem}
\end{equation}
where we remark that $\uvec{u}_{n}= \uvec{u}(t_{n})$ is known. 
We introduce now $M+1$ subtimenodes $t^m$ for $m=0,1,\dots,M$ in the interval $[t_n,t_{n+1}]$ such that 
\begin{equation}
	t_{n}\leq t^0 < t^1 < \dots < t^M\leq t_{n+1} 
\end{equation}
and we denote by $\uvec{u}^m$ an approximation of the exact solution $\uvec{u}(t)$ in $t^m$.
We pass to a discrete setting by projecting \eqref{eq:weakproblem} onto a finite dimensional functional space.
In particular, we replace $\uvec{u}(t)$ and $\uvec{G}(t,\uvec{u}(t))$ by their Lagrange interpolating polynomials of degree $M$ associated to the $M+1$ subtimenodes
\begin{align}
	\uapp(t):=\sum_{m=0}^M \uvec{u}^{m} \psi^{m}(t), \label{eq:uh} \qquad 
	\uvec{G}_h(t):=\sum_{m=0}^M \uvec{G}(t^m, \uvec{u}^{m}) \psi^{m}(t),
\end{align}
leading to the ADER implicit weak form (\ADERIWF)
\begin{align}
	\begin{split}
		\sum_{m=0}^M \Bigg[ \psi^{\ell}(t_{n+1})\psi^m(t_{n+1}) & -\int_{t_n}^{t_{n+1}} \left(\frac{d}{dt}\psi^\ell(t)\right)\psi^m(t)dt   \Bigg]\uvec{u}^{m}-\psi^\ell(t_n) \uvec{u}_n \\
		&- \sum_{m=0}^M \left( \int_{t_n}^{t_{n+1}} \psi^\ell(t)\psi^m(t) dt \right) \uvec{G}(t^m,\uvec{u}^m) =\uvec{0}, \quad \forall \ell=0,\dots,M,
	\end{split}
	\label{eq:weakproblemdiscrete}
\end{align}
which is a nonlinear system in the unknowns $\uvec{u}^{m}$. 

As we are going to see in the next subsection, such system can be put in a compact matrix formulation and, under time step restrictions, it can be solved through an explicit iterative strategy. 
Its solution consists of the coefficients of the continuous representation \eqref{eq:uh} of the numerical solution $\uvec{u}_h$, through which we get $\uvec{u}_{n+1}:=\uapp(t_{n+1})$. 
The order of accuracy of $\uvec{u}_{n+1}$ with respect to the exact solution $\uvec{u}(t_{n+1})$,  denoted by $N$, depends on the choice of the subtimenodes and on the quadrature rule used for integrals.
Summarizing, the ADER method consists in solving iteratively the \ADERIWF~\eqref{eq:weakproblemdiscrete} with respect to the coefficients $\uvec{u}^{m}$, which are later used to compute $\uvec{u}_{n+1}$.

\begin{remark}[On the order of accuracy]
	Several choices of subtimenodes and quadrature rules are possible.
	A generic distribution of $M+1$ subtimenodes $t^m$ in the interval $[t_{n},t_{n+1}]$, e.g., equispaced, guarantees, in general, an order of accuracy of the resulting ADER method equal to $N=M+1$. 
	However, particular choices yield higher accuracy, for example if we choose, both for the basis function definitions and for the quadrature rule, to use $M+1$ GLB subtimenodes we obtain order of accuracy equal to $N=2M$, while with $M+1$ GLG nodes we get an accuracy of $N=2M+1$. The proofs of the accuracy are based on theoretical results in the context of the RK methods and are presented in Section~\ref{sec:analytical_results}, in Theorems \ref{th:order_GLB} and \ref{th:order_GLG}.
\end{remark}

\begin{remark}[On the computation of $\uvec{u}_{n+1}$]
	The \ADERIWF~\eqref{eq:weakproblemdiscrete}, together with the reconstruction \eqref{eq:uh} of $\uvec{u}_h$ in $t_{n+1}$, could be interpreted as an implicit high order RK, referred as \ADERRK, as we are going to show in Theorem \ref{th:l2isRK}.
	In particular, in the context of the computation of $\uvec{u}_{n+1}$ after the solution of the nonlinear system, the final interpolation could be equivalently replaced by the following integration
	\begin{align}
		\label{eq:integration}
		\begin{split}
			\uvec{u}_{n+1}&=\uvec{u}_{n}+\int_{t_n}^{t_{n+1}} \uvec{G}_h(t)dt				   =\uvec{u}_{n}+\sum_{m=0}^M \left( \int_{t_n}^{t_{n+1}} \psi^m(t) dt \right)\uvec{G}(t^{m}, \uvec{u}^{m}).
		\end{split}
	\end{align}		
	
	In the ADER formulation for hyperbolic PDEs introduced in \cite{dumbser2008unified}, the final integration allows communications between neighboring cells after an iterative procedure used to solve local weak formulations in space-time control volumes.

\end{remark}

\begin{remark}[On the difference between ADER and \ADERRK~and non-suitability for stiff problems]
	The ADER method is based on an explicit iterative procedure for the solution of the nonlinear system \eqref{eq:weakproblemdiscrete} and it has, hence, an explicit character, not suited for stiff problems. On the other hand, the \ADERRK~is an implicit method. Therefore, an ADER method and its associated \ADERRK~method do not share the same properties in terms of stability.
	
	Let us notice that implicit versions of ADER, based on an implicit treatment of $\uvec{G}$ in the context of the iterative procedure and, hence, suitable for stiff problems, exist in literature \cite{han2021dec} but they will not be investigated in this work.
	The application of the efficient modifications proposed in this work to implicit ADER methods is currently under investigation.
\end{remark}

\subsection{Matrix formulation and explicit iterative solution}
The \ADERIWF~\eqref{eq:weakproblemdiscrete} can be recast in the following matrix formulation
\begin{equation}
	B \undu - \undr - \Delta t \Lambda  \undG(\undu) =\uvec{0},
	\label{eq:ADER_system}
\end{equation}
where $B$ and $\Lambda$ are matrices, while, $\undu$, $\undr$ and $\undG(\undu)$ are vectors, defined by
\begin{align}
	\begin{split}
		B_{\ell,m}:&= \psi^\ell(t_{n+1})\psi^m(t_{n+1})- \int_{t_{n}}^{t_{n+1}} \left(\frac{d}{dt}\psi^\ell(t)\right)\psi^m(t)dt =\widehat{\psi}^\ell(1) \widehat{\psi}^m(1)- \int_{0}^{1} \left(\frac{d}{d\xi}\widehat{\psi}^\ell(\xi)\right)\widehat{\psi}^m(\xi)d\xi ,\\
		\Lambda_{\ell,m}:&= \frac{1}{\Delta t}\int_{t_{n}}^{t_{n+1}} \psi^\ell(t)\psi^m(t) dt=\int_{0}^{1} \widehat{\psi}^\ell(\xi)\widehat{\psi}^m(\xi)d\xi\\
		\undu:&=\begin{pmatrix}
			\uvec{u}^0\\
			\vdots\\
			\uvec{u}^M
		\end{pmatrix},\quad
		\undr:=\begin{pmatrix}
			\psi^0(t_n)\uvec{u}_n\\
			\vdots\\
			\psi^M(t_n)\uvec{u}_n
		\end{pmatrix},\quad
		\undG(\undu):=\begin{pmatrix}
			\uvec{G}(t^0,\uvec{u}^0)\\
			\vdots\\
			\uvec{G}(t^M,\uvec{u}^M)
		\end{pmatrix}
	\end{split}
	\label{eq:ADER_structure}
\end{align}
with $\widehat{\psi}^m(\xi):=\psi^m(t_{n}+\Delta t \xi)$ being the Lagrange basis functions remapped onto the interval $[0,1].$ 
For simplicity, the matrices $B$ and $\Lambda$ were defined for a scalar problem, they need to be block expanded for a vectorial problem.
Furthermore, inverting the matrix $B$, from \eqref{eq:ADER_system} we get
\begin{equation}
	\undu-\undu_n-\Delta t B^{-1}\Lambda  \underline{\uvec{G}}(\underline{\uvec{u}})=\uvec{0}, \quad \text{with} \quad \undu_n:=\begin{pmatrix}
		\uvec{u}_n\\
		\vdots\\
		\uvec{u}_n
	\end{pmatrix},
	\label{eq:ADER_system_final}
\end{equation}
thanks to the following proposition, whose proof can be found in \ref{app:r}.
\begin{proposition}\label{prop:r}
	If $B$ and $\uvec{r}$ are defined as in \eqref{eq:ADER_structure} and $\undu_n$ as in \eqref{eq:ADER_system_final}, then
	$B^{-1}\uvec{r}=\undu_n.$
\end{proposition}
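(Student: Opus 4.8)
The plan is to recast the claim in the equivalent multiplicative form and verify it row by row. Since the statement already presupposes that $B$ is invertible (the symbol $B^{-1}$ appears in \eqref{eq:ADER_system_final}), proving $B^{-1}\uvec{r}=\undu_n$ is the same as proving $B\,\undu_n=\uvec{r}$. First I would compute the $\ell$-th block row of $B\,\undu_n$. Because every block of $\undu_n$ equals the same vector $\uvec{u}_n$, this row factorizes as
\begin{equation}
	(B\,\undu_n)_\ell=\left(\sum_{m=0}^M B_{\ell,m}\right)\uvec{u}_n,
	\label{eq:plan_row}
\end{equation}
so the whole problem reduces to the scalar identity $\sum_{m=0}^M B_{\ell,m}=\psi^\ell(t_n)$ for every $\ell$, since $\uvec{r}_\ell=\psi^\ell(t_n)\uvec{u}_n$ by definition in \eqref{eq:ADER_structure}.

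Next I would substitute the explicit form of $B_{\ell,m}$ from \eqref{eq:ADER_structure} and push the summation over $m$ inside, writing
\begin{equation}
	\sum_{m=0}^M B_{\ell,m}=\widehat{\psi}^\ell(1)\sum_{m=0}^M\widehat{\psi}^m(1)-\int_0^1\left(\frac{d}{d\xi}\widehat{\psi}^\ell(\xi)\right)\left(\sum_{m=0}^M\widehat{\psi}^m(\xi)\right)d\xi.
	\label{eq:plan_sum}
\end{equation}
The crucial ingredient is that the remapped Lagrange basis functions form a partition of unity, $\sum_{m=0}^M\widehat{\psi}^m(\xi)\equiv 1$ on $[0,1]$; this holds because the degree-$M$ Lagrange interpolant of the constant function $1$ on the $M+1$ subtimenodes is exactly $1$. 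Inserting this into \eqref{eq:plan_sum} collapses both the boundary factor and the integrand.

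Finally I would apply the fundamental theorem of calculus to the surviving integral: \eqref{eq:plan_sum} becomes $\widehat{\psi}^\ell(1)-\int_0^1\frac{d}{d\xi}\widehat{\psi}^\ell(\xi)\,d\xi=\widehat{\psi}^\ell(1)-\bigl(\widehat{\psi}^\ell(1)-\widehat{\psi}^\ell(0)\bigr)=\widehat{\psi}^\ell(0)$. Recalling the definition $\widehat{\psi}^\ell(\xi)=\psi^\ell(t_n+\Delta t\,\xi)$, we have $\widehat{\psi}^\ell(0)=\psi^\ell(t_n)$, which is precisely the required identity. Combined with \eqref{eq:plan_row} this yields $B\,\undu_n=\uvec{r}$, and hence $B^{-1}\uvec{r}=\undu_n$.

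There is no real obstacle here beyond spotting the right reformulation: the proof is essentially a one-line consequence of the partition-of-unity property together with the telescoping of the derivative integral. The only thing worth stating carefully is the partition-of-unity identity itself, since everything else follows mechanically; invertibility of $B$ is taken for granted from the surrounding discussion, so it need not be re-established here.
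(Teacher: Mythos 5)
Your proof is correct and follows essentially the same route as the paper's: both reformulate the claim as $B\,\undu_n=\uvec{r}$, exploit the partition-of-unity property of the Lagrange basis to collapse the sum over $m$, and then telescope the derivative integral via the fundamental theorem of calculus to recover $\psi^\ell(t_n)$. The only cosmetic difference is that you work with the rescaled functions $\widehat{\psi}^\ell$ on $[0,1]$ while the paper computes directly in $t$ on $[t_n,t_{n+1}]$, which is immaterial since \eqref{eq:ADER_structure} gives both forms of $B_{\ell,m}$.
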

For $\Delta t$ small enough, system \eqref{eq:ADER_system_final} has a unique solution, which can be obtained as the limit of the following explicit iterative procedure
\begin{equation}
	\undu^{(p)}:=\undu_n+\Delta t B^{-1}\Lambda  \underline{\uvec{G}}(\undu^{(p-1)}), \quad p\geq 1,
	\label{eq:ADER_Picard}
\end{equation}
which converges independently of the chosen initial guess $\undu^{(0)}$, due to the following proposition, whose proof can be found in \ref{app:iterative_procedure}.
\begin{proposition}[Convergence of the iterative procedure]\label{prop:iterative_procedure}
	For $\Delta t$ small enough, a unique solution to \eqref{eq:ADER_system_final} exists and it coincides with the limit of the iterative procedure \eqref{eq:ADER_Picard}. 
	In particular, the convergence is ensured for $\Delta t< \frac{1}{\norm{B^{-1}\Lambda}_\infty C_{Lip}}$.
\end{proposition}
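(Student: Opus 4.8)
The plan is to read \eqref{eq:ADER_system_final} as a fixed-point equation and to apply the Banach fixed-point theorem. Concretely, I would define the map
\begin{equation*}
	\mathcal{T}(\undv):=\undu_n+\Delta t\, B^{-1}\Lambda\,\undG(\undv)
\end{equation*}
on the Banach space $\left(\R^{Q(M+1)},\norm{\cdot}_\infty\right)$, where for a block vector $\undv=(\uvec{v}^0,\dots,\uvec{v}^M)^T$ we set $\norm{\undv}_\infty:=\max_{m}\norm{\uvec{v}^m}_\infty$. With this notation, a solution of \eqref{eq:ADER_system_final} is precisely a fixed point of $\mathcal{T}$, and the iterates \eqref{eq:ADER_Picard} satisfy $\undu^{(p)}=\mathcal{T}(\undu^{(p-1)})$. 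The whole statement therefore reduces to proving that $\mathcal{T}$ is a contraction under the stated time-step restriction, since the Banach fixed-point theorem then delivers simultaneously existence, uniqueness, and convergence of \eqref{eq:ADER_Picard} from any initial guess $\undu^{(0)}$.

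For the contraction estimate I would take two states $\undv,\undw$ and bound
\begin{equation*}
	\norm{\mathcal{T}(\undv)-\mathcal{T}(\undw)}_\infty=\Delta t\,\norm{B^{-1}\Lambda\bigl(\undG(\undv)-\undG(\undw)\bigr)}_\infty\leq \Delta t\,\norm{B^{-1}\Lambda}_\infty\,\norm{\undG(\undv)-\undG(\undw)}_\infty .
\end{equation*}
The Lipschitz continuity of $\uvec{G}$ with constant $C_{Lip}$, uniform in $t$, is then applied block by block: since the $m$-th block of $\undG(\undv)-\undG(\undw)$ is $\uvec{G}(t^m,\uvec{v}^m)-\uvec{G}(t^m,\uvec{w}^m)$, one gets $\norm{\undG(\undv)-\undG(\undw)}_\infty\leq C_{Lip}\norm{\undv-\undw}_\infty$. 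Combining the two inequalities shows that $\mathcal{T}$ is Lipschitz with constant $\Delta t\,\norm{B^{-1}\Lambda}_\infty C_{Lip}$, which is strictly below $1$ exactly when $\Delta t<\tfrac{1}{\norm{B^{-1}\Lambda}_\infty C_{Lip}}$, as claimed.

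The one point requiring care, which I expect to be the main (though ultimately routine) obstacle, is the interplay between the block-expansion and the chosen norms. The matrix $B^{-1}\Lambda$ is defined for a scalar problem and block-expanded as $B^{-1}\Lambda\otimes I_Q$ for the vectorial case; I would verify that its induced $\infty$-norm on block vectors equals the scalar $\norm{B^{-1}\Lambda}_\infty$, which holds because replacing each scalar entry by a scaled identity block leaves the absolute row sums unchanged. Once this compatibility is established, the estimate above is clean and the contraction constant is exactly the one appearing in the statement. I would close by invoking the Banach fixed-point theorem to obtain the unique fixed point and the convergence of \eqref{eq:ADER_Picard} independently of $\undu^{(0)}$.
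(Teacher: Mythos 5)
Your proposal is correct and follows essentially the same route as the paper's proof: the same fixed-point map, the same contraction estimate in the $\infty$-norm yielding the Lipschitz constant $\Delta t\,\norm{B^{-1}\Lambda}_\infty C_{Lip}$, and the same appeal to the Banach fixed-point theorem for existence, uniqueness, and convergence independent of the initial guess. Your additional check that block-expanding $B^{-1}\Lambda$ leaves its induced $\infty$-norm unchanged is a valid point of care that the paper's proof passes over silently.
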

The time step restriction resulting from Proposition \ref{prop:iterative_procedure} amounts to a classical time step restriction for explicit methods. In fact, the step size $\Delta t$ is constrained by the inverse of the Lipschitz constant $C_{Lip}$ of $\uvec{G}$ up to a constant, $\norm{B^{-1}\Lambda}_\infty$, independent of $\Delta t$. Let us notice that such estimate for the upper bound of $\Delta t$ might not be optimal and could be improved, for example, by choosing other norms in the context of the proof.

Let us notice that, since the expected discretization accuracy of $\uvec{u}_{n+1}:=\uapp(t_{n+1})$ is $N$, there is no need to  solve the nonlinear system \eqref{eq:ADER_system_final} up to a tolerance that is stricter than the discretization error corresponding to such accuracy. In general, obtaining an $N$-th order accurate approximation of the solution of \eqref{eq:ADER_system_final} is sufficient and this is possible by performing exactly $N$ iterations of \eqref{eq:ADER_Picard}, as we are going to see in Section~\ref{sec:DeC}, by putting the ADER method in a DeC formalism and showing that each iteration increases the order of accuracy by one.

Before continuing, we state here a result concerning the invertibility of the matrix $B$. 
\begin{theorem}[Invertibility of $B$]\label{th:invB}
	The ADER matrix $B$, defined for any generic basis $\left\lbrace\hphi^{m}(\xi)\right\rbrace_{m=0,\dots,M}$ of the space of the polynomials of degree $M$ over $[0,1]$ as 
	\begin{align}
		B_{\ell,m}:=\widehat{\phi}^\ell(1) \widehat{\phi}^m(1)- \int_{0}^{1} \left(\frac{d}{d\xi}\widehat{\phi}^\ell(\xi)\right)\widehat{\phi}^m(\xi)d\xi,
	\end{align}
	is invertible.
\end{theorem}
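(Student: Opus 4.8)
The plan is to read $B$ as the Gram matrix of the bilinear form
$a(v,u) := v(1)u(1) - \int_0^1 \big(\tfrac{d}{d\xi}v(\xi)\big)\,u(\xi)\,d\xi$
on the space $\PP_M$ of polynomials of degree at most $M$ on $[0,1]$, so that $B_{\ell,m} = a(\hphi^\ell,\hphi^m)$. Since $\{\hphi^m\}_{m=0}^{M}$ is a basis of $\PP_M$, invertibility of $B$ is equivalent to non-degeneracy of $a$, and it suffices to show the kernel is trivial. Concretely, if $\underline{\uvec{c}}=(c_0,\dots,c_M)^{T}$ satisfies $B\underline{\uvec{c}}=\vecz$, then setting $u := \sum_{m=0}^{M} c_m \hphi^m \in \PP_M$ one checks that the $\ell$-th component of $B\underline{\uvec{c}}$ equals $a(\hphi^\ell,u)$, so the condition reads $a(v,u)=0$ for every $v\in\PP_M$. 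I want to conclude $u\equiv 0$, which forces $\underline{\uvec{c}}=\vecz$.

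First I would test against constants: taking $v\equiv 1$ kills the derivative term and gives $a(1,u)=u(1)=0$. Substituting $u(1)=0$ back into $a(v,u)=v(1)u(1)-\int_0^1 v'(\xi)u(\xi)\,d\xi$ collapses the boundary contribution, leaving $\int_0^1 v'(\xi)u(\xi)\,d\xi = 0$ for all $v\in\PP_M$. As $v$ ranges over $\PP_M$, its derivative $v'$ ranges over all of $\PP_{M-1}$, so this says precisely that $u$ is $L^2([0,1])$-orthogonal to $\PP_{M-1}$.

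A polynomial $u\in\PP_M$ that is orthogonal to $\PP_{M-1}$ must be a scalar multiple of the \emph{shifted Legendre polynomial} of degree $M$ on $[0,1]$. That polynomial does not vanish at the right endpoint (shifted Legendre polynomials take the value $\pm 1$ at $\xi=1$), whereas we have already shown $u(1)=0$; hence the scalar multiple is zero and $u\equiv 0$. This closes the argument.

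The step I expect to need the most care is the non-symmetry of $a$: one might hope to conclude directly from positivity, but integration by parts gives $a(u,u)=\tfrac12\big(u(0)^2+u(1)^2\big)$, which is only positive semidefinite, since it vanishes on the nonzero polynomials with $u(0)=u(1)=0$ whenever $M\ge 2$. Thus a coercivity/definiteness shortcut is unavailable, and the genuine content lies in combining the endpoint condition $u(1)=0$ with the orthogonality $u\perp\PP_{M-1}$, exploiting that the degree-$M$ orthogonal polynomial is nonzero at $\xi=1$. I would also remark that, because a change of basis replaces $B$ by $C^{T}BC$ with $C$ invertible, it would in principle suffice to verify invertibility for one convenient basis; but the kernel argument above works verbatim for an arbitrary basis, so no such reduction is needed.
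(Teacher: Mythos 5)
Your proof is correct, and it takes a genuinely different route from the paper's. You argue intrinsically: reading $B$ as the matrix of the bilinear form $a(v,u)=v(1)u(1)-\int_0^1 v'u\,d\xi$ on $\PP_M$, you show its right kernel is trivial by testing with $v\equiv 1$ to get $u(1)=0$, then using surjectivity of $v\mapsto v'$ onto $\PP_{M-1}$ to get $u\perp\PP_{M-1}$ in $L^2(0,1)$, so that $u$ is a multiple of the degree-$M$ shifted Legendre polynomial, which does not vanish at $\xi=1$ (its value there is exactly $1$; more robustly, all zeros of orthogonal polynomials lie in the open interval), forcing $u\equiv 0$. The paper instead exploits the congruence $B=\mathcal{H}^T B_{GL*}\mathcal{H}$ established in the proof of Theorem~\ref{th:link}, which reduces the claim to a single convenient basis; it then takes the monomials $\hphi^m(\xi)=\xi^m$, computes $B_{0,m}=1$ and $B_{\ell,m}=\frac{m}{\ell+m}$ for $\ell\geq 1$, observes that the first column is $(1,0,\dots,0)^T$, and after rescaling columns recognizes the remaining block as the Hilbert-type matrix $\left(\frac{1}{\ell+m}\right)_{\ell,m=1,\dots,M}$, whose invertibility is a classical fact. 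What each approach buys: yours is self-contained and coordinate-free, needing neither the equivalence theorem nor the Hilbert-matrix citation, and it works verbatim for an arbitrary basis (as you note, the change-of-basis reduction is available but unnecessary); the paper's proof reuses machinery it has already built and lands on a well-known explicit matrix, at the price of depending on Theorem~\ref{th:link} and an external reference. Your closing remark that $a(u,u)=\frac12\left(u(0)^2+u(1)^2\right)$ is only semidefinite is also apt: it correctly explains why no coercivity shortcut is available and why the combination of the endpoint condition with orthogonality is the real content of the proof.
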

The construction of ADER methods is strongly based on the assumption that the matrix $B$ is nonsingular, however, the problem of the existence of its inverse has been, up to authors' knowledge, never investigated in literature. 
In fact, the proof of Theorem~\ref{th:invB} is less immediate than the proofs of the invertibility of classical mass and stiffness matrices of standard finite element formulations and can be found in \ref{app:invertB}.

\section{Accuracy of \ADERIWF}
\label{sec:analytical_results}
This section is divided into two subsections. In the first one, we will discuss the order of accuracy of ADER methods for equispaced, GLB and GLG subtimenodes. In the second one, we will present an original result concerning ADER methods with arbitrary bases.

Both subsections address important aspects in the context of the computational efficiency of ADER methods. In fact, in several works \cite{han2021dec,veiga2021arbitrary,velasco2022spectral,zanotti2015solving,fernandez2022arbitrary,rannabauer2018ader,dumbser2018efficient}, $(M+1)$-th order of accuracy has been obtained via $M+1$ GLG subtimenodes leading to a polynomial reconstruction in time of degree $M$; however, as we are going to show, nearly half of the subtimenodes (with consequent shorter computational times and smaller memory consumption) can guarantee the same accuracy order.
In particular, in Theorem \ref{th:order_GLB} and Theorem \ref{th:order_GLG}, we show that $M+1$ subtimenodes correspond to an accuracy order equal to $2M$ and $2M+1$, respectively for GLB and GLG subtimenodes if the associated quadrature formulas are adopted to compute the integrals.

Moreover, we show in Theorem \ref{th:link} that any general set of polynomial basis functions (not necessarily Lagrangian) lead to the same schemes defined above, 
under mild assumptions.

The quadrature formulas used to compute the integrals in the ADER terms will play a crucial role in this section. Whenever not specified, we assume an exact integration.

\subsection{Accuracy of ADER with nodal bases}\label{app:proof_L2_orders}
In this subsection, we will show that the order of accuracy of the ADER methods with $M+1$ subtimenodes is $N=M+1$ for equispaced subtimenodes with exact quadrature and, respectively, $N=2M$ and $N=2M+1$ for GLB and GLG subtimenodes, when adopting the associated quadrature formulas.
We remark that the accuracy of an ADER method is referred to the approximation $\uvec{u}_{n+1}$, obtained via the reconstruction \eqref{eq:uh}, after the solution of the \ADERIWF~\eqref{eq:weakproblemdiscrete}. 
In the following, we will show how it is possible to associate an implicit high order RK scheme with $S=M+1$ stages to a general ADER method.

Let us recall the structure of a RK scheme  \cite{butcher1964implicit} with $S$ stages 
\begin{equation}\label{eq:RK_ACCURACY_NODAL}
	\begin{cases}
		\by^s = \uvec{u}_n + \dt \sum_{r = 0}^{S-1} a_{s,r} \bG(t_n+c_r \dt, \by^r), \quad \text{for }s=0,\dots,S-1,\\
		\uvec{u}_{n+1}  = \uvec{u}_n + \dt \sum_{r = 0}^{S-1} b_{r} \bG(t_n+c_r \dt, \by^r),
	\end{cases}	 
\end{equation}
where $a_{s,r}$, $b_r$ and  $c_r$ for $s,r=0,\dots ,S-1$ are some coefficients that characterize the method, usually stored in matrix $A$ and vectors $\uvec{b}$ and $\uvec{c}$ respectively. The associated Butcher tableaux defines the method as well 
$$\begin{array}{c|c}
	\uvec{c}& A  \\
	\hline
	& \uvec{b} 
\end{array}.
$$

In the following, we will denote by $\xi^{m}:=\frac{t^{m}-t_n}{\Delta t}$ the ADER subtimenodes rescaled into $[0,1]$, and by $w_m:=\int_0^1 \widehat{\psi}^m d\xi$ the quadrature weights of the induced quadrature formula.
Further, we collect the $\xi^{m}$ values in the vector $\underline{\beta}:=\left( \xi^0 , \dots, \xi^M\right)^T$.
We can now present the first result of this subsection concerning the link between \ADERIWF~and implicit RK methods.

\begin{theorem}[The \ADERIWF~\eqref{eq:weakproblemdiscrete} is an implicit RK]\label{th:l2isRK}
	Solving the \ADERIWF~\eqref{eq:weakproblemdiscrete} and then using reconstruction \eqref{eq:uh} to get $\uvec{u}_{n+1}$ with $M\geq 1$ is equivalent to the implicit RK method characterized by $A:=B^{-1}\Lambda$, $\uvec{c}:=\vecbeta$ and $\uvec{b}$ defined by $b_m:=w_m$. 
\end{theorem}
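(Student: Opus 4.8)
The plan is to show that, once $B$ has been eliminated, the ADER system \eqref{eq:ADER_system_final} is literally the system of stage equations of an implicit RK method, and then to identify the quadrature weights $w_m$ as the RK weights $b_m$ produced by the final reconstruction \eqref{eq:uh}. The stage part is essentially a relabelling; the genuine content lies entirely in the weight identity for $\uvec{b}$.

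First I would invert $B$, which is legitimate by Theorem~\ref{th:invB}, and invoke Proposition~\ref{prop:r} to pass from \eqref{eq:ADER_system} to \eqref{eq:ADER_system_final}, i.e. $\undu = \underline{\uvec{u}}_n + \dt\, B^{-1}\Lambda\, \undG(\undu)$. Reading this row by row with $A := B^{-1}\Lambda$ and recalling $t^m = t_n + \xi^m \dt$, one obtains for each $m=0,\dots,M$
\begin{equation*}
	\uvec{u}^m = \uvec{u}_n + \dt \sum_{k=0}^M A_{m,k}\, \uvec{G}(t_n + \xi^k \dt, \uvec{u}^k),
\end{equation*}
which is exactly the stage system in \eqref{eq:RK_ACCURACY_NODAL} with $S=M+1$ stages, stage values $\by^s := \uvec{u}^s$, abscissae $\uvec{c} := \vecbeta$, and matrix $A = B^{-1}\Lambda$. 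This settles everything except the final update.

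For the update I would expand the reconstruction at the endpoint, $\uvec{u}_{n+1} = \uapp(t_{n+1}) = \sum_m \widehat{\psi}^m(1)\,\uvec{u}^m$, substitute the stage equations above, and collect terms to get
\begin{equation*}
	\uvec{u}_{n+1} = \Big(\sum_m \widehat{\psi}^m(1)\Big)\uvec{u}_n + \dt \sum_{k}\Big(\sum_m \widehat{\psi}^m(1)\, A_{m,k}\Big)\uvec{G}(t^k,\uvec{u}^k).
\end{equation*}
Matching this with \eqref{eq:RK_ACCURACY_NODAL} reduces the theorem to the two identities $\sum_m \widehat{\psi}^m(1) = 1$ and $\sum_m \widehat{\psi}^m(1)\, A_{m,k} = w_k$. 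The first is immediate from the partition-of-unity property of the Lagrange basis, $\sum_m \widehat{\psi}^m \equiv 1$.

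The key and only nontrivial step is the weight identity, which I would write as $\uvec{e}^T B^{-1}\Lambda = \uvec{w}^T$ with $e_\ell := \widehat{\psi}^\ell(1)$ and $w_m := \int_0^1 \widehat{\psi}^m\,d\xi$. I expect to establish it by first showing that the all-ones row vector $\underline{1}^T := (1,\dots,1)$ satisfies $\underline{1}^T B = \uvec{e}^T$, hence $\uvec{e}^T B^{-1} = \underline{1}^T$: summing the definition of $B_{\ell,m}$ over $\ell$ and using both $\sum_\ell \widehat{\psi}^\ell \equiv 1$ and its consequence $\sum_\ell (\widehat{\psi}^\ell)' \equiv 0$ collapses the column sum exactly to $\widehat{\psi}^m(1) = e_m$. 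Then $\uvec{e}^T B^{-1}\Lambda = \underline{1}^T \Lambda$, and a second use of the partition of unity gives $(\underline{1}^T\Lambda)_m = \int_0^1\big(\sum_\ell \widehat{\psi}^\ell\big)\widehat{\psi}^m\,d\xi = \int_0^1 \widehat{\psi}^m\,d\xi = w_m$. Both the endpoint identity and the weight identity thus reduce to the single elementary fact that the Lagrange basis reproduces constants, which is the conceptual heart of the proof and the step I would take the most care over.
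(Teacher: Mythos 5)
Your proof is correct and is essentially the paper's own argument: the stage identification is the same relabelling of \eqref{eq:ADER_system_final}, and your weight identity rests on exactly the same partition-of-unity collapse ($\sum_\ell \widehat{\psi}^\ell \equiv 1$, hence $\sum_\ell (\widehat{\psi}^\ell)' \equiv 0$) that the paper uses. The only difference is cosmetic arrangement: the paper sums the weak form \eqref{eq:weakproblemdiscrete} over the test index $\ell$, i.e.\ left-multiplies $B\undu = \undr + \Delta t\,\Lambda\,\undG(\undu)$ by $\underline{1}^T$ (under which $B \mapsto \uvec{e}^T$ and $\Lambda \mapsto \uvec{w}^T$), whereas you left-multiply the inverted system by $\uvec{e}^T$ and prove $\uvec{e}^T B^{-1} = \underline{1}^T$ as an intermediate step --- transposed readings of the identical identity $\underline{1}^T B = \uvec{e}^T$.
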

The proof can be found in \ref{app:l2isRK}.	
Given an ADER method, we will refer to the associated RK method as \ADERRK.
It is possible to show that such RK methods fulfill classical properties of high order RK schemes, for example the following proposition holds. 
\begin{proposition}\label{prop:HORK_condition}
	A general \ADERRK~method with $M\geq 1$ satisfies condition 
	\begin{equation}
		\sum_{j=0}^{S-1}a_{i,j}=c_i, \quad i=0,\dots, S-1.
		\label{eq:semirequiredforRK}
	\end{equation}	
\end{proposition}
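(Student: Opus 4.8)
The plan is to rewrite condition \eqref{eq:semirequiredforRK} in the compact matrix form $A\mathbf{1}=\uvec{c}$, where $\mathbf{1}$ denotes the vector of all ones, and to exploit the explicit expressions for $A$ and $\uvec{c}$ supplied by Theorem~\ref{th:l2isRK}. Since $A=B^{-1}\Lambda$ and $\uvec{c}=\vecbeta=(\xi^0,\dots,\xi^M)^T$, and since $B$ is invertible by Theorem~\ref{th:invB}, the identity $A\mathbf{1}=\vecbeta$ is equivalent to $\Lambda\mathbf{1}=B\vecbeta$. Hence it suffices to verify this last vector identity componentwise, using only the definitions of $B$ and $\Lambda$ in \eqref{eq:ADER_structure} together with two elementary properties of the Lagrange basis $\{\widehat{\psi}^m\}$: the partition of unity $\sum_{m=0}^M \widehat{\psi}^m(\xi)\equiv 1$ and the exact reproduction of the identity, $\sum_{m=0}^M \xi^m\widehat{\psi}^m(\xi)=\xi$.

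For the left-hand side, summing the entries of $\Lambda$ along row $\ell$ and pulling the finite sum inside the integral, the partition of unity collapses $\sum_m \widehat{\psi}^m(\xi)$ to $1$, giving $(\Lambda\mathbf{1})_\ell=\int_0^1\widehat{\psi}^\ell(\xi)\,d\xi$. For the right-hand side, I would compute $(B\vecbeta)_\ell=\sum_m B_{\ell,m}\,\xi^m$ and substitute $\sum_m \xi^m\widehat{\psi}^m(\xi)=\xi$ into both the boundary term and the integral term of \eqref{eq:ADER_structure}; evaluating the reproduced identity at $\xi=1$ turns the boundary contribution into $\widehat{\psi}^\ell(1)$, so that $(B\vecbeta)_\ell=\widehat{\psi}^\ell(1)-\int_0^1\left(\frac{d}{d\xi}\widehat{\psi}^\ell(\xi)\right)\xi\,d\xi$. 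A single integration by parts on the remaining integral gives $\int_0^1\left(\frac{d}{d\xi}\widehat{\psi}^\ell(\xi)\right)\xi\,d\xi=\widehat{\psi}^\ell(1)-\int_0^1\widehat{\psi}^\ell(\xi)\,d\xi$, and the two $\widehat{\psi}^\ell(1)$ terms cancel, leaving exactly $\int_0^1\widehat{\psi}^\ell(\xi)\,d\xi=(\Lambda\mathbf{1})_\ell$. This establishes $\Lambda\mathbf{1}=B\vecbeta$, and multiplying by $B^{-1}$ yields the claim.

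The only point that genuinely requires attention — and the place where the hypothesis $M\geq 1$ enters — is the exact reproduction of the identity function by the interpolation operator. Lagrange interpolation at the $M+1$ subtimenodes is exact on polynomials of degree at most $M$, so $\sum_m \xi^m\widehat{\psi}^m(\xi)=\xi$ holds precisely because $\xi\mapsto\xi$ has degree $1\leq M$; for $M=0$ this identity fails and the argument breaks down, consistently with the stated assumption. Everything else amounts to interchanging finite sums with integrals and one integration by parts, so I expect no substantive obstacle beyond correctly tracking the boundary contributions at $\xi=0,1$.
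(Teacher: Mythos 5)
Your proposal is correct and follows essentially the same route as the paper's proof: both reduce \eqref{eq:semirequiredforRK} to the matrix identity $\Lambda\uvec{1}=B\vecbeta$, collapse the left-hand side via the partition of unity, and handle the right-hand side by exact Lagrange reproduction of the linear function $\xi$ followed by one integration by parts. Your explicit remark on where the hypothesis $M\geq 1$ enters (exactness of interpolation for degree-one polynomials) matches the paper's observation that the interpolation of $\xi$ is exact with at least $2\leq M+1$ nodes.
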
	
The proof can be found in \ref{app:HORK_condition}.
Further, thanks to basic interpolation properties, it is easy to show that an \ADERRK~method with $M+1$ subtimenodes is at least of order $N=M+1$, as summarized in the next proposition.
\begin{proposition}\label{prop:ADERRK_Mp1}
	\ADERRK~with $M+1$ subtimenodes is at least of order $M+1$.
\end{proposition}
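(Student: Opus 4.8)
The plan is to lean entirely on the identification furnished by Theorem~\ref{th:l2isRK} and to verify that the coefficients of the associated \ADERRK~method satisfy enough of the classical Butcher simplifying assumptions to force order $M+1$. Recall that the method has $S=M+1$ stages, with $A=B^{-1}\Lambda$, $c_m=\xi^m$ and $b_m=w_m=\int_0^1\hpsi^m\,d\xi$. I would check the quadrature assumption $B(M+1)$, namely $\sum_{m=0}^M b_m c_m^{q-1}=\tfrac1q$ for $q=1,\dots,M+1$, together with the stage assumption $C(M)$, namely $\sum_{r=0}^M a_{s,r}c_r^{q-1}=\tfrac{c_s^{q}}{q}$ for every $s$ and every $q=1,\dots,M$, and then invoke the standard Runge--Kutta order theorem based on these simplifying conditions \cite{hairer1987solving}.

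The condition $B(M+1)$ is the ``basic interpolation property'' referred to in the statement: it is just the exactness of the interpolatory quadrature induced by the $M+1$ subtimenodes. Indeed, for $q\le M+1$ the monomial $\xi^{q-1}$ has degree $\le M$ and therefore coincides with its own Lagrange interpolant $\sum_m (\xi^m)^{q-1}\hpsi^m$, so that $\sum_m w_m (\xi^m)^{q-1}=\int_0^1 \xi^{q-1}\,d\xi=\tfrac1q$. This step is immediate.

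The condition $C(M)$ is the core computation and the main obstacle. Fixing $q\in\{1,\dots,M\}$, the claim $\sum_{r}a_{s,r}(\xi^r)^{q-1}=\tfrac{(\xi^s)^q}{q}$ for all $s$ is, since $A=B^{-1}\Lambda$, equivalent to $\sum_{r}\Lambda_{\ell,r}(\xi^r)^{q-1}=\sum_{m}B_{\ell,m}\tfrac{(\xi^m)^q}{q}$ for all $\ell$. For the left-hand side, exactness of interpolation on $\xi^{q-1}$ (degree $\le M$) gives $\sum_r\Lambda_{\ell,r}(\xi^r)^{q-1}=\int_0^1\hpsi^\ell(\xi)\,\xi^{q-1}\,d\xi$. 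For the right-hand side, since $\xi^q/q$ has degree $q\le M$ it equals its own interpolant, so by the definition of $B$ the sum collapses to $\tfrac1q\hpsi^\ell(1)-\int_0^1(\hpsi^\ell)'(\xi)\tfrac{\xi^q}{q}\,d\xi$; integrating by parts, with the boundary term at $\xi=0$ vanishing because $q\ge1$, this equals $\int_0^1\hpsi^\ell(\xi)\,\xi^{q-1}\,d\xi$, matching the left-hand side. I would stress that this identity is exactly Proposition~\ref{prop:HORK_condition} in the case $q=1$, and that the argument breaks down precisely at $q=M+1$, where $\xi^{M+1}/(M+1)$ no longer equals its degree-$M$ interpolant; this is consistent with the fact that the additional orders $2M$ and $2M+1$ require the special GLB and GLG nodes of Theorems~\ref{th:order_GLB} and~\ref{th:order_GLG}.

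Finally I would conclude with the classical order theorem under the simplifying assumptions $B(p)$, $C(\eta)$, $D(\zeta)$: having established $B(M+1)$, $C(M)$ and the vacuous $D(0)$, I take $p=M+1$, $\eta=M$ and $\zeta=0$, and note that $p=\eta+\zeta+1$ and $p\le 2\eta+2$ hold for $M\ge1$. The hypotheses of the theorem are therefore met, and the \ADERRK~method has order at least $M+1$.
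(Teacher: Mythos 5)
Your proof is correct, but it takes a genuinely different route from the paper's. The paper's own proof (\ref{app:ADERRK_Mp1}) is a direct local-truncation-error estimate: it inserts the exact solution into the \ADERIWF~\eqref{eq:weakproblemdiscrete}, uses the $O(\Delta t^{M+1})$ accuracy of Lagrange interpolation at the $M+1$ subtimenodes to conclude that the coefficients $\uvec{u}^m$ approximate the exact values $\uvec{u}(t^m)$ to $O(\Delta t^{M+1})$, and then shows that the final update, rewritten as the quadrature \eqref{eq:integration}, yields $\uvec{u}_{n+1}=\uvec{u}(t_{n+1})+O(\Delta t^{M+2})$. You instead verify Butcher simplifying assumptions on the coefficients furnished by Theorem~\ref{th:l2isRK}: $\Bp(M+1)$ from exactness of the interpolatory quadrature, $\Cp(M)$ via the matrix identity $\Lambda\,\underline{\uvec{c}^{q-1}}=\frac{1}{q}B\,\underline{\uvec{c}^{q}}$, and the vacuous $\Dp(0)$, and then apply Theorem~\ref{th:butcher_order} with $p=M+1$, $\eta=M$, $\zeta=0$ (legitimate, since $\Dp(0)$ is an empty set of conditions and $p\leq\eta+\zeta+1$ holds with equality). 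Two remarks: first, your $\Cp(M)$ computation is exactly the content of Lemma~\ref{lem:conditionsRKC_ADER} with $S=M+1$, whose hypothesis (quadrature with degree of exactness at least $2S-3$) is automatic under the section's standing convention of exact integration, so you could simply have cited that lemma and shortened the core of your argument to a few lines; second, your observation that the argument fails at $q=M+1$ matches the sharpness remark in that lemma. What your route buys is economy and coherence with the rest of Section~\ref{sec:analytical_results}: it reuses the machinery needed anyway for Theorems~\ref{th:order_GLB} and~\ref{th:order_GLG}, and makes transparent why generic subtimenodes stop at order $M+1$ while GLB/GLG nodes go further. What the paper's route buys is self-containedness (no appeal to Butcher's order theory beyond the RK identification) and the explicit intermediate information that the stage values themselves are $O(\Delta t^{M+1})$-accurate, a stage-order statement that your argument encodes only implicitly through $\Cp(M)$.
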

The proof can be found in \ref{app:ADERRK_Mp1}.
Therefore, a general distribution of subtimenodes, e.g., equispaced, yields accuracy $M+1$.
Nonetheless,  we will show that particular choices of subtimenodes lead to higher convergence rates. 
The order of accuracy of the \ADERRK~schemes can be verified with the help of some conditions on the coefficients $A$, $\uvec{b}$ and $\uvec{c}$. 
Let us, hence, define, for $p,\eta,\zeta \in \mathbb{N}$, the following conditions \cite{wanner1996solving}
\begin{subequations}
	\begin{align}
		\Bp(p):\qquad  & \sum_{i=0}^{S-1} b_i c_i^{z-1}=\frac1z,\qquad & z=1,\dots,p; \label{eq:condRKB}\\
		\Cp(\eta):\qquad  & \sum_{j=0}^{S-1} a_{i,j} c_j^{z-1}=\frac{c_i^z}{z},\qquad & i=0,\dots,S-1,\,z=1,\dots,\eta;\label{eq:condRKC}\\
		\Dp(\zeta):\qquad  & \sum_{i=0}^{S-1} b_i c_i^{z-1}a_{i,j}=\frac{b_j}{z}(1-c_j^z),\qquad &j=0,\dots,S-1,\, z=1,\dots,\zeta, 			\label{eq:condRKD}
	\end{align}
\end{subequations}
which allow to easily verify the order of accuracy of implicit RK schemes through the following theorem.
\begin{theorem}[Butcher 1964 \cite{butcher1964implicit}]\label{th:butcher_order}
	If the coefficients $a_{i,j},b_i,c_i$ of a RK scheme satisfy $\Bp(p)$, $\Cp(\eta)$ and $\Dp(\zeta)$ with $p\leq \eta +\zeta +1$ and $p\leq 2\eta +2$, then the method is of order $p$.
\end{theorem}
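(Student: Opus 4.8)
The plan is to prove this classical result through the rooted-tree theory of order conditions, following the argument originally due to Butcher. First I would recall the tree formalism: to each rooted tree $t$ one associates an order $\rho(t)$ (its number of vertices), a density $\gamma(t)$, and, for each stage index $i$, an elementary weight $g_i(t)$ defined recursively by $g_i(\bullet)=1$ for the single vertex and, for a tree $t=[t_1,\dots,t_m]$ whose root carries the subtrees $t_1,\dots,t_m$, by $g_i(t)=\prod_{k=1}^m\big(\sum_j a_{i,j}\,g_j(t_k)\big)$. The standard characterization states that the RK scheme \eqref{eq:RK_ACCURACY_NODAL} has order $p$ if and only if $\sum_{i=0}^{S-1}b_i\,g_i(t)=\frac{1}{\gamma(t)}$ for every rooted tree $t$ with $\rho(t)\leq p$. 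The goal is thus to show that $\Bp(p)$, $\Cp(\eta)$ and $\Dp(\zeta)$, together with the inequalities $p\leq\eta+\zeta+1$ and $p\leq 2\eta+2$, force all of these conditions.

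The second step is to read the three simplifying assumptions in tree language. Condition $\Cp(\eta)$ expresses that a chain (a ``tall'' subtree) $\tau_z$ of order $z\leq\eta$ hanging off a vertex can be collapsed: by induction on $z$ its internal weight equals $g_j(\tau_z)=\frac{c_j^{z-1}}{(z-1)!}$, whence $\sum_j a_{i,j}\,g_j(\tau_z)=\frac{c_i^{z}}{z!}$, so $\Cp(\eta)$ lets one replace any subtree of height at most $\eta$ by the corresponding power of $c$ at its attachment vertex. Condition $\Bp(p)$ is precisely the quadrature condition $\sum_i b_i c_i^{z-1}=\frac1z$ and closes the order condition for trees that have been fully reduced to a single power of $c$ at the root. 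Condition $\Dp(\zeta)$ is the adjoint relation that rewrites the factor $\sum_i b_i c_i^{z-1}a_{i,j}$ adjacent to the root in terms of $b_j$, which is exactly what is needed when a branch at the root is itself too tall to be pruned directly by $\Cp(\eta)$.

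The core of the argument is then an induction over the structure of the trees of order $\leq p$. For a given tree I would first use $\Cp(\eta)$ to prune every subtree of height at most $\eta$, reducing it to a power of $c$ at its attachment vertex; if the whole tree collapses this way, $\Bp(p)$ finishes. The remaining trees possess a branch at the root too tall to be pruned; on these I would apply $\Dp(\zeta)$ once to peel off the root factor, transferring the analysis to a smaller tree to which the inductive hypothesis applies. The bookkeeping of heights is exactly where the two inequalities enter: $p\leq\eta+\zeta+1$ guarantees that after one application of $\Dp(\zeta)$ the residual tall branch has order low enough to be pruned by $\Cp(\eta)$, while $p\leq 2\eta+2$ controls the case of a tree with two tall branches meeting at the root, ensuring both can be simultaneously collapsed.

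I expect the main obstacle to be precisely this combinatorial bookkeeping: one must verify that every rooted tree with at most $p$ vertices falls into a reducible case and that the height budget is never exceeded. The delicate configurations are trees with several long branches emanating from the root or from a vertex adjacent to it, where a single naive use of $\Cp(\eta)$ or $\Dp(\zeta)$ does not suffice and one must track how the two inequalities jointly bound the admissible shapes; checking that these are the only obstructions, and that the thresholds $\eta+\zeta+1$ and $2\eta+2$ are sharp, is the technical heart of the proof, the remaining algebra being the routine recursive evaluation of elementary weights. Since this is a well-documented classical theorem, I would ultimately defer the exhaustive tree-by-tree verification to the standard references \cite{butcher1964implicit,wanner1996solving} rather than reproduce it in full.
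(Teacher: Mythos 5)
First, a point of reference: the paper does not prove this statement at all. It is quoted as a classical result, attributed to \cite{butcher1964implicit} (see also \cite{wanner1996solving,hairer1987solving}), and is only \emph{used} to establish the accuracy of the \ADERRK~schemes in Theorems \ref{th:order_GLB} and \ref{th:order_GLG}. So there is no internal proof to compare against, and your proposal has to stand on its own as a reconstruction of the classical argument. Your overall route --- rooted-tree order conditions, reading $\Bp$, $\Cp$, $\Dp$ as a quadrature rule, a branch-collapsing rule and a root-removal rule, then an induction over trees --- is indeed the standard proof (it is essentially Theorem 7.4 of Section II.7 in the Hairer--N{\o}rsett--Wanner reference).

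However, as written the proposal has a genuine gap, concentrated exactly where you yourself locate ``the technical heart'': the bookkeeping of the two inequalities is misstated, and the part you defer to the references is the actual content of the theorem. Concretely, $p\leq 2\eta+2$ does not ensure that two tall branches meeting at a vertex ``can be simultaneously collapsed''; it \emph{excludes} that configuration, since after $\Cp(\eta)$ has collapsed every subtree of order (not height) at most $\eta$, a vertex carrying two surviving branches would force the tree to have order at least $2(\eta+1)+1=2\eta+3>p$. This exclusion is what makes the un-collapsed vertices form a single spine. Likewise, $p\leq\eta+\zeta+1$ does not make ``the residual tall branch prunable by $\Cp(\eta)$ after one use of $\Dp(\zeta)$''; it guarantees that the index $z$ with which $\Dp$ must be invoked at the root (namely $z$ equal to the tree order minus the order of the surviving branch, so $z\leq p-(\eta+1)$) satisfies $z\leq\zeta$, i.e., that $\Dp(\zeta)$ is applicable at all, at every peel of the spine. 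With the roles of the two hypotheses assigned as in your sketch, the induction you describe would not close: after one application of $\Dp$ you obtain order conditions for the stripped branch \emph{and} for the branch with $z$ extra leaves attached at its root, the latter having the \emph{same} order as the original tree, so the recursion is not a descent on order and must instead follow the spine structure. Repairing the proof means stating and proving the two reduction lemmas precisely (collapse of all subtrees of order $\leq\eta$ under $\Cp(\eta)$; root removal with index $z\leq\zeta$ under $\Dp(\zeta)$) and verifying that the two inequalities place every tree of order $\leq p$ in one of the two reducible cases --- which is the part your proposal leaves to the literature.
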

We recall that the condition $\mathcal{B}(p)$ is necessary to reach order $p$, while $\mathcal{C}(\eta)$ and $\mathcal{D}(\zeta)$ are only sufficient conditions \cite{hairer1987solving}. Typically, explicit methods do not fulfill them.

We are now going to prove two preliminary results concerning the \ADERRK~schemes, which will be later used to show their accuracy for GLB and GLG subtimenodes.

\begin{lemma}\label{lem:conditionsRKC_ADER}
	The \ADERRK~methods, with integral terms evaluated through a quadrature formula $\mathcal{Q}\left\lbrace \cdot \right\rbrace$ with degree of exactness at least $2S-3$, satisfy $\Cp(S-1)$.
\end{lemma}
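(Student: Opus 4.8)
The plan is to turn the collocation-type condition $\Cp(S-1)$ into a single algebraic identity between the two ADER matrices and then to verify it by nodal-interpolation and integration-by-parts arguments. Recall from Theorem~\ref{th:l2isRK} that $A=B^{-1}\Lambda$ and $\uvec{c}=\vecbeta$, so $c_i=\xi^i$. Since $A=B^{-1}\Lambda$ means $BA=\Lambda$, writing the condition $\sum_{j}a_{i,j}c_j^{z-1}=c_i^z/z$ componentwise and multiplying through by $B$ reduces the whole statement (for all $\ell$ and all admissible $z$) to proving
\[
	\sum_{m=0}^{M}\Lambda_{\ell,m}(\xi^m)^{z-1}=\frac{1}{z}\sum_{m=0}^{M}B_{\ell,m}(\xi^m)^{z},\qquad \ell=0,\dots,M,\ z=1,\dots,S-1 .
\]
For $z=1$ this is just $\Cp(1)$, already guaranteed by Proposition~\ref{prop:HORK_condition}, which serves as a consistency check; the content is the remaining $z=2,\dots,S-1$.

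For the left-hand side I would use that the $\hpsi^m$ form the nodal (Lagrange) basis on the $M+1$ subtimenodes, so for any $k\le M$ the combination $\sum_m (\xi^m)^k\,\hpsi^m(\xi)$ is the interpolant of $\xi^k$ and therefore equals $\xi^k$ identically. Substituting $\Lambda_{\ell,m}=\mathcal{Q}\{\hpsi^\ell\hpsi^m\}$ and using linearity of the quadrature, the left-hand sum collapses to $\mathcal{Q}\{\hpsi^\ell\,\xi^{z-1}\}$ (legitimate since $z-1\le M$). The integrand $\hpsi^\ell\,\xi^{z-1}$ has degree $M+(z-1)\le 2M-1=2S-3$, so the assumed exactness turns the quadrature into the exact integral $\int_0^1\hpsi^\ell(\xi)\,\xi^{z-1}\,d\xi$.

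For the right-hand side I apply the same interpolation-reproduction identity, now to $\xi^z$ (valid since $z\le M$), together with $B_{\ell,m}=\hpsi^\ell(1)\hpsi^m(1)-\mathcal{Q}\{(\hpsi^\ell)'\hpsi^m\}$. The boundary part gives $\hpsi^\ell(1)\sum_m(\xi^m)^z\hpsi^m(1)=\hpsi^\ell(1)$, since $\sum_m(\xi^m)^z\hpsi^m(\xi)=\xi^z$ equals $1$ at $\xi=1$, and the remaining part collapses to $-\mathcal{Q}\{(\hpsi^\ell)'\,\xi^z\}$. Here $(\hpsi^\ell)'\,\xi^z$ has degree $(M-1)+z\le 2S-3$, so the quadrature is again exact, and integration by parts yields $\int_0^1(\hpsi^\ell)'\xi^z\,d\xi=\hpsi^\ell(1)-z\int_0^1\hpsi^\ell\,\xi^{z-1}\,d\xi$, where the boundary term at $0$ vanishes because $z\ge1$. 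Collecting the two contributions gives $\frac1z\sum_m B_{\ell,m}(\xi^m)^z=\int_0^1\hpsi^\ell\,\xi^{z-1}\,d\xi$, matching the left-hand side and closing the argument.

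The computation itself is short; the delicate point I expect to guard most carefully is the degree bookkeeping, which shows that $2S-3$ is exactly — not merely conveniently — the threshold required: both the mass-type integrand $\hpsi^\ell\,\xi^{z-1}$ and the stiffness-type integrand $(\hpsi^\ell)'\,\xi^z$ reach degree $2S-3$ precisely at $z=S-1$, so the hypothesis cannot be relaxed within this proof. Two consistency conditions must also be respected throughout: the interpolation-reproduction identity may be invoked only for exponents $\le M$ (here $z-1$ and $z$, both $\le M$ since $z\le S-1=M$), and the quadrature $\mathcal{Q}$ must be kept in place in both $B$ and $\Lambda$ until its exactness is actually justified by the degree count.
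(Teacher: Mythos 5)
Your proof is correct and follows essentially the same route as the paper's: both reduce $\Cp(S-1)$ via $A=B^{-1}\Lambda$ to the matrix identity $\Lambda\,\underline{\uvec{c}^{z-1}}=\frac{1}{z}B\,\underline{\uvec{c}^{z}}$, collapse both sides using exact Lagrange reproduction of $\xi^{z-1}$ and $\xi^{z}$, invoke the degree-$(2S-3)$ exactness of $\mathcal{Q}$, and finish with integration by parts to identify both sides with $\int_0^1\widehat{\psi}^\ell(\xi)\,\xi^{z-1}\,d\xi$. Your explicit degree bookkeeping (both integrands reach degree $2S-3$ exactly at $z=S-1$) matches, and indeed slightly sharpens, the paper's sharpness remark.
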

\begin{proof}
	Condition $\Cp(S-1)$ can be verified by explicit computations in matricial form. Indeed, since $A=B^{-1}\Lambda$, then $\Cp(S-1)$ can be rewritten as
	\begin{subequations}
		
		\begin{equation}
			A\, \underline{\uvec{c}^{z-1}}  = \frac{1}{z}\underline{\uvec{c}^{z}} \Longleftrightarrow \Lambda \, \underline{\uvec{c}^{z-1}}= \frac{1}{z} B\,\underline{\uvec{c}^z}, \qquad z=1,\dots, S-1,
		\end{equation}
		where $\underline{\uvec{c}^{\alpha}}$ denotes the vector $\uvec{c}$ with each entry to the power of $\alpha$.
		
		Let us compute the general $\ell$-th component of both sides of the last equality.
		The right-hand side is
		\begin{align}
			\begin{split}
				\frac{1}{z} \left(B\,\underline{\uvec{c}^z}\right)_\ell&=\frac{1}{z}\sum_{m=0}^M\Bigg[ \widehat{\psi}^\ell(1)\widehat{\psi}^m(1)- \int_{0}^{1} \left(\frac{d}{d\xi}\widehat{\psi}^\ell(\xi)\right)\widehat{\psi}^m(\xi)d\xi \Bigg] \left( \xi^m \right)^z\\
				&=\frac{1}{z}\left\lbrace\widehat{\psi}^\ell(1)\Bigg[ \sum_{m=0}^M \widehat{\psi}^m(1)\left( \xi^m \right)^z \Bigg] - \int_{0}^{1} \left(\frac{d}{d\xi}\widehat{\psi}^\ell(\xi)\right)\Bigg[ \sum_{m=0}^M \widehat{\psi}^m(\xi) \left( \xi^m \right)^z \Bigg] d\xi\right\rbrace. 
			\end{split}
		\end{align}
		Since $z$ is at most $S-1$ and the Lagrange basis functions $\widehat{\psi}^m$ associated to $M+1=S$ points allow to exactly interpolate polynomials up to degree $M=S-1$, we have that the terms in square brackets are nothing but the exact interpolation of the function $\xi^z$.  Therefore, due to the exactness of the quadrature formula for polynomials of degree $2S-3$, we can integrate by parts and obtain
		\begin{align}
			\begin{split}
				\frac{1}{z} \left(B\,\underline{\uvec{c}^z}\right)_\ell&=\frac{1}{z}\left\lbrace   \widehat{\psi}^\ell(1)\cdot 1^z- \int_{0}^{1}  \left(  \frac{d}{d\xi}\widehat{\psi}^\ell(\xi) \right)\xi^z d\xi\right\rbrace\\
				&=\frac{1}{z}\left\lbrace  \widehat{\psi}^\ell(1)\cdot 1^z - \widehat{\psi}^\ell(1)\cdot 1^z+\widehat{\psi}^\ell(0)\cdot 0^z + z \int_{0}^{1}   \widehat{\psi}^\ell(\xi)\xi^{z-1}d\xi\right\rbrace=\int_{0}^{1}  \widehat{\psi}^\ell(\xi) \xi^{z-1}d\xi.\label{eq:Bcz}
			\end{split}
		\end{align}
		The left-hand side is slightly more delicate to handle, as the adopted quadrature formula does not allow to compute exactly the terms of the matrix $\Lambda$. We have
		\begin{align}
			\left(\Lambda \, \underline{\uvec{c}^{z-1}}\right)_{\ell}&=\sum_{m=0}^M\mathcal{Q}\left\lbrace  \widehat{\psi}^\ell(\xi)\widehat{\psi}^m(\xi) \right\rbrace\left( \xi^m \right)^{z-1}  =\mathcal{Q}\left\lbrace  \widehat{\psi}^\ell(\xi)\left[ \sum_{m=0}^M\widehat{\psi}^m(\xi)\left( \xi^m \right)^{z-1} \right]\right\rbrace \label{eq:quad_linear}\\
			&=\mathcal{Q}\left\lbrace \widehat{\psi}^\ell(\xi)\xi^{z-1}\right\rbrace=\int_{0}^{1}   \widehat{\psi}^\ell(\xi)\xi^{z-1}d\xi=\frac{1}{z} \left(B\,\underline{\uvec{c}^z}\right)_\ell, \label{eq:use_acc_quad}
		\end{align}
		where in \eqref{eq:quad_linear} we have used the linearity of the quadrature operator and in  \eqref{eq:use_acc_quad} the fact that the subtimenodes exactly interpolate polynomials up to degree $M=S-1$ and \eqref{eq:Bcz}.
		
		Note that the condition $\Cp(S-1)$ is sharp. In fact, since the interpolation is not anymore exact for $z=S$, then $\Cp(S)$ is not satisfied.
	\end{subequations}
\end{proof}
The condition of the previous lemma is therefore satisfied by \ADERRK~methods with GLB and GLG subtimenodes if the same subtimenodes are adopted as quadrature points, since they induce quadrature formulas respectively characterized by degree of exactness equal to $2S-3$ and $2S-1$. 
We refer to such schemes, characterized by having the subtimenodes as quadrature points, as \ADERRK-GLB and \ADERRK-GLG.
They satisfy another important property stated in the next lemma.

\begin{lemma}\label{lem:conditionsRKD_ADER}
	The \ADERRK-GLB and \ADERRK-GLG methods satisfy $\Dp(S-1)$.
\end{lemma}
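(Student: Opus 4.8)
The plan is to exploit the very special structure that the node-based quadrature confers on $\Lambda$ and to mirror the matrix manipulation used in the proof of Lemma~\ref{lem:conditionsRKC_ADER}. The crucial preliminary observation is that, for the \ADERRK-GLB and \ADERRK-GLG schemes, the integrals defining $\Lambda$ are evaluated with the quadrature $\mathcal{Q}\{\cdot\}=\sum_{k=0}^{M}w_k\,(\cdot)(\xi^k)$ whose nodes are exactly the subtimenodes. Since $\widehat{\psi}^\ell(\xi^k)=\delta_{\ell k}$, this forces $\Lambda$ to be diagonal, $\Lambda_{\ell,m}=\mathcal{Q}\{\widehat{\psi}^\ell\widehat{\psi}^m\}=w_\ell\,\delta_{\ell m}=b_\ell\,\delta_{\ell m}$. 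Using $A=B^{-1}\Lambda$, the left-hand side of $\Dp(z)$ then splits as
\begin{equation*}
	\sum_{i=0}^{S-1}b_i c_i^{z-1}a_{i,j}=\sum_{k=0}^{M}\left(\sum_{i=0}^{S-1}b_i c_i^{z-1}(B^{-1})_{i,k}\right)b_k\,\delta_{k,j},
\end{equation*}
so the whole statement reduces to proving the single vector identity $\sum_{i}b_i c_i^{z-1}(B^{-1})_{i,k}=\frac1z(1-c_k^z)$ for $z=1,\dots,S-1$ and all $k$.

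Next I would recast this identity, after multiplying on the right by $B$, as $B^{T}\underline{q}_z=\underline{b}_z$, where $\underline{q}_z$ has components $\frac1z(1-c_\ell^z)$ and $\underline{b}_z$ has components $b_m c_m^{z-1}$. To compute $(B^{T}\underline{q}_z)_m=\sum_\ell B_{\ell,m}(\underline{q}_z)_\ell$, I would introduce the interpolating polynomial $Q_z(\xi):=\sum_\ell (\underline{q}_z)_\ell\widehat{\psi}^\ell(\xi)$ and use, exactly as in Lemma~\ref{lem:conditionsRKC_ADER}, that the nodal values $\frac1z(1-(\xi^\ell)^z)$ are the samples of the degree-$z$ polynomial $\frac1z(1-\xi^z)$; since $z\le S-1=M$, the interpolation is exact, so $Q_z(\xi)=\frac1z(1-\xi^z)$, whence $Q_z(1)=0$ and $Q_z'(\xi)=-\xi^{z-1}$. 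Substituting the definition of $B_{\ell,m}$ and factoring the two sums over $\ell$ through $Q_z$ collapses the boundary term and leaves $(B^{T}\underline{q}_z)_m=\int_0^1\xi^{z-1}\widehat{\psi}^m(\xi)\,d\xi$.

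The final step is to recognize that the integrand $\xi^{z-1}\widehat{\psi}^m(\xi)$ is a polynomial of degree $(z-1)+M=z+S-2$, which for $z\le S-1$ is at most $2S-3$; hence it is integrated exactly by both the GLB quadrature (degree of exactness $2S-3$) and the GLG quadrature (degree $2S-1$). Replacing the integral by $\mathcal{Q}$ and using $\widehat{\psi}^m(\xi^k)=\delta_{mk}$ gives $\int_0^1\xi^{z-1}\widehat{\psi}^m\,d\xi=b_m c_m^{z-1}=(\underline{b}_z)_m$, which closes $B^{T}\underline{q}_z=\underline{b}_z$, and back-substituting into the first display makes the $b_k\,\delta_{k,j}$ factor reproduce the right-hand side $\frac{b_j}{z}(1-c_j^z)$, establishing $\Dp(S-1)$.

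I expect the only delicate point to be the degree count in this last step: for GLB the bound $z+S-2\le 2S-3$ is saturated precisely at $z=S-1$, so the argument is sharp and cannot be pushed to $\Dp(S)$ (paralleling the sharpness of $\Cp(S-1)$ noted after Lemma~\ref{lem:conditionsRKC_ADER}). I would also verify carefully that the induced quadrature weights coincide with the RK weights $b_m=w_m$, which holds because the GLB/GLG formulas are interpolatory; this is exactly what makes $\Lambda$ diagonal and lets the construction close without any positivity assumption on the weights.
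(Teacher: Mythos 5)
Your proposal is correct and follows essentially the same route as the paper's proof: you use the diagonality of $\Lambda$ (from subtimenodes coinciding with quadrature points) to reduce $\Dp(S-1)$ to the vector identity $\underline{\uvec{b}\uvec{c}^{z-1}} = \frac{1}{z}B^T\,\underline{\uvec{1}-\uvec{c}^z}$, then exploit exact Lagrange interpolation of the degree-$z$ polynomial $1-\xi^z$ for $z\leq S-1$ and the exactness of the GLB/GLG quadrature on $\xi^{z-1}\widehat{\psi}^m(\xi)$ (degree at most $2S-3$) to close the argument. The only differences are cosmetic — you work componentwise with $(B^{-1})_{i,k}$ where the paper manipulates $\Lambda^{-1}B$ in matrix form, and you make the final degree count and the sharpness at $z=S-1$ explicit.
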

\begin{proof}
	\begin{subequations}
		Let us observe that, for both \ADERRK-GLB and \ADERRK-GLG methods, the subtimenodes and the quadrature points coincide. Hence, in these particular cases, we have a diagonal matrix $\Lambda$. In fact, its general entry is
		\begin{equation}
			\Lambda_{\ell,m}:=\mathcal{Q}\left\lbrace \widehat{\psi}^\ell (\xi) \widehat{\psi}^m (\xi) \right\rbrace= w_\ell \delta_{\ell,m}, 
		\end{equation}
		where $w_\ell=\int_0^1 \widehat{\psi}^\ell d\xi=b_\ell$ is the quadrature weight associated to $\xi^\ell$. This fact will be useful in the following.

		We write explicitly condition $\Dp(S-1)$ in matricial form, for all $z=1,\dots, S-1$
		\begin{align}
			A^T \underline{\uvec{b}\uvec{c}^{z-1}}= \frac{1}{z} \underline{\uvec{b}(\uvec{1}-\uvec{c}^z)} \Longleftrightarrow \left[\underline{\uvec{b}\uvec{c}^{z-1}}\right]^T A= \frac{1}{z} \left[\underline{\uvec{b}(\uvec{1}-\uvec{c}^z)}\right]^T \Longleftrightarrow\, 
			\left[\underline{\uvec{b}\uvec{c}^{z-1}}\right]^T = \frac{1}{z} \left[\underline{\uvec{b}(\uvec{1}-\uvec{c}^z)}\right]^T \Lambda^{-1}B,
		\end{align}
		where the general $i$-th entries of the vectors	$\underline{\uvec{b}\uvec{c}^{z-1}}$ and $\underline{\uvec{b}(\uvec{1}-\uvec{c}^z)}$ are respectively $b_ic_i^{z-1}$ and $b_i(1-c_i^z)$.
		
		Recalling that $\Lambda_{i,j}=w_i \delta_{ij}=b_i\delta_{ij}$, the following expression holds $\left[\underline{\uvec{b}(\uvec{1}-\uvec{c}^z)}\right]^T\Lambda^{-1}=\underline{(\uvec{1}-\uvec{c}^z)}^T$, with general $i$-th entry equal to $1-c_i^z$. 
		Hence, it is left to prove that $\underline{\uvec{b}\uvec{c}^{z-1}} = \frac{1}{z}B^T \underline{\uvec{1}-\uvec{c}^z}.$
		
		Expanding the $\ell$-th component of the right-hand side, we get
		\begin{align}
			\begin{split}
				\frac{1}{z}\left( B^T \underline{\uvec{1}-\uvec{c}^z}\right)_\ell&=\frac{1}{z}\sum_{m=0}^M\Bigg[ \widehat{\psi}^m(1)\widehat{\psi}^\ell(1)- \int_{0}^{1} \left(\frac{d}{d\xi}\widehat{\psi}^m(\xi)\right)\widehat{\psi}^\ell(\xi)d\xi \Bigg]\left[1- \left( \xi^m \right)^z\right]\\
				&=\frac{1}{z}\left\lbrace \left(\sum_{m=0}^M\widehat{\psi}^m(1)\left[1- \left( \xi^m \right)^z\right]\right)\widehat{\psi}^\ell(1)- \int_{0}^{1} \left(\sum_{m=0}^M\frac{d}{d\xi}\widehat{\psi}^m(\xi)\left[1- \left( \xi^m \right)^z\right]\right)\widehat{\psi}^\ell(\xi) d\xi \right\rbrace.
			\end{split}
		\end{align}
		Since $(1-\xi^z)$ is a polynomial of degree at most $S-1$, its Lagrange interpolation is exact, hence
		\begin{align}
			\begin{split}
				\frac{1}{z}\left( B^T \underline{\uvec{1}-\uvec{c}^z}\right)_\ell&=\frac{1}{z}\left\lbrace \left(1-  1^z\right)\cdot \widehat{\psi}^\ell(1) - \int_{0}^{1} \left[\frac{d}{d\xi}\left(1- \xi ^z\right)\right]\widehat{\psi}^\ell(\xi) d\xi \right\rbrace\\
				&=\int_0^1  \widehat{\psi}^\ell(\xi)\xi^{z-1} d \xi=w_\ell \cdot \left(\xi^\ell \right)^{z-1}=\left(\underline{\uvec{b}\uvec{c}^{z-1}}\right)_\ell.
			\end{split}
		\end{align}
		In the last step, we have exploited the exactness of the quadrature rule  in the GLB or GLG subtimenodes for the considered integral.
		Hence, condition $\Dp(S-1)$ holds. Note that also in this case the condition is sharp: the exact interpolation of polynomials of degree $S-1$, guaranteed for $z\leq S-1$, was necessary and therefore $\Dp(S)$ is not satisfied.
	\end{subequations}
\end{proof}

\begin{remark}[\ADERRK-GLG is not a collocation method]
	From the proof of Lemma \ref{lem:conditionsRKC_ADER}, we can observe that \ADERRK-GLG methods do not satisfy $\Cp(S)$, despite having all the $c_i$ coefficients distinct, hence, the methods are not collocation methods and they do not coincide with Gauss methods  \cite{hairer1987solving}.
\end{remark}
To finally obtain the accuracy of \ADERRK-GLB and \ADERRK-GLG, we use Theorem \ref{th:butcher_order}. 
\begin{theorem}\label{th:order_GLB}
	\ADERRK-GLB is of order $2S-2$.
\end{theorem}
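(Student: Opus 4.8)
The plan is to assemble the three Butcher order conditions and feed them into Theorem~\ref{th:butcher_order}. Two of the three ingredients are already available: Lemma~\ref{lem:conditionsRKC_ADER} gives $\Cp(S-1)$ (the $S$-point GLB rule has degree of exactness $2S-3$, which is exactly the hypothesis of that lemma), and Lemma~\ref{lem:conditionsRKD_ADER} gives $\Dp(S-1)$. The only missing piece is the simplifying condition $\Bp(p)$ for the largest admissible $p$, so this is where the real work lies.

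First I would establish $\Bp(2S-2)$. Recalling that $b_m:=w_m$ are the GLB quadrature weights and $c_m:=\xi^m$ are the GLB subtimenodes, the left-hand side $\sum_{i=0}^{S-1} b_i c_i^{z-1}$ of condition \eqref{eq:condRKB} is nothing but the GLB quadrature applied to the monomial $\xi^{z-1}$. Since the $S$-point GLB rule has degree of exactness $2S-3$, it reproduces $\int_0^1 \xi^{z-1}\,d\xi = \tfrac1z$ exactly whenever $z-1\leq 2S-3$, i.e. for all $z\leq 2S-2$. Hence $\Bp(2S-2)$ holds.

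Then I would apply Theorem~\ref{th:butcher_order} with $p=2S-2$, $\eta=S-1$ and $\zeta=S-1$, checking the two required inequalities: $p=2S-2\leq \eta+\zeta+1 = 2S-1$ and $p=2S-2\leq 2\eta+2 = 2S$, both of which are satisfied. This yields that the method has order at least $2S-2$. For sharpness I would then invoke that $\Bp(p)$ is \emph{necessary} for order $p$ (as recalled after Theorem~\ref{th:butcher_order}): for $z=2S-1$ the relevant monomial $\xi^{2S-2}$ has degree $2S-2$, which exceeds the exactness degree $2S-3$ of the GLB rule, so $\sum_i b_i c_i^{2S-2}\neq \tfrac{1}{2S-1}$ in general and $\Bp(2S-1)$ fails. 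Consequently the order cannot exceed $2S-2$, and the order is exactly $2S-2$.

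The main obstacle is really just the careful bookkeeping of the degree of exactness of the $S$-point GLB quadrature and its interplay with the three Butcher conditions: one must confirm that $2S-3$ simultaneously certifies $\Cp(S-1)$ (via Lemma~\ref{lem:conditionsRKC_ADER}) and the sharp cutoff $\Bp(2S-2)$, and that these combine correctly in the inequalities $p\leq \eta+\zeta+1$ and $p\leq 2\eta+2$. Once this alignment is verified, the conclusion follows mechanically from the lemmas and Butcher's theorem.
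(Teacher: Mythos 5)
Your proposal is correct and follows essentially the same route as the paper's proof: $\Bp(2S-2)$ from the degree of exactness $2S-3$ of the GLB rule, $\Cp(S-1)$ and $\Dp(S-1)$ from Lemmas~\ref{lem:conditionsRKC_ADER} and~\ref{lem:conditionsRKD_ADER}, Theorem~\ref{th:butcher_order} with $p=2S-2$, $\eta=\zeta=S-1$, and sharpness via the failure of $\Bp(2S-1)$ together with the necessity of $\Bp(p)$ for order $p$. Your explicit verification of the inequalities $p\leq\eta+\zeta+1$ and $p\leq 2\eta+2$, and the observation that $\sum_i b_i c_i^{2S-2}\neq\tfrac{1}{2S-1}$ (which in fact holds always, not just ``in general'', since exactness on $\xi^{2S-2}$ would force exactness on all of degree $2S-2$), only make the same argument slightly more explicit.
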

\begin{proof}
	The condition $\Bp(2S-2)$ holds, because the vectors $\uvec{c}$ and $\uvec{b}$ are the quadrature points and weights of the GLB quadrature formula characterized by degree of exactness $2S-3$.
	Lemmas \ref{lem:conditionsRKC_ADER} and \ref{lem:conditionsRKD_ADER} prove that \ADERRK-GLB satisfies $\Cp(S-1)$ and $\Dp(S-1)$, so Theorem \ref{th:butcher_order} is satisfied for order $p=2S-2$ and $\eta = \zeta=S-1$. Moreover, since the quadrature is of order exactly $2S-2$, $\Bp(2S-1)$ does not hold and, hence, the method is not of order $2S-1$. 
	This observation is based on the theory of order conditions of RK methods involving trees presented in \cite{hairer1987solving}. If $\Bp(p)$ is not satisfied, not all the order conditions of \cite[Theorem 2.13]{hairer1987solving} hold for all trees of order $\leq p$ and the method cannot be of order $p$.
\end{proof}

In particular, the \ADERRK-GLB methods coincide with Lobatto IIIC RK methods \cite{wanner1996solving} and we prove it in \ref{app:GLB_Lobatto_IIIC}.

\begin{theorem}\label{th:order_GLG}
	\ADERRK-GLG is of order $2S-1$.
\end{theorem}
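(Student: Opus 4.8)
The plan is to follow the same two-part strategy as in the proof of Theorem~\ref{th:order_GLB}: first establish that the order is at least $2S-1$ by invoking Butcher's simplifying conditions, and then argue sharpness, i.e.\ that the order is not $2S$. The lower bound is the routine part. I would apply Theorem~\ref{th:butcher_order} with $p=2S-1$ and $\eta=\zeta=S-1$. Lemmas~\ref{lem:conditionsRKC_ADER} and~\ref{lem:conditionsRKD_ADER} already supply $\Cp(S-1)$ and $\Dp(S-1)$ for \ADERRK-GLG, so the only remaining ingredient is $\Bp(2S-1)$. This is immediate from the fact that $\uvec{c}$ and $\uvec{b}$ are the nodes and weights of the $S$-point GLG quadrature, whose degree of exactness is $2S-1$: exactness of $\int_0^1 \xi^{z-1}\,d\xi = 1/z$ for degrees $z-1\le 2S-1$ is precisely the statement $\sum_{i=0}^{S-1} b_i c_i^{z-1}=\tfrac1z$ for $z=1,\dots,2S$, that is $\Bp(2S)$, which of course contains $\Bp(2S-1)$.

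It then remains to verify the two inequalities in Theorem~\ref{th:butcher_order} with these parameters. With $p=2S-1$ and $\eta=\zeta=S-1$ one has $\eta+\zeta+1=2S-1=p$ and $2\eta+2=2S\ge p$, so both hold, the first with equality. Hence \ADERRK-GLG has order at least $2S-1$. Since it is classical that an $S$-stage RK method has order at most $2S$, the order is therefore either $2S-1$ or $2S$, and the whole difficulty reduces to excluding the value $2S$.

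The sharpness is the main obstacle, because the mechanism used in Theorem~\ref{th:order_GLB} no longer applies: there, order $2S-1$ was ruled out because $\Bp(2S-1)$ failed for the GLB quadrature, whereas here the GLG quadrature is exact to degree $2S-1$, so $\Bp(2S)$ genuinely holds and cannot obstruct order $2S$. Instead I would exploit the maximality of the Gauss method, invoking the classical fact that the unique $S$-stage RK method attaining the maximal order $2S$ is the $S$-point Gauss collocation method. It thus suffices to show that \ADERRK-GLG is \emph{not} that method. This is exactly the content of the preceding remark: although the weights $\uvec{b}$ and nodes $\uvec{c}$ of \ADERRK-GLG coincide with the Gauss weights and nodes, its coefficient matrix $A=B^{-1}\Lambda$ fails to satisfy $\Cp(S)$ (as observed at the end of the proof of Lemma~\ref{lem:conditionsRKC_ADER}), whereas the Gauss collocation matrix does satisfy $\Cp(S)$. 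Consequently \ADERRK-GLG differs from the $S$-stage Gauss method and cannot have order $2S$; combined with the lower bound, its order is exactly $2S-1$. An alternative, more self-contained route to the same conclusion would be to exhibit a single order-$2S$ rooted-tree condition that fails, built from the fact that $\bigl(A\,\underline{\uvec{c}^{\,S-1}}\bigr)_i\neq \tfrac1S c_i^{S}$; I would fall back on this only if the uniqueness statement for Gauss methods is deemed too heavy to cite.
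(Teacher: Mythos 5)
Your proof is correct and follows essentially the same strategy as the paper's: the lower bound is obtained identically, by combining $\Bp(2S-1)$ (from the degree of exactness $2S-1$ of the GLG quadrature) with $\Cp(S-1)$ and $\Dp(S-1)$ from Lemmas~\ref{lem:conditionsRKC_ADER} and~\ref{lem:conditionsRKD_ADER}, and applying Theorem~\ref{th:butcher_order} with $p=2S-1$, $\eta=\zeta=S-1$; sharpness likewise hinges on the failure of $\Cp(S)$ established in Lemma~\ref{lem:conditionsRKC_ADER}. The only cosmetic difference is in the citation used for sharpness: you route through the classical uniqueness of the $S$-stage Gauss collocation method as the only order-$2S$ method (which satisfies $\Cp(S)$, unlike \ADERRK-GLG), whereas the paper directly invokes Butcher's result that any $S$-stage method of order $2S$ must satisfy $\Cp(S)$ --- both arguments reduce to the same contradiction.
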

\begin{proof}
	\ADERRK-GLG satisfies $\Bp(2S)$, because the vectors $\uvec{c}$ and $\uvec{b}$ are the quadrature points and weights of the GLG quadrature formula characterized by degree of exactness $2S-1$. Hence, also $\Bp(2S-1)$ holds. For Lemmas \ref{lem:conditionsRKC_ADER} and \ref{lem:conditionsRKD_ADER}, it also satisfies $\Cp(S-1)$ and $\Dp(S-1)$. Hence, Theorem \ref{th:butcher_order} guarantees that the method is of order $2S-1$, since it is satisfied with $p=2S-1$ and $\eta=\zeta=S-1$.
	Also here, it is possible to prove that \ADERRK-GLG is not of order $2S$, but sharply of order $2S-1$. Indeed, a method of order $2S$ must verify $\Cp(S)$, see \cite[Theorem 342C]{butcher2016numerical}. So, by contradiction, it must be that \ADERRK-GLG is not of order $2S$.
\end{proof}

\subsection{Beyond (and within) nodal bases}\label{app:beyond_nodal}
In this subsection, we prove an interesting result concerning the equivalence between ADER schemes with arbitrary bases and ADER schemes with Lagrangian basis functions defined in the quadrature points of the former schemes. 
Let us start by introducing the ADER formulation for an arbitrary basis $\left\lbrace\phi^{m}\right\rbrace_{m=0,\dots,M}$ of the space of polynomials with degree $M$ over $[t_n,t_{n+1}]$. Again, moving from the weak formulation of the ODEs system \eqref{eq:ODE} and projecting it onto a finite dimensional functional space, we get the nonlinear system
\begin{align}
	\begin{split}
		\sum_{m=0}^M \Bigg[ \phi^{\ell}(t_{n+1})\phi^m(t_{n+1}) & -\int_{t_n}^{t_{n+1}} \left(\frac{d}{dt}\phi^\ell(t)\right)\phi^m(t)dt   \Bigg]\uvec{\co}^{m}-\phi^\ell(t_n)\uvec{u}_n \\
		&-  \int_{t_n}^{t_{n+1}} \phi^\ell(t)\uvec{G}_h(t) dt =\uvec{0}, \quad \ell=0,\dots,M,
	\end{split}
	\label{eq:weakproblemdiscrete_arbitrary}
\end{align}
where $\uvec{G}_h(t):=\uvec{G}(t,\uvec{u}_h(t))$ with the reconstruction $\uvec{u}_h(t):=\sum_{m=0}^M \uvec{\co}^m \phi^m(t)$. The unknown coefficients $\uvec{\co}^m$ are general representation coefficients of the numerical solution in the polynomial space spanned by the basis functions, not anymore nodal values. In literature, one can find different choices of basis functions alternative to nodal ones, for example modal Taylor basis functions \cite{Busto2020,boscheri2019high,ArepoTN,DOOM}.

In order to get a fully discrete version, we need to specify the quadrature formula that we are going to use in the integral terms: $\lbrace(\xi^q,w_q)\rbrace_{q=0,\dots,M}$ with $\xi^q \in [0,1]$ nodes and $w_q = \int_0^1 \widehat\psi^q(\xi)d \xi$ weights of the quadrature, where $\widehat\psi^q$ are the Lagrangian basis functions associated to the quadrature points $\xi^q$. 
Then, \eqref{eq:weakproblemdiscrete_arbitrary} reads
\begin{align}
	\begin{split}
		\sum_{m=0}^M \Bigg[ \widehat\phi^{\ell}(1)\widehat\phi^m(1) & -\sum_{q=0}^M \left(\frac{d}{d\xi} \widehat\phi^\ell(\xi^q)\right)\widehat\phi^m(\xi^q) w_q   \Bigg]\uvec{\co}^{m}-\widehat\phi^\ell(0)\uvec{u}_n \\
		&-  \dt \sum_{q=0}^M \widehat\phi^\ell(\xi^q)\uvec{G}\left(\xi^q, \sum_{m=0}^M \uvec{\co}^m \widehat\phi^m(\xi^q)\right) w_q =\uvec{0}, \quad \ell=0,\dots,M,
	\end{split}
	\label{eq:weakproblem_fully_discrete_arbitrary}
\end{align}
which is the \ADERIWF~of the method defined by the basis $\left\lbrace\phi^{m}\right\rbrace_{m=0,\dots,M}$ and the quadrature $\lbrace(\xi^q,w_q)\rbrace_{q=0,\dots,M}$. 
The system can be solved iteratively as in \eqref{eq:ADER_Picard} to obtain $\uvec{u}_{n+1}:=\uvec{u}_h(t_{n+1})$. Again, we denote with $\widehat{\left(\cdot\right)}$ the quantities rescaled onto $[0,1]$.

\begin{theorem}[Link between ADER schemes with arbitrary and nodal bases]\label{th:link}
	Consider a basis $\left\lbrace\phi^{m}\right\rbrace_{m=0,\dots,M}$ of 
	the space of the polynomials of degree $M$ over $[t_n,t_{n+1}]$ and let us denote by $\lbrace(\xi^q,w_q)\rbrace_{q=0,\dots,M}$  the $(M+1)$-points GLB or GLG quadrature rule, which we will denote by GL*, and by $\left\lbrace\psi^{m}\right\rbrace_{m=0,\dots,M}$ the respective Lagrange polynomials. Then, the \ADERIWF~\eqref{eq:weakproblem_fully_discrete_arbitrary} defined by $\left\lbrace\phi^{m}\right\rbrace_{m=0,\dots,M}$ and by the quadrature $\lbrace(\xi^q,w_q)\rbrace_{q=0,\dots,M}$  is equivalent to the \ADERIWF~\eqref{eq:weakproblemdiscrete} defined by $\left\lbrace\psi^{m}\right\rbrace_{m=0,\dots,M}$ with integrals computed through the same quadrature. 
\end{theorem}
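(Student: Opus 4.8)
The plan is to exhibit an explicit, invertible change of variables between the coefficient vector $\underline{\uvec{\co}}=(\uvec{\co}^0,\dots,\uvec{\co}^M)^T$ of the arbitrary-basis formulation and the nodal vector $\underline{\uvec{u}}=(\uvec{u}^0,\dots,\uvec{u}^M)^T$ of the Lagrangian formulation, and then to show that under this change of variables the two nonlinear systems coincide up to left-multiplication by an invertible matrix. First I would introduce the generalized Vandermonde matrix $V$ with entries $V_{q,m}:=\widehat{\phi}^m(\xi^q)$, the evaluation of the arbitrary basis at the GL* quadrature nodes. Since the $\widehat{\phi}^m$ are $M+1$ linearly independent polynomials of degree $M$ and the $\xi^q$ are $M+1$ distinct points, $V$ is invertible: if $V\underline{a}=\underline{0}$, then $\sum_m a_m\widehat{\phi}^m$ is a polynomial of degree at most $M$ vanishing at $M+1$ distinct nodes, hence identically zero, which forces $\underline{a}=\underline{0}$ because the $\widehat{\phi}^m$ are a basis. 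The matrix $V$ realizes the change of basis $\widehat{\phi}^m=\sum_{k=0}^M V_{k,m}\,\widehat{\psi}^k$ (both sides are degree-$M$ polynomials agreeing at every node $\xi^q$, using $\widehat{\psi}^k(\xi^q)=\delta_{kq}$), and, for the numerical solution $\uvec{u}_h=\sum_m\uvec{\co}^m\widehat{\phi}^m=\sum_k\uvec{u}^k\widehat{\psi}^k$, it gives $\uvec{u}^q=\uvec{u}_h(\xi^q)=\sum_m V_{q,m}\uvec{\co}^m$, that is $\underline{\uvec{u}}=V\underline{\uvec{\co}}$.

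The key observation is that, once the same quadrature $\lbrace(\xi^q,w_q)\rbrace$ is used in both formulations, every equation of either system is the value of one and the same discrete weak residual, which is linear in the test function. Concretely, for a test polynomial $\chi$ and the fixed polynomial $\uvec{u}_h$ I would define
\begin{equation*}
R[\chi]:=\chi(1)\,\uvec{u}_h(1)-\chi(0)\,\uvec{u}_n-\sum_{q=0}^M\Big(\tfrac{d}{d\xi}\chi(\xi^q)\Big)\uvec{u}_h(\xi^q)\,w_q-\dt\sum_{q=0}^M\chi(\xi^q)\,\uvec{G}\big(\xi^q,\uvec{u}_h(\xi^q)\big)\,w_q .
\end{equation*}
Expanding $\uvec{u}_h(1)=\sum_m\uvec{\co}^m\widehat{\phi}^m(1)$ and $\uvec{u}_h(\xi^q)=\sum_m\uvec{\co}^m\widehat{\phi}^m(\xi^q)$ shows that the $\ell$-th equation of the arbitrary-basis \ADERIWF~\eqref{eq:weakproblem_fully_discrete_arbitrary} is exactly $R[\widehat{\phi}^\ell]=\uvec{0}$, whereas expanding in the nodal representation shows that the $\ell$-th equation of the Lagrangian \ADERIWF~\eqref{eq:weakproblemdiscrete}, with the integrals replaced by the same quadrature, is exactly $R[\widehat{\psi}^\ell]=\uvec{0}$. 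Since $R$ is linear in $\chi$, the change-of-basis relation yields $R[\widehat{\phi}^\ell]=\sum_{k=0}^M V_{k,\ell}\,R[\widehat{\psi}^k]$; in matrix form, the arbitrary residual vector equals $V^T$ times the nodal residual vector, both evaluated at the same polynomial $\uvec{u}_h$.

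It then follows that $\underline{\uvec{\co}}$ solves \eqref{eq:weakproblem_fully_discrete_arbitrary} if and only if the corresponding nodal vector $\underline{\uvec{u}}=V\underline{\uvec{\co}}$ solves \eqref{eq:weakproblemdiscrete}, because $V^T$ is invertible; the two solutions describe the very same polynomial $\uvec{u}_h$ and hence yield the same update $\uvec{u}_{n+1}=\uvec{u}_h(1)$. I expect the only delicate point to be the insistence on using the same quadrature in both formulations, since this is precisely what makes the residual functional $R$ literally identical for the two systems. It matters in particular for the mass-like term: for the GLB rule the exact mass matrix and its quadrature (collocation) counterpart do not coincide, whereas the stiffness-like term, being a polynomial of degree $2M-1$, is integrated exactly by both GL* rules and therefore causes no discrepancy. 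Apart from tracking these terms carefully and recording the invertibility of $V$, the argument is a routine exploitation of the linearity of the discrete weak form in its test function.
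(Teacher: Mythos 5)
Your proof is correct, and it takes a genuinely different route from the paper's. Both arguments pivot on the same change-of-basis matrix (your $V$ is the paper's $\mathcal{H}$, with $\mathcal{H}_{\ell,m}=\hphi^m(\xi^\ell)$), but the paper proceeds by matrix algebra on the fixed-point forms: it recasts the arbitrary-basis system as $B\undu - \undr - \Delta t\,\Lambda\,\undG(\undv)=\uvec{0}$ with $\Lambda=\mathcal{H}^T W$, inverts $B$ (thus leaning on Theorem~\ref{th:invB}), and then verifies two separate identities, namely $\mathcal{H}B^{-1}\undr=\undu_n$ (via the partition-of-unity coefficients $\underline{\gamma}$ with $\sum_m\gamma_m\hphi^m\equiv 1$) and $B=\mathcal{H}^T B_{GL*}\mathcal{H}$ (via exact interpolation of $\hphi^\ell$ and $\hphi^m$ at the nodes). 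You instead observe that, once the same quadrature is used, both systems are evaluations of a single residual functional $R$, linear in the test function, on two different bases, so the two residual vectors are related by $V^T$ and the invertibility of $V$ finishes the argument. Your route is shorter, never needs $B$ or $B_{GL*}$ to be invertible, and in fact never uses any property of the GL* weights: the equivalence holds verbatim for any quadrature with $M+1$ distinct nodes, the GL* choice mattering only for the accuracy results downstream. (One step you leave implicit and should record: identifying the nodal system \eqref{eq:weakproblemdiscrete} with $R[\hpsi^\ell]=\uvec{0}$ requires collapsing the quadrature mass matrix against $\undG$ via $\hpsi^m(\xi^q)=\delta_{mq}$ and $\uvec{u}_h(\xi^q)=\uvec{u}^q$, which is exactly where the hypothesis that the subtimenodes coincide with the quadrature points enters.) What the paper's computation buys in exchange is, first, the explicit conjugation identity $B=\mathcal{H}^T B_{GL*}\mathcal{H}$, which is reused in \ref{app:invertB} to prove Theorem~\ref{th:invB}, and, second, the fact that after the change of variables the two explicit Picard maps \eqref{eq:ADER_Picard} are literally the same map, so the iterates — not only the exact solutions of the two nonlinear systems — coincide; since ADER performs only finitely many iterations, this stronger conclusion matters in practice. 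Your argument can be upgraded to deliver it as well, by applying the same linearity-in-the-test-function reasoning to each Picard step separately, but as written you identify only the solution sets.
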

\begin{proof}
	\begin{subequations}
		Let us observe that, being $\lbrace\hphi^{m}\rbrace_{m=0,\dots,M}$ a basis of the space of the polynomials of degree $M$ over $[0,1]$, there exists a unique vector of coefficients $\underline{\gamma}=(\gamma_0,\dots,\gamma_M)^T$ such that 
		\begin{equation}
			\sum_{m=0}^M \gamma_m \hphi^{m} \equiv 1.
			\label{eq:gamma}
		\end{equation}
		
		We aim at rewriting \eqref{eq:weakproblem_fully_discrete_arbitrary} into a matrix formulation, as in \eqref{eq:ADER_system_final}, and at comparing the resulting systems. We first introduce 
		$\uvec{\va}^q=\uvec{u}_h(t_n+\Delta t\xi^q)=\sum_{m=0}^M  \uvec{\co}^m \widehat\phi^m(\xi^q)$, the reconstructed solution value in the GL* quadrature point $\xi^q$, so that $\uvec{u}_h(t)=\sum_{m=0}^M\uvec{\co}^m\phi^m(t)=\sum_{q=0}^M\uvec{\va}^q\psi^q(t)$. 
		We also define the change of basis matrix $\IM$ such that $\underline{\uvec{\va}}=\IM\underline{\uvec{\co}}$, where the general element of such matrix is defined as $\IM_{\ell,m}=\hphi^m(\xi^\ell)$.

		Hence, system \eqref{eq:weakproblem_fully_discrete_arbitrary} can be equivalently recast as
		\begin{equation}
			B \undu - \undr - \Delta t \Lambda  \undG(\undv) =\uvec{0},
			\label{eq:ADER_system_arbitrary}
		\end{equation}
		where 
		\begin{align}
			\begin{split}
				&B_{\ell,m}:=  \hphi^\ell(1)\hphi^m(1)- \sum_{q=0}^M \left(\frac{d}{d\xi}\hphi^\ell(\xi^q)\right)\hphi^m(\xi^q)w_q , \qquad \Lambda_{\ell,m}:=\hphi^\ell(\xi^m) w_m,  \\ 
				&\undu:=\begin{pmatrix}
					\uvec{\co}^0\\
					\vdots\\
					\uvec{\co}^M
				\end{pmatrix},\quad
				\undr:=\begin{pmatrix}
					\phi^0(t_n)\uvec{u}_n\\
					\vdots\\
					\phi^M(t_n)\uvec{u}_n
				\end{pmatrix},\quad
				\undG(\undv):=\begin{pmatrix}
					\uvec{G}(t^0,\uvec{\va}^0)\\
					\vdots\\
					\uvec{G}(t^M,\uvec{\va}^M)
				\end{pmatrix}, \quad \underline{\uvec{\va}}=\IM\underline{\uvec{\co}}.
			\end{split}
			\label{eq:ADER_structure_arbitrary}
		\end{align}

		Again, for compactness, the structures are referred to a scalar problem and the matrices should be block expanded for a vectorial one. 
		We can rewrite $\Lambda$ as $\Lambda=\IM^T W$ with $W$ being a diagonal matrix having as entries the GL* quadrature weights $w_m$.

		Then, inverting $B$ and multiplying by $\IM$, we have that system \eqref{eq:ADER_system_arbitrary} is equivalent to
		\begin{equation}
			\undv-\IM B^{-1}\undr-\Delta t \IM B^{-1}\IM^T W  \underline{\uvec{G}}(\underline{\uvec{v}})=\uvec{0}.
			\label{eq:ADER_system_final2}
		\end{equation}

		Now, we can compare such system with the ADER system \eqref{eq:ADER_system_final} obtained for GL* subtimenodes and quadrature.
		In particular, the two formulations coincide if and only if
		\begin{equation}\label{eq:equivalence_equi_GLG}
			\begin{cases}
				\IM B^{-1}\undr = \undu_n  \\
				\IM B^{-1} \IM^T W= B_{GL*}^{-1}\Lambda_{GL*},
			\end{cases}
		\end{equation}

		where $B_{GL*}$ and $\Lambda_{GL*}$ are the ADER structures given in \eqref{eq:ADER_structure} with GL* subtimenodes, also assumed as quadrature points for the computation of the integrals.
		
		Let us start by the first equivalence of \eqref{eq:equivalence_equi_GLG}. Let us simplify the notation of $\undr$ writing it as  
		$
		\undr=\uhphi(0)\uvec{u}_n$, 
		where $\uvec{u}_n$ is meant to be multiplied to each component of $\uhphi(0)$. With this, we just need to prove that
		\begin{equation}
			\IM B^{-1} \uhphi(0)=\uvec{1}.
		\end{equation}
		Recalling that $\sum_{m=0}^M \gamma_m\hphi^m(\xi) \equiv 1$, we can instead prove that 
		\begin{equation}
			\uhphi(0)=B\underline{\gamma}, 
			\label{eq:intermidiatex}
		\end{equation}
		as we would have $\left(\IM B^{-1} \uhphi(0)\right)_{\ell}=\left(\IM \underline{\gamma}\right)_{\ell}=\sum_{m=0}^M \hphi^m(\xi^\ell) \gamma_m=1$.
		Equality \eqref{eq:intermidiatex} can be proven expanding the right hand side:
		\begin{align}
			\begin{split}
				\left(B\underline{\gamma}\right)_\ell&=\sum_{m=0}^M \left[ \hphi^\ell(1)\hphi^m(1)- \int_{0}^{1} \left(\frac{d}{d\xi}\hphi^\ell(\xi)\right)\hphi^m(\xi)d\xi \right]\gamma_m\\
				&=\hphi^\ell(1)\left(\sum_{m=0}^M \hphi^m(1)\gamma_m\right) - \int_{0}^{1}\left( \frac{d}{d\xi}\hphi^\ell(\xi)\right) \left(\sum_{m=0}^M\hphi^m(\xi)\gamma_m \right)d\xi \\
				&= \hphi^\ell(1)\cdot 1- \int_{0}^{1} \left(\frac{d}{d\xi}\hphi^\ell(\xi)\right) \cdot 1 d\xi=\hphi^\ell(1)-\hphi^\ell(1)+\hphi^\ell(0)=\hphi^\ell(0),
			\end{split}
		\end{align}
		thanks again to \eqref{eq:gamma} 
		and the exactness of the quadrature formula for polynomials of degree $M-1$.\\
		For the second equality of \eqref{eq:equivalence_equi_GLG}, let us first notice that, due to the assumption on the quadrature, we have $\Lambda_{GL*}=W$.  
		Thus, we suffice to prove
		\begin{equation}\label{eq:equivalence_B_matrices}
			\IM B^{-1} \IM^T = B_{GL*}^{-1}.
		\end{equation}
		Taking the inverse of the previous equality and inverting the $\IM$ matrices, we get
		\begin{align*}
			\IM B^{-1} \IM^T = B_{GL*}^{-1}  \Longleftrightarrow 
			(\IM^T)^{-1} B(\IM)^{-1}  = B_{GL*}  \Longleftrightarrow 
			B=\IM^T B_{GL*}\IM.
		\end{align*}
		Let us compute the general entry of the matrix at the right-hand side
		\begin{align}
			\begin{split}
				\left(\IM^T B_{GL*}\IM\right)_{\ell,m}&=\sum_{i=0}^M  \left(\IM^T\right)_{\ell,i} \sum_{j=0}^M \left(B_{GL*}\right)_{i,j}  \IM_{j,m}\\
				&=\sum_{i=0}^M  \hphi^\ell(\xi^i) \sum_{j=0}^M \left[  \widehat{\psi}^i(1)\widehat{\psi}^j(1)- \int_{0}^{1} \left(\frac{d}{d\xi}\widehat{\psi}^i(\xi)\right) \widehat{\psi}^j(\xi) d\xi \right]  \hphi^m(\xi^j)\\
				&=\sum_{i=0}^M  \hphi^\ell(\xi^i)\widehat{\psi}^i(1) \sum_{j=0}^M   \widehat{\psi}^j(1)\hphi^m(\xi^j) - \int_{0}^{1} \frac{d}{d\xi} \left(\sum_{i=0}^M\hphi^\ell(\xi^i) \widehat{\psi}^i(\xi)\right)\sum_{j=0}^M \widehat{\psi}^j(\xi)\hphi^m(\xi^j) d\xi.
			\end{split}
		\end{align}
		
		Now, observing that $\sum_{i=0}^M  \hphi^\ell(\xi^i)\widehat{\psi}^i$ and $\sum_{j=0}^M  \hphi^m(\xi^j)\widehat{\psi}^j$ are the exact interpolations of $\hphi^\ell$ and $\hphi^m$ respectively, we finally have
		\begin{align}\label{eq:equivalence_B_matrices2}
			\begin{split}
				\left(\IM^T B_{GL*}\IM\right)_{\ell,m}&=  \hphi^\ell(1) \hphi^m(1) - \int_{0}^{1} \left(\frac{d}{d\xi} \hphi^\ell(\xi)\right) \hphi^m(\xi) d\xi=B_{\ell,m}.
			\end{split}
		\end{align}
		Clearly, since the nodal values define a unique polynomial, also the final interpolation step will be the same for the two ADER methods involved in this theorem. 
	\end{subequations}
\end{proof}
The previous result is interesting under several points of view. 
Thanks to Theorem \ref{th:link}, we can give a precise characterization of the ADER methods for arbitrary bases. 
If, for some reason, a user wants to adopt another basis other than the GL* polynomials for the definition of the ADER method, for example a Taylor basis, then the properties of the resulting scheme do not change, provided that the function $\uvec{G}$ is directly evaluated in the GL* quadrature points. Therefore, also the accuracy of the resulting scheme is known and it is characterized by the one of the \ADERRK-GL* methods in the previous section.

Notice that Theorem~\ref{th:link} applies also in the context of nodal bases: if one chooses $M+1$ equispaced subtimenodes for the definition of the ADER method but $\uvec{G}$ is evaluated in the GL* quadrature points, instead of the basis nodal values, then the resulting accuracy will not be $M+1$, but rather $2M$ and $2M+1$ for GLB and GLG, respectively.

\section{ADER as a Deferred Correction method}
\label{sec:DeC}

This section aims to determine the optimal number of iterations for an ADER method using the DeC formalism, an abstract framework used to approximate arbitrarily well the solution of implicit (nonlinear) discretizations of analytical problems, through an easy iterative procedure. 
Originally presented in 1949 \cite{fox1949some}, the DeC was applied in different flavors to ODEs \cite{minion2003semi,layton2005implications,huang2006accelerating,minion2011hybrid,mPDeC,loredavide,han2021dec} and PDEs \cite{minion2004semi,speck2015multi,Decremi,ciallella2022arbitrary,DOOM,abgrall2020high,bacigaluppi2019posteriori,abgrall2019high} contexts.
Abgrall \cite{Decremi} proposed a formalization using two operators $\lopd^1,\lopd^2: X \rightarrow Y$, depending on a same parameter $\Delta$, corresponding to two different discretizations of the same problem: $\lopd^2$ is a \textit{difficult-to-solve}  high order nonlinear implicit discretization of the problem and $\lopd^1$ is an \textit{easy-to-solve}  low order (explicit) one.
Aiming at $\undu_\Delta\in X$, solution of $\lopd^2(\undu_\Delta)=\uvec{0}_Y$, we iteratively approximate it arbitrarily well through an \textit{easy-to-solve} (explicit) iteration process as prescribed in the next theorem.
\begin{theorem}[DeC; Abgrall \cite{Decremi}]\label{th:DeC}
	For a fixed $\underline{\uvec{u}}^{(0)}\in X$, let us define the sequence of vectors $\underline{\uvec{u}}^{(p)}$ as the solution of
	\begin{equation}
		\label{eq:DeC_iteration}
		\lopd^1(\underline{\uvec{u}}^{(p)}):=\lopd^1(\underline{\uvec{u}}^{(p-1)})-\lopd^2(\underline{\uvec{u}}^{(p-1)}), \quad p\geq 1.
	\end{equation}
	If the following conditions on the operators $\lopd^1$ and $\lopd^2$ hold
	\begin{enumerate}
		\item $\exists ! \,\usol \in X$ solution of $\lopd^2$ such that $\lopd^2(\usol)=\uvec{0}_Y$ (Existence of a unique solution to $\lopd^2$);
		\item $\exists \,\alpha_1 \geq 0$ independent of $\Delta$ such that \begin{equation}
			\norm{\lopd^1(\underline{\uvec{v}})-\lopd^1(\underline{\uvec{w}})}_Y\geq \alpha_1\norm{\underline{\uvec{v}}-\underline{\uvec{w}}}_X, ~ \forall \underline{\uvec{v}},\underline{\uvec{w}}\in X \text{(Coercivity-like property of $\lopd^1$)};
			\label{eq:DeC_coercivity}
		\end{equation}
		\item $\exists\, \alpha_2 \geq 0$ independent of $\Delta$ such that 
		\begin{equation}
			\begin{split}
				\norm{\left[\lopd^1(\underline{\uvec{v}})\!-\!\lopd^2(\underline{\uvec{v}})\right]\!-\!\left[\lopd^1(\underline{\uvec{w}})\!-\!\lopd^2(\underline{\uvec{w}})\right]}_Y\!\leq & \alpha_2 \Delta \!\norm{\underline{\uvec{v}}-\underline{\uvec{w}}}_X ,~\forall \underline{\uvec{v}},\underline{\uvec{w}}\in X;\\
				&\text{(Lipschitz-continuity-like property of $\lopd^1-\lopd^2$)}
			\end{split}
			\label{eq:DeC_lipschitz}
		\end{equation}
	\end{enumerate}
	then, we can prove the following error estimate
	\begin{equation}
		\label{eq:DeC_accuracy}
		\norm{\underline{\uvec{u}}^{(p)}-\usol}_X \leq \left( \Delta \frac{\alpha_2}{\alpha_1} \right)^p\norm{\underline{\uvec{u}}^{(0)}-\usol}_X, \quad \forall p\in \mathbb{N}. \end{equation}
\end{theorem}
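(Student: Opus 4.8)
The plan is to establish \eqref{eq:DeC_accuracy} by induction on the iteration index $p$, with the entire argument resting on one structural observation: the exact solution $\usol$ is a fixed point of the correction map \eqref{eq:DeC_iteration}. Indeed, since $\lopd^2(\usol)=\uvec{0}_Y$ by the first hypothesis, the trivial identity $\lopd^1(\usol)=\lopd^1(\usol)-\lopd^2(\usol)$ shows that $\usol$ obeys the very same relation that produces $\underline{\uvec{u}}^{(p)}$ from $\underline{\uvec{u}}^{(p-1)}$. The base case $p=0$ is the tautology $\norm{\underline{\uvec{u}}^{(0)}-\usol}_X=\norm{\underline{\uvec{u}}^{(0)}-\usol}_X$, so there is nothing to check.

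For the inductive step, I would subtract the fixed-point identity for $\usol$ from the defining relation \eqref{eq:DeC_iteration} for $\underline{\uvec{u}}^{(p)}$, obtaining
\begin{equation*}
\lopd^1(\underline{\uvec{u}}^{(p)})-\lopd^1(\usol)=\left[\lopd^1(\underline{\uvec{u}}^{(p-1)})-\lopd^2(\underline{\uvec{u}}^{(p-1)})\right]-\left[\lopd^1(\usol)-\lopd^2(\usol)\right].
\end{equation*}
The right-hand side is precisely the quantity controlled by the Lipschitz-continuity-like bound \eqref{eq:DeC_lipschitz} evaluated at $\underline{\uvec{v}}=\underline{\uvec{u}}^{(p-1)}$ and $\underline{\uvec{w}}=\usol$, while the left-hand side is bounded below by the coercivity-like property \eqref{eq:DeC_coercivity}. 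Chaining these two inequalities gives
\begin{equation*}
\alpha_1\norm{\underline{\uvec{u}}^{(p)}-\usol}_X\leq\norm{\lopd^1(\underline{\uvec{u}}^{(p)})-\lopd^1(\usol)}_Y\leq\alpha_2\Delta\norm{\underline{\uvec{u}}^{(p-1)}-\usol}_X,
\end{equation*}
and dividing by $\alpha_1>0$ produces the one-step contraction $\norm{\underline{\uvec{u}}^{(p)}-\usol}_X\leq(\Delta\alpha_2/\alpha_1)\norm{\underline{\uvec{u}}^{(p-1)}-\usol}_X$.

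Feeding this single-step estimate into the inductive hypothesis $\norm{\underline{\uvec{u}}^{(p-1)}-\usol}_X\leq(\Delta\alpha_2/\alpha_1)^{p-1}\norm{\underline{\uvec{u}}^{(0)}-\usol}_X$ immediately yields the claimed power $(\Delta\alpha_2/\alpha_1)^{p}$, closing the induction. I expect essentially no technical obstacle in the calculation itself; the only genuinely delicate point will be the conceptual one of recognizing that $\usol$ must be treated as a fixed point of the iteration, which is exactly what allows the two abstract hypotheses to be paired — coercivity to extract $\norm{\cdot}_X$ from the left-hand side, and the Lipschitz-continuity-like property to deposit the contraction factor $\Delta\alpha_2$ on the right-hand side. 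A caveat worth flagging is that the estimate only reads as a convergence statement when $\Delta\alpha_2/\alpha_1<1$ (and tacitly requires $\alpha_1>0$ for the division): for such $\Delta$ the iterates converge geometrically to $\usol$, whereas for larger $\Delta$ the inequality remains valid but need not imply decay.
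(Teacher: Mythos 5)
Your proof is correct and is essentially the same argument the paper points to: the paper defers to the cited references and notes only that the proof "uses induction on the iterations," which is exactly your structure — recognize $\usol$ as a fixed point of \eqref{eq:DeC_iteration}, pair the coercivity bound \eqref{eq:DeC_coercivity} with the Lipschitz-type bound \eqref{eq:DeC_lipschitz} to get the one-step contraction, and chain. Your caveat that the division tacitly needs $\alpha_1>0$ (the statement only asserts $\alpha_1\geq 0$) is a fair and accurate observation, but it does not change the substance of the argument.
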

The proof can be found in \cite{Decremi,loredavide,DOOM} and it uses induction on the iterations.
Estimate \eqref{eq:DeC_accuracy} tells that $\undu_\Delta$ can be approximated with arbitrarily high precision as $p \rightarrow +\infty$. 
However, $\undu_\Delta$ is itself an approximation of the exact solution $\undu_{ex}$ of the original analytical problem to which the operators are associated.
If its order of accuracy is $R$, it suffices to find an $R$-th order accurate approximation of $\undu_\Delta$ to get the same formal order of accuracy.
By triangular inequality, if $\undu^{(0)}$ is $O(\Delta)$-accurate, the order of accuracy of $\undu^{(p)}$ with respect to $\undu_{ex}$ is $\min{(p,R)}$, hence, the optimal choice for the final number of iterations $P$ is $P=R$. Extra iterations may (slightly) improve the accuracy but they do not increase the order of accuracy and are essentially a waste of computational resources.

\subsection{Link ADER-DeC}
We will show here how the ADER method presented in Section~\ref{sec:ADER} can be put in a DeC formalism with $\Delta=\Delta t$. We start by defining the $N$-th order accurate operator $\lopd^2:\mathbb{R}^{(M+1)\times Q} \rightarrow \mathbb{R}^{(M+1)\times Q}$ as
\begin{align}
	\label{eq:ADER_l2}
	\lopdt^2(\undu):=\undu-\undu_n-\Delta t B^{-1}\Lambda  \underline{\uvec{G}}(\underline{\uvec{u}}).
\end{align}
Let us notice that the problem of finding $\usol$ such that $\lopdt^2(\usol)=\uvec{0}$ is indeed equivalent to solving the nonlinear system \eqref{eq:ADER_system_final}. 
We introduce the low order operator $\lopd^1:\mathbb{R}^{(M+1)\times Q} \rightarrow \mathbb{R}^{(M+1)\times Q}$ as a first order explicit approximation of \eqref{eq:ADER_system_final}
\begin{align}
	\label{eq:ADER_l1}
	\lopdt^1(\undu):=\undu-\undu_n-\Delta t B^{-1}\Lambda  \underline{\uvec{G}}(\undu_n).
\end{align}
\begin{remark}[On the accuracy of the operators]
	We remark that the mentioned order of accuracy of the operators $\lopd^1$ and $\lopd^2$ is referred, in this context, to the accuracy of the approximation $\uvec{u}_{n+1}=\uapp(t_{n+1})$ that the related solution coefficients $\undu$ induce via \eqref{eq:uh}.
\end{remark}
We can easily prove that the operators that we have defined satisfy the assumptions of Theorem \ref{th:DeC}.
\begin{theorem}[Properties of $\lopd^1$ and $\lopd^2$; Han Veiga, \"Offner, Torlo \cite{han2021dec}]
	The operators $\lopd^1$ and $\lopd^2$, given by \eqref{eq:ADER_l1} and \eqref{eq:ADER_l2}, satisfy the hypotheses of Theorem \ref{th:DeC}.
\end{theorem}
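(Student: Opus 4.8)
The plan is to verify the three hypotheses of Theorem~\ref{th:DeC} in turn, taking $X=Y=\mathbb{R}^{(M+1)\times Q}$ equipped with the same norm, which I choose to be the one induced by $\norm{\cdot}_\infty$ so as to match the estimate of Proposition~\ref{prop:iterative_procedure}. The first hypothesis, existence and uniqueness of $\usol$ with $\lopd^2(\usol)=\uvec{0}$, is immediate: the equation $\lopdt^2(\usol)=\uvec{0}$ is precisely the nonlinear system~\eqref{eq:ADER_system_final}, for which Proposition~\ref{prop:iterative_procedure} already guarantees a unique solution once $\Delta t$ is small enough. No further work is needed here.

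For the coercivity-like property of $\lopd^1$, the key observation is that the nonlinear term in $\lopdt^1$ is evaluated at the fixed vector $\undu_n$, independently of the argument. Hence, for any $\undv,\undw\in X$,
\[
\lopdt^1(\undv)-\lopdt^1(\undw)=\left(\undv-\undw\right)-\Delta t\,B^{-1}\Lambda\left(\undG(\undu_n)-\undG(\undu_n)\right)=\undv-\undw,
\]
so the $\Delta t$ contribution cancels exactly and $\norm{\lopdt^1(\undv)-\lopdt^1(\undw)}_Y=\norm{\undv-\undw}_X$. Thus the second hypothesis holds with the $\Delta t$-independent constant $\alpha_1=1$, in fact with equality.

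For the Lipschitz-continuity-like property of $\lopd^1-\lopd^2$, I would first compute directly from \eqref{eq:ADER_l1} and \eqref{eq:ADER_l2} that
\[
\lopdt^1(\undu)-\lopdt^2(\undu)=\Delta t\,B^{-1}\Lambda\left(\undG(\undu)-\undG(\undu_n)\right),
\]
using that the $\undu-\undu_n$ linear pieces cancel. Taking the difference at $\undv$ and $\undw$, the $\undG(\undu_n)$ terms drop out and I obtain $\Delta t\,B^{-1}\Lambda\left(\undG(\undv)-\undG(\undw)\right)$. Passing to norms and invoking submultiplicativity together with the Lipschitz continuity of $\uvec{G}$ with constant $C_{Lip}$, which transfers componentwise to the block map $\undG$, this is bounded by $\Delta t\,\norm{B^{-1}\Lambda}_\infty C_{Lip}\norm{\undv-\undw}_X$. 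Hence the third hypothesis holds with $\alpha_2=\norm{B^{-1}\Lambda}_\infty C_{Lip}$, which is manifestly independent of $\Delta t$.

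The proof is therefore essentially a direct computation whose whole content lies in the two cancellations exploited above. I do not expect a genuine obstacle: the only mild care required is in the Lipschitz estimate, where one must check that the scalar constant $C_{Lip}$ of $\uvec{G}$ controls $\undG$ uniformly in the chosen norm, and in noting that $\norm{B^{-1}\Lambda}_\infty$ is a finite, $\Delta t$-independent quantity, the latter being guaranteed by the invertibility of $B$ established in Theorem~\ref{th:invB}. In this sense the statement merely repackages the structure already underlying Proposition~\ref{prop:iterative_procedure}.
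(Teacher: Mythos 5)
Your proof is correct and complete: uniqueness of the solution of $\lopd^2$ follows from Proposition~\ref{prop:iterative_procedure}, the exact cancellation of the $\undG(\undu_n)$ term gives the coercivity property with $\alpha_1=1$, and the difference $\lopd^1-\lopd^2$ reduces to $\Delta t\,B^{-1}\Lambda\left(\undG(\cdot)-\undG(\undu_n)\right)$, yielding $\alpha_2=\norm{B^{-1}\Lambda}_\infty C_{Lip}$, which is $\Delta t$-independent since $B$ and $\Lambda$ are built on the reference interval and $B$ is invertible by Theorem~\ref{th:invB}. The paper does not reproduce the argument itself but defers to Propositions 4.3 and 4.4 of the cited reference, and your direct verification is precisely the standard computation carried out there, so the two approaches coincide.
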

The proof can be found in \cite[Propositions 4.3 and 4.4]{han2021dec}.
Finally, the resulting DeC iteration \eqref{eq:DeC_iteration}, in this particular case, after an easy direct computation, reads
\begin{equation}
	\undu^{(p)}=\undu_n+\Delta t B^{-1}\Lambda  \underline{\uvec{G}}(\undu^{(p-1)}),
	\label{eq:ADER_DeC_iteration}
\end{equation}
which coincides with the iteration defined in \eqref{eq:ADER_Picard}. However, the DeC formalism allows us to select the optimal number of iterations, which is equal to $N$, i.e., the order of $\lopd^2=0$, provided that $\undu^{(0)}$ is $O(\Delta t)$-accurate. Therefore, we can set $\undu^{(0)}:=\undu_n$ and $P=N$ to get the formal order of accuracy.

\section{Novel modified ADER methods}
\label{sec:ADERNEW}

In this section, we describe some efficient versions of ADER methods
based on a modification of the original approach,
following the strategy proposed in \cite{loredavide}, in a DeC context for ODEs, and then generalized and applied to an ADER-DG framework in \cite{DOOM}.
Actually, the basic idea was firstly introduced by Minion in \cite{minion2003semi} as ladder DeC methods for ODEs, counting several follow-ups \cite{speck2015multi,hamon2019multi,franco2018multigrid,minion2015interweaving,benedusi2021experimental}. However, the approach was very specific for DeC time-integration methods for ODEs.
The more general formulation presented in \cite{DOOM} allows for applications to many other contexts, e.g., to ADER schemes.
In particular, the modification consists in redesigning the whole iterative process in such a way that the discretization accuracy increases accordingly to the order of accuracy of the numerical solution at each specific iteration. In practice, we will look for a solution in a different approximation space at each iteration $p$, i.e., the time reconstruction is such that $\uapp^{(p)}(t) \in \mathbb P_p$ up to a maximum reconstruction degree, where $\mathbb P_p$ is the space of polynomials of degree $p$ over $[t_n,t_{n+1}].$ 
This is beneficial under many points of view, among which: 
\begin{itemize}
	\item we save computations in the early iterations, as we work with smaller vectors and matrices;
	\item $p$-adaptivity can be naturally embedded in the new methods, as there is no formal upper bound on the order of accuracy. 
\end{itemize}
In order to change iteration structures along the iterative process, some embeddings between different spaces $\mathcal{E}^{(p-1)}:\mathbb{P}^{p-1} \hookrightarrow \mathbb P^{p}$, e.g., interpolations or $L^2$-projections, are needed to pass from $\undu^{(p-1)}$ to some $\undu^{*(p-1)}$, which is a suitable input for the $p$-th iteration to compute $\undu^{(p)}$.  
For the specific context of DeC methods, the following theorem holds.
\begin{theorem}[Micalizzi, Torlo, Boscheri \cite{DOOM}]\label{th:NEWDEC}
	Let us consider a problem with exact solution $\uex \in Z$ and the normed vector spaces $(\Xp,\norm{\cdot}_{\Xp})$ and $(\Yp,\norm{\cdot}_{\Yp})$  for $p\in \mathbb{N}$,  $p\geq 1$. 
	Further, let us assume that some operators $\lopdt^{1,(p)},\lopdt^{2,(p)}:\Xp \rightarrow \Yp$ are defined for $p \geq 1 $, dependent on the same parameter $\Delta$ and satisfying the assumptions in Theorem \ref{th:DeC} for $\alpha_{1}^{(p)}, \alpha_{2}^{(p)}>0$ and $\usolp\in \Xp$. 
	Let us also assume that $ \forall p\in \mathbb{N}$ there exists an embedding operator $\embep:\Xp\rightarrow X^{(p+1)}$ and a projection $\projp:Z\rightarrow \Xp$.
	We define $\undu^{*(p)}:=\embep(\undu^{(p)}) \in X^{(p+1)}$ and $\uexp:=\projp(\uex) \in \Xp$.
	Let us consider the modified DeC method whose general $p$-th iteration is given by
	\begin{equation}
		\begin{cases}
			\undu^{*(p-1)}:=\mathcal{E}^{(p-1)}(\undu^{(p-1)}),\\
			\lopd^{1,(p)}(\underline{\uvec{u}}^{(p)}):=\lopd^{1,(p)}(\underline{\uvec{u}}^{*(p-1)})-\lopd^{2,(p)}(\underline{\uvec{u}}^{*(p-1)}),
		\end{cases}
		\label{eq:NEWDEC_p_iteration}
	\end{equation}
	for some $\underline{\uvec{u}}^{(0)}\in X^{(0)}$.
	Moreover, assume that $\norm{\usolp-\uexp}_{\Xp}=O(\Delta^{p+1}), $ for $p \geq 1$, that there exists $C>0$ independent of $\Delta$ such that $\norm{\undu^{*(p)}-\undu_{ex}^{(p+1)}}_{X^{(p+1)}}\leq C\norm{\undu^{(p)}-\uexp}_{\Xp},$  for $p\geq 0$ and that $\norm{\undu^{(0)}-\undu_{ex}^{(0)}}_{X^{(0)}}=O(\Delta)$.
	Then, the following error estimate holds
	\begin{equation}
		\norm{\undu^{(p)}-\uexp}_{\Xp}=O(\Delta^{p+1}), \quad \forall p\in \mathbb{N}.
		\label{eq:NEWDEC_accuracy}
	\end{equation}
\end{theorem}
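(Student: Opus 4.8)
The plan is to argue by induction on the iteration index $p$, reproducing the contraction mechanism of Theorem~\ref{th:DeC} inside each fixed space $\Xp$ while using the stability of the embeddings $\embep$ to bridge consecutive spaces. The base case is $p=0$, which is furnished directly by the hypothesis $\norm{\undu^{(0)}-\undu_{ex}^{(0)}}_{X^{(0)}}=O(\Delta)$, matching \eqref{eq:NEWDEC_accuracy} at $p=0$. For the inductive step I would assume $\norm{\undu^{(p-1)}-\undu_{ex}^{(p-1)}}_{X^{(p-1)}}=O(\Delta^{p})$ and establish the estimate one order higher in $\Xp$.

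First I would insert the exact implicit solution $\usolp\in\Xp$ of $\lopdt^{2,(p)}$ (which exists and is unique by the first assumption of Theorem~\ref{th:DeC} at level $p$) as a pivot, and split by the triangle inequality
\[
\norm{\undu^{(p)}-\uexp}_{\Xp}\leq \norm{\undu^{(p)}-\usolp}_{\Xp}+\norm{\usolp-\uexp}_{\Xp}.
\]
The second term is $O(\Delta^{p+1})$ by the hypothesis $\norm{\usolp-\uexp}_{\Xp}=O(\Delta^{p+1})$, so it suffices to control the first term to the same order. For that term I would run the single-space DeC estimate purely within $\Xp$: starting from the second line of \eqref{eq:NEWDEC_p_iteration} and subtracting the identity $\lopdt^{1,(p)}(\usolp)=\lopdt^{1,(p)}(\usolp)-\lopdt^{2,(p)}(\usolp)$, valid since $\lopdt^{2,(p)}(\usolp)=\uvec{0}$, the coercivity-like bound \eqref{eq:DeC_coercivity} applied on the left and the Lipschitz-continuity-like bound \eqref{eq:DeC_lipschitz} applied on the right yield
\[
\norm{\undu^{(p)}-\usolp}_{\Xp}\leq \frac{\alpha_2^{(p)}}{\alpha_1^{(p)}}\,\Delta\,\norm{\undu^{*(p-1)}-\usolp}_{\Xp}.
\]

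It then remains to show $\norm{\undu^{*(p-1)}-\usolp}_{\Xp}=O(\Delta^{p})$, after which the extra factor $\Delta$ produces the desired $O(\Delta^{p+1})$. Splitting once more,
\[
\norm{\undu^{*(p-1)}-\usolp}_{\Xp}\leq\norm{\undu^{*(p-1)}-\uexp}_{\Xp}+\norm{\uexp-\usolp}_{\Xp},
\]
I would bound the first summand by $C\,\norm{\undu^{(p-1)}-\undu_{ex}^{(p-1)}}_{X^{(p-1)}}=O(\Delta^{p})$, applying the embedding-stability assumption (at index $p-1$) followed by the inductive hypothesis, while the second summand is again $O(\Delta^{p+1})$ by the assumption on $\usolp$. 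Combining the three displays closes the induction.

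The delicate point is precisely the transfer between spaces: the contraction estimate is intrinsically single-space, comparing the iterate $\undu^{(p)}$ only with the exact implicit solution $\usolp$ of the same level-$p$ operators, so the accuracy inherited from the previous level must be carried across the change of space. What makes the argument close is the embedding-stability hypothesis $\norm{\undu^{*(p)}-\undu_{ex}^{(p+1)}}_{X^{(p+1)}}\leq C\,\norm{\undu^{(p)}-\uexp}_{\Xp}$, guaranteeing that the accuracy gained in $X^{(p-1)}$ is not degraded when $\undu^{(p-1)}$ is lifted by $\mathcal{E}^{(p-1)}$ into $\Xp$; together with $\norm{\usolp-\uexp}_{\Xp}=O(\Delta^{p+1})$ it certifies that the input fed to the level-$p$ iteration is already $O(\Delta^{p})$-accurate, so that a single further iteration reaches $O(\Delta^{p+1})$. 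The only remaining bookkeeping is to verify that the hidden constants do not blow up as $p$ grows, which holds because $C$, $\alpha_1^{(p)}$ and $\alpha_2^{(p)}$ are all independent of $\Delta$ and only finitely many levels are involved for any fixed target order.
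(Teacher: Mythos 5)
Your proof is correct, and it follows essentially the same route as the argument the paper defers to in \cite{DOOM}: induction on the iteration index, splitting $\norm{\undu^{(p)}-\uexp}_{\Xp}$ via the pivot $\usolp$, applying the coercivity-like and Lipschitz-continuity-like bounds to get the single-space contraction $\norm{\undu^{(p)}-\usolp}_{\Xp}\leq \frac{\alpha_2^{(p)}}{\alpha_1^{(p)}}\Delta\norm{\undu^{*(p-1)}-\usolp}_{\Xp}$, and then closing with the embedding-stability hypothesis and the accuracy assumption on $\usolp$. The bookkeeping remarks (validity of hypothesis~1 only for $p\geq 1$, constants independent of $\Delta$ for each fixed $p$) are also handled correctly, so nothing is missing.
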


The proof can be found in \cite{DOOM}.
A fundamental difference between the framework of Theorem \ref{th:DeC} and the one of Theorem  \ref{th:NEWDEC} is the fact that the former deals with converge towards the solution of a fixed operator $\lopd^2$, see \eqref{eq:DeC_accuracy}, while in the latter the error estimate \eqref{eq:NEWDEC_accuracy} is referred, at each iteration, to a new and more accurate projection of the exact solution $\undu_{ex}$.

In the previous section, we have proved how the ADER methods can be put in a DeC framework. Now, we will see how the presented modification can be applied in this specific case. 
In particular, we propose three modifications, which differ in the way the embeddings between the different iterations are achieved, even though we will later prove that two of them are actually equivalent in the considered framework. 

We assume equispaced subtimenodes at the beginning and we generalize for other types of subtimenodes later on.

We introduce here, for each $p$, the space $X^{(p)}:= \mathbb{R}^{(M^{(p)}+1)\times Q}$ of the representation coefficients of vectorial polynomial functions in $(\mathbb P_{M^{(p)}})^Q$, where
the used basis functions $\psi^{m,(p)}$ for  $m=0,\dots,M^{(p)}$ are the Lagrange polynomials of degree $M^{(p)}$ associated to $M^{(p)}+1$ equispaced subtimenodes in the interval $[t_{n},t_{n+1}]$, collected in the vector $\underline{t}^{(p)}:=\left(t^{0,(p)},\dots,t^{{M^{(p)},(p)}} \right)^T$, with $M^{(p)}=p$ for all $p\neq 0$ and $M^{(0)}=1$.
It is also useful to introduce here some structures associated to such basis functions and in particular the matrices $B^{(p)}$ and $\Lambda^{(p)}$, defined as in \eqref{eq:ADER_structure} but considering the functions $\left\lbrace \psi^{m,(p)}\right\rbrace_{m=0,\dots,M^{(p)}}$ in place of $\left\lbrace \psi^{m}\right\rbrace_{m=0,\dots,M}$.

In the following, we will explain in detail the procedure to pass from $\undu^{(p-1)}\in X^{(p-1)}$ to $\undu^{(p)}\in X^{(p)}$ with the three approaches.

\subsection{\ADERu}
In this case, the embeddings consist in interpolations of the reconstructed numerical solution $\uvec{u}(t)$ in $[t_n,t_{n+1}]$ between one iteration and the next one.

We start by $\undu^{(0)}:=(\uvec{u}_n,\uvec{u}_n)^T\in X^{(0)}=\mathbb{R}^{2\times Q}$ associated to two subtimenodes, yielding $O(\Delta t)$-accuracy, and we perform the standard update \eqref{eq:ADER_DeC_iteration} with structures associated to two subtimenodes
\begin{equation}
	\undu^{(1)}=\undu_n^{(1)}+\Delta t \left(B^{(1)} \right)^{-1}\Lambda^{(1)}  \underline{\uvec{G}}(\undu^{(0)})\quad \text{with}\quad \undu_n^{(1)}:=\begin{pmatrix}
		\undu_n\\
		\undu_n
	\end{pmatrix}\in \mathbb{R}^{2\times Q}.
	\label{eq:ADERu1}
\end{equation} 
We get $\undu^{(1)}\in X^{(1)}=\mathbb{R}^{2\times Q}$, corresponding to two subtimenodes and first order accurate, which allows to get an $O(\Delta t^2)$-accurate linear reconstruction of the numerical solution in $[t_n,t_{n+1}].$ 
Then, we perform the embedding and, by a simple interpolation of the linear reconstruction, via a suitable interpolation matrix $H^{(1)}$, we get 
\begin{equation}
	\undu^{*(1)}:=H^{(1)}\undu^{(1)}\in X^{(2)}=\mathbb{R}^{3\times Q},
\end{equation}
corresponding to three equispaced subtimenodes, still $O(\Delta t^2)$-accurate. For the next iteration, $\undu^{*(1)}$ will be the starting point to compute $\undu^{(2)}$ with structures associated to three subtimenodes. 
Iteratively,  
at the generic iteration $p>1$, we pass from $\undu^{(p-1)}\in X^{(p-1)}=\mathbb{R}^{p\times Q}$ to $\undu^{(p)}\in X^{(p)}=\mathbb{R}^{(p+1)\times Q}$ through an interpolation and a standard ADER iteration
\begin{align}
	&\begin{cases}
		\undu^{*(p-1)}:=H^{(p-1)}\undu^{(p-1)}\in X^{(p)}=\mathbb{R}^{(p+1)\times Q},\\
		\undu^{(p)}=\undu_n^{(p)}+\Delta t \left(B^{(p)} \right)^{-1}\Lambda^{(p)}  \underline{\uvec{G}}(\undu^{*(p-1)})=\undu_n^{(p)}+\Delta t \left(B^{(p)} \right)^{-1}\Lambda^{(p)}  \underline{\uvec{G}}(H^{(p-1)}\undu^{(p-1)}),
	\end{cases}
	\label{eq:ADERu}
\end{align}
with $\undu^{(p)}\in X^{(p)}=\mathbb{R}^{(p+1)\times Q}$ being $p$-th order accurate and associated to $p+1$ equispaced subtimenodes. The interpolation matrices $H^{(p-1)}$ are defined by 
$
H^{(p-1)}_{\ell,m}:=\psi^{m,(p-1)}(t^{\ell,(p)})
$
and the vector $\undu_n^{(p)}$ has $p+1$ components equal to $\uvec{u}_n$. Clearly, $
H^{(p-1)}$ must be block-expanded in the context of a vectorial problem.
\begin{remark}[On the optimal number of iterations]\label{rmk:last_iteration}
	In such context, it is worth observing that $p+1$ subtimenodes could guarantee $(p+1)$-th order of accuracy. Therefore, if the final number of subtimenodes is fixed to be $M+1$, we perform $M$ iterations, getting $\undu^{(M)} \in X^{(M)}= \R^{(M+1)\times Q}$, plus one final iteration without interpolation, obtaining $\undu^{(M+1)}\in X^{(M+1)}=X^{(M)}$, to reach the maximal accuracy associated to such subtimenodes. This holds for equispaced subtimenodes, while in Section~\ref{sec:other_nodes} we will generalize this idea to other subtimenodes types. 
\end{remark}

\subsection{\ADERdu}
Contrarily to the previous case, here the embedding is performed on the evolution operator $\uvec{G}(t,\uvec{u}(t))$, rather than on $\uvec{u}(t)$. The name of the method is given to remark that the embedding is performed on the time derivative of the variable $\uvec{u}$. 

The iteration process is very similar to before and, starting again by two subtimenodes and a first iteration without interpolation, at the general iteration $p>1$ we have
\begin{equation}
	\begin{cases}
		\underline{\uvec{G}}^{*(p-1)}:=H^{(p-1)}\underline{\uvec{G}}(\undu^{(p-1)})\in \mathbb{R}^{(p+1)\times Q},\\
		\undu^{(p)}=\undu_n^{(p)}+\Delta t \left(B^{(p)} \right)^{-1}\Lambda^{(p)}  \underline{\uvec{G}}^{*(p-1)} = \undu_n^{(p)}+\Delta t \left(B^{(p)} \right)^{-1}\Lambda^{(p)}  H^{(p-1)}\underline{\uvec{G}}(\undu^{(p-1)}),
	\end{cases}
	\label{eq:ADERdu}
\end{equation}
where $\undu^{(p)}$ is again $p$-th order accurate.

Also in this case, Remark~\ref{rmk:last_iteration} holds and, if the final number of subtimenodes is fixed to be $M+1$, then $P=M+1$ iterations are recommended without interpolation at the last iteration. In particular, the final iteration without interpolation saturates the $(M+1)$-th order of accuracy associated to $M+1$ equispaced subtimenodes.

\subsection{ADER-$L^2$}
Also for this efficient ADER, the embedding is not performed on $\uvec{u}(t)$, but on $\uvec{G}(t,\uvec{u}(t))$. The difference with \ADERdu~is that in ADER-$L^2$ the embedding consists in a Galerkin projection of the evolution operator from the polynomial space corresponding to $X^{(p-1)}$ to the polynomial space associated to $X^{(p)}$.

For the general iteration $p>1$, we directly modify the discretization of the \ADERIWF~\eqref{eq:weakproblemdiscrete} by looking for the new solution $\uvec{u}_h^{(p)}(t)$ into the polynomial space of $X^{(p)}$, with test functions belonging as well to this space, while $\uvec{G}(t,\uvec{u}(t))$ is still represented in the space $X^{(p-1)}$,
leading to
\begin{align}
	\begin{split}
		\sum_{m=0}^{M^{(p)}} \Bigg[ \psi^{\ell,(p)}(t_{n+1}) & \psi^{m,(p)}(t_{n+1}) -\int_{t_n}^{t_{n+1}} \left(\frac{d}{dt}\psi^{\ell,(p)}(t)\right) \psi^{m,(p)}(t) dt   \Bigg]\uvec{u}^{m,(p)}-\psi^{\ell,(p)}(t_n)\uvec{u}_n \\
		&- \sum_{m=0}^{M^{(p-1)}} \left( \int_{t_n}^{t_{n+1}}  \psi^{\ell,(p)}(t)\psi^{m,(p-1)}(t)dt \right) \uvec{G}(t^{m,(p-1)},\uvec{u}^{m,(p-1)}) =\uvec{0}, \quad \ell=0,\dots,M^{(p)}.
	\end{split}
	\label{eq:weakproblemdiscreteL2}
\end{align}

System \eqref{eq:weakproblemdiscreteL2} can be written in matricial form as
\begin{equation}
	B^{(p)} \undu^{(p)} - \undr^{(p)} - \Delta t \Lambda^{(p,p-1)}  \undG(\undu^{(p-1)}) =\uvec{0},
	\label{eq:ADER_systemL2}
\end{equation}
where the matrix $\Lambda^{(p,p-1)}$ and the vector $\undr^{(p)}$ are defined by
\begin{align}
	\begin{split}
		\Lambda^{(p,p-1)}_{\ell,m}:&= \frac{1}{\Delta t}\int_{t_{n}}^{t_{n+1}} \psi^{\ell,(p)}(t)\psi^{m,(p-1)}(t) dt=\int_{0}^{1} \widehat{\psi}^{\ell,(p)}(\xi)\widehat{\psi}^{m,(p-1)}(\xi)d\xi,\quad \undr^{(p)}_\ell:=\psi^{\ell,(p)}(t_n)\uvec{u}_n.
	\end{split}
	\label{eq:ADER_structureL2}
\end{align}
Inverting $B^{(p)}$ and making use of Proposition \ref{prop:r}, we get
\begin{equation}
	\undu^{(p)}:=\undu_n^{(p)}+\Delta t \left( B^{(p)} \right)^{-1}\Lambda^{(p,p-1)}  \underline{\uvec{G}}(\undu^{(p-1)}).
	\label{eq:ADERL2}
\end{equation}
In this formulation, the embedding is naturally realized by the mismatch between the spaces of $\uvec{G}(t,\uvec{u}(t))$ and of the test functions, without any need for additional structures.

Again, following Remark \ref{rmk:last_iteration}, we underline that a final extra iteration of the standard method without embedding is used for a fixed final number of subtimenodes to get the optimal accuracy.

We show the equivalence of \ADERdu~and ADER-$L^2$ in the next theorem. However, despite being equivalent in the context of ADER methods for ODEs, the two modifications have a deeply different nature and, hence, their generalization to the context of ADER methods for hyperbolic PDEs leads to different families of schemes. This is why both the approaches have been described.

\begin{prop}[Equivalence between \ADERdu~and ADER-$L^2$]
	The methods \ADERdu~\eqref{eq:ADERdu} and ADER-$L^2$ \eqref{eq:ADERL2} are equivalent.
\end{prop}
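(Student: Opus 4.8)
The plan is to show that the two iteration maps \eqref{eq:ADERdu} and \eqref{eq:ADERL2} coincide as functions of the input $\undu^{(p-1)}$, and then to conclude by a trivial induction, since both schemes start from the same $\undu^{(0)}$ and perform the same first iteration (the plain ADER update, without embedding). Comparing the two update formulas, both have the form $\undu^{(p)} = \undu_n^{(p)} + \Delta t \left(B^{(p)}\right)^{-1} (\,\cdot\,) \, \underline{\uvec{G}}(\undu^{(p-1)})$, and the vectors $\undu_n^{(p)}$ and the factor $\left(B^{(p)}\right)^{-1}$ are identical in the two cases. The only difference is the matrix sandwiched between $\left(B^{(p)}\right)^{-1}$ and $\underline{\uvec{G}}(\undu^{(p-1)})$: in \ADERdu~it is $\Lambda^{(p)} H^{(p-1)}$, whereas in ADER-$L^2$ it is $\Lambda^{(p,p-1)}$. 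Hence the entire statement reduces to the single matrix identity
\[
	\Lambda^{(p)} H^{(p-1)} = \Lambda^{(p,p-1)}.
\]

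To prove this identity I would compute the $(\ell,m)$ entry of the left-hand side directly. Writing $\xi^{k,(p)}:=(t^{k,(p)}-t_n)/\Delta t$ for the rescaled subtimenodes of the $p$-th iteration, we have $H^{(p-1)}_{k,m}=\widehat{\psi}^{m,(p-1)}(\xi^{k,(p)})$, so, using the definition of $\Lambda^{(p)}$ and exchanging the finite sum with the integral,
\[
	\left(\Lambda^{(p)} H^{(p-1)}\right)_{\ell,m} = \int_0^1 \widehat{\psi}^{\ell,(p)}(\xi) \left[\sum_{k=0}^{M^{(p)}} \widehat{\psi}^{k,(p)}(\xi)\,\widehat{\psi}^{m,(p-1)}(\xi^{k,(p)})\right] d\xi.
\]
The crux is to recognize the bracketed sum as the Lagrange interpolant of the function $\widehat{\psi}^{m,(p-1)}$ on the $M^{(p)}+1$ subtimenodes of the $p$-th iteration. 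Since $\widehat{\psi}^{m,(p-1)}$ is a polynomial of degree $M^{(p-1)}=p-1$, while the interpolation space has degree $M^{(p)}=p \ge p-1$, the interpolation is exact and the bracket collapses to $\widehat{\psi}^{m,(p-1)}(\xi)$. Substituting back yields
\[
	\left(\Lambda^{(p)} H^{(p-1)}\right)_{\ell,m} = \int_0^1 \widehat{\psi}^{\ell,(p)}(\xi)\,\widehat{\psi}^{m,(p-1)}(\xi)\,d\xi = \Lambda^{(p,p-1)}_{\ell,m},
\]
which is precisely the desired identity.

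I do not anticipate any serious obstacle here; the proof is essentially the one computational step above. The only point requiring genuine care is the exactness of the interpolation, which hinges on the degree inequality $M^{(p-1)} \le M^{(p)}$ guaranteed by the choice $M^{(p)}=p$ for $p>1$ (this mirrors the exact-interpolation arguments already used in the proofs of Lemma~\ref{lem:conditionsRKC_ADER} and Theorem~\ref{th:link}). One should also verify the book-keeping at the base case: both methods share $\undu^{(0)}$ and the identical first iteration without embedding, so the identity of the iteration maps for $p>1$ propagates to equality of all iterates, and hence of the final approximations $\uvec{u}_{n+1}$.
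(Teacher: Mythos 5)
Your proposal is correct and follows essentially the same route as the paper: both reduce the equivalence to the single matrix identity $\Lambda^{(p)} H^{(p-1)} = \Lambda^{(p,p-1)}$ and prove it by recognizing the inner sum as the exact Lagrange interpolation of the lower-degree polynomial $\widehat{\psi}^{m,(p-1)}$ in the degree-$p$ space. Your handling of the base case and the final non-embedded iteration matches the paper's remarks as well.
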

\begin{proof}
	In both cases the first iteration is the one of the standard method with two subtimenodes, therefore, let us focus on the generic iteration $p>1$.
	The updates of the two methods, respectively given in \eqref{eq:ADERdu} and \eqref{eq:ADERL2}, are clearly equivalent if $\Lambda^{(p)}  H^{(p-1)}=\Lambda^{(p,p-1)}$.
	Exploiting the definition of $\Lambda^{(p)}  H^{(p-1)}$, we get
	\begin{align}
		\begin{split}
			\left(\Lambda^{(p)}  H^{(p-1)}\right)_{\ell,m}&=\sum_{k=1}^{M^{(p)}} \int_{0}^{1} \widehat{\psi}^{\ell,(p)}(\xi)\widehat\psi^{k,(p)}(\xi) d\xi \, \widehat\psi^{m,(p-1)}(\xi^{k,(p)})\\
			&=\int_{0}^{1}\widehat\psi^{\ell,(p)}(\xi) \left( \sum_{k=1}^{M^{(p)}}  \widehat\psi^{k,(p)}(\xi)\widehat\psi^{m,(p-1)}(\xi^{k,(p)}) \right)d\xi =\int_{0}^{1}\widehat\psi^{\ell,(p)}(\xi) \widehat\psi^{m,(p-1)}(\xi) d\xi,
		\end{split}
		\label{eq:intermediateequivalence}
	\end{align}
	which is the definition of $\Lambda^{(p,p-1)}_{\ell,m}$.
	In the previous computations, we have used the fact that the term in parenthesis is the exact interpolation of $\psi^{m,(p-1)}(t) \in \mathbb P _{p-1}$ into the polynomial space $\mathbb P_p$ defined by the Lagrange basis functions $\psi^{k,(p)}$ (modulo a remapping into $[0,1]$). 
	Clearly, a final iteration of the standard method in the spirit of Remark \ref{rmk:last_iteration}, to achieve the optimal accuracy, does not spoil the equivalence.
\end{proof}

\subsection{\ADERu, \ADERdu~and ADER-$L^2$ for other choices of subtimenodes}\label{sec:other_nodes}
As already said, the order $N$ of a standard ADER method for ODEs depends on the distribution of the adopted subtimenodes. 
The modified methods for a general distribution, e.g., GLB or GLG, with a fixed number of subtimenodes equal to $M+1$, are hence constructed as follows. 
We start with two subtimenodes and we proceed like described in the equispaced case: we perform the first iteration of the standard method and we continue with iterations of the modified method (\eqref{eq:ADERu}, \eqref{eq:ADERdu} or \eqref{eq:ADERL2}) increasing the number of subtimenodes until the iteration $p=M$ corresponding to $M+1$ subtimenodes and order $M$. At this point, we continue with $N-M$ iterations of the standard method to saturate the accuracy of the adopted distribution.

On the other hand, in order to reach a specific order $P$ of accuracy in the most efficient way, we select $M$ as the minimal integer such that $M+1$ subtimenodes guarantee the order of the ADER method to be $N\geq P$. 
This can be done for example for GLB nodes with $M=\left \lceil \frac{P}{2} \right \rceil$ and $M=\max{\left(\left \lceil \frac{P-1}{2} \right \rceil,1\right) }$ for GLG. 
Then, we perform $M$ iterations to reach $M+1$ subtimenodes and, further, we perform $P-M$ final iterations of the standard method to reach the desired accuracy.

We conclude this little section with some useful considerations.
First, it is mandatory to start with at least two subtimenodes because a single node (in principle admissible for GLG) would not be enough to guarantee, after the first iteration, a first order reconstruction of the numerical solution, hence, spoiling the accuracy.
Secondly, the embeddings must be performed starting already from the first iterations and it is not possible to postpone them after the saturation of the accuracy associated to an intermediate set of subtimenodes used along the iteration process.
The reason is given by the fact that the $p$-th embedding is only $p$-th order accurate, i.e., the interpolation with $p+1$ subtimenodes is associated to a local truncation error $O(\Delta t^{p+1})$, and using it at later iterations would spoil the accuracy.

\subsection{New adaptive ADER methods}
\label{sec:ADAPT}
The novel ADER methods can be easily further modified to design $p$-adaptive schemes.

From Theorem \ref{th:NEWDEC}, we have that the iteration process yields a numerical solution $\undu^{(p)}$, which gains one order of accuracy at each iteration, with no saturation due to a fixed operator $\lopdt^2$. Under smoothness assumptions on the exact solution, we can therefore perform the iteration process, increasing the number of subtimenodes and hence the order of accuracy, until convergence up to a user defined tolerance $\varepsilon$. If we define the approximation of $\uvec{u}_{n+1}$ obtained at the $p$-th iteration as $\uvec{u}_{n+1}^{(p)}:=\sum_{m=0}^{M^{(p)}} \uvec{u}^{m,(p)}\psi^{m,(p)}(t_{n+1})$, we can set as stopping criterion
\begin{equation}
	\frac{\norm{\uvec{u}_{n+1}^{(p)} - \uvec{u}_{n+1}^{(p-1)}}}{\norm{ \uvec{u}_{n+1}^{(p)}}}\leq \varepsilon.
	\label{eq:tolerance}
\end{equation} 
A remarkable aspect is that $p$-adaptivity is naturally embedded in the described formulation and that no effort, other than a simple check at each iteration, is required to implement such feature. 
This is an interesting aspect of the novel methods and of the general approach introduced in \cite{loredavide,DOOM}, which is particularly desirable in the context of real-world applications. 
Indeed, the final users want the error to be smaller than a predefined tolerance and to reduce the need for computational resources as much as possible. 
Per se, the adoption of high order methods does not guarantee the optimal balance. 
In fact, depending on the level of mesh refinement and on the prescribed tolerance, low order methods may actually be sufficient and cheaper than high order methods.
The proposed approach is able to automatically detect the required order of accuracy.
Furthermore, some of the authors are involved in other projects whose goal is to investigate such strategy in the context of non-smooth problems.

Let us finally remark that different criteria, other than convergence, can be chosen to halt the iterative process.  For example, in \cite{DOOM}, in a PDE context, the ADER iterations are stopped if the obtained numerical solution does not fulfill some physical constraints.

\section{ADER as Runge--Kutta methods}
\label{sec:RK}
In this section, we will show how the described ADER methods can be rewritten as explicit RK methods, i.e., in the form \eqref{eq:RK_ACCURACY_NODAL} with a strictly lower-triangular matrix $A$ of the coefficients $a_{s,r}$. We will define their Butcher tableaux and we will study their linear stability.

We introduce here the iteration-dependent vectors of the subtimenodes in the reference inteval $[0,1]$ as $\underline{\beta}^{(p)}:=\left( \xi^{0,(p)} , \dots, \xi^{M(p),{(p)}} \right)^T$ for the new modified methods, with $\xi^{m,{(p)}}:=\frac{t^{m,{(p)}}-t_n}{\Delta t}$.

In all cases, for the sake of efficiency, the first iteration is replaced by a simple Euler step: this reduces the number of stages associated to the first iteration and does not spoil the accuracy. In fact, the first iteration is supposed to provide a first order accurate approximation.

We construct the tableaux for a general distribution of subtimenodes. For a fixed accuracy order $P$, we consider $P$ iterations, assuming a final number of subtimenodes, which is chosen to be the minimal (and so the optimal) allowing to reach such accuracy, according to the theoretical analysis presented in Section~\ref{sec:analytical_results}, i.e., $M=P-1$ for equispaced subtimenodes, $M=\left \lceil \frac{P}{2} \right \rceil$ for GLB subtimenodes and $M=\max{\left(\left \lceil \frac{P-1}{2} \right \rceil,1\right) }$ for GLG subtimenodes.

The Butcher tableaux associated to the described ADER methods are reported in Tables \ref{tab:RKADER}, \ref{tab:RK_ADERu} and \ref{tab:RK_ADERdu}, while, the number of RK stages is reported in Tables \ref{tab:S_equispaced}, \ref{tab:S_GLB} and \ref{tab:S_GLG}.
The RK matrices $A$ of the ADER methods under investigation have a block-diagonal structure, with each block being associated to an ADER iteration. Some of the computed rows of the RK $A$ matrices have only zero elements, leading to ``ghost'' stages that do not contribute to the method. These rows have not been considered in the computation of the number of RK stages in Tables~\ref{tab:S_equispaced}, \ref{tab:S_GLB} and \ref{tab:S_GLG}.

In tables~\ref{tab:S_equispaced}, \ref{tab:S_GLB} and \ref{tab:S_GLG}, we report the \textit{theoretical speed-ups} of ADERu and ADERdu (equivalent to ADER-$L^2$) with respect to the original ADER method without interpolations, simply denoted as ``ADER''.
Further, for GLB and GLG subtimenodes, we also report the \textit{theoretical speed-ups} of ADER, ADERu and ADERdu with respect to the non-optimal ADER method using a number of subtimenodes equal to the desired order of accuracy, as in \cite{han2021dec,veiga2021arbitrary,velasco2022spectral,zanotti2015solving,fernandez2022arbitrary,rannabauer2018ader,dumbser2018efficient}, referring to such method as ``classical ADER'' (\cADER). Such method makes use of an unnecessarily large number of subtimenodes at each iteration, indeed, as proven in Section~\ref{sec:analytical_results}, GLB and GLG subtimenodes can achieve the same order with nearly half of the subtimenodes.

The \textit{theoretical speed-ups} are computed as the ratios between the number of RK stages of the reference methods over the ones of the novel methods. Therefore, a \textit{theoretical speed-up} larger than one implies that the investigated scheme is, in theory, faster than the reference one. 
In Section~\ref{sec:Numerics}, the \textit{theoretical speed-ups} will be compared with the \textit{numerical speed-ups}, defined as the ratios between the wall-clock computational times  of the numerical simulations. Even if other factors might come into play, e.g., memory ordering or floating point operations of matrix multiplications between ADER structures, the two quantities should be highly correlated.
This is also due to the fact that, for all the methods, all the matrices can be efficiently precomputed at the beginning of the simulation, as they do not depend on the specific time iteration.

\subsection{ADER}
We start by recalling the definition of the general ADER iteration \eqref{eq:ADER_DeC_iteration}
\begin{equation}
	\undu^{(p)}:=\undu_n+\Delta t B^{-1}\Lambda  \underline{\uvec{G}}(\undu^{(p-1)}).
	\label{eq:ADER_iteration_RK}
\end{equation}
Collecting all the iterations and identifying each state $\undu^{m,(p)}$, associated to the subtimenode $t^m$, of $\undu^{(p)}$ as a RK stage $\by^s$, we get the RK formulation \eqref{eq:RK_ACCURACY_NODAL} characterized by the Butcher tableau reported in Table \ref{tab:RKADER}. The vector $\widetilde{\uvec{b}}$ is defined as $\widetilde{\uvec{b}}^T:=\left(\underline{\widehat{\psi}}(1)\right)^T B^{-1}\Lambda$, with $\underline{\widehat{\psi}}(\xi):=(\widehat{\psi}^0(\xi),\dots,\widehat{\psi}^M(\xi))^T$, and keeps into account the final ADER iteration and the interpolation.
In fact, recalling that $\sum_{m=0}^M \widehat{\psi}^m \equiv 1$, we have
\begin{align}
	\uvec{u}_{n+1}&
	=\left(\underline{\widehat{\psi}}(1)\right)^T\undu^{(P)}
	=\left(\underline{\widehat{\psi}}(1)\right)^T\left[\undu_n+\Delta t B^{-1}\Lambda  \underline{\uvec{G}}(\undu^{(P-1)}) \right]
	=\uvec{u}_n+\Delta t \left(\underline{\widehat{\psi}}(1)\right)^T B^{-1}\Lambda  \underline{\uvec{G}}(\undu^{(P-1)}).
\end{align}
The number of stages is equal to $S=1+(P-1)(M+1)$ and is reported, for equispaced, GLB and GLG subtimenodes, in Tables \ref{tab:S_equispaced}, \ref{tab:S_GLB} and \ref{tab:S_GLG}, neglecting the ``ghost'' stages.
For \cADER~the same relation holds, but the number of subtimenodes is always $M+1$ with $M=P-1$ independently of the chosen subtimenodes.

We remark that, for equispaced subtimenodes, ADER and \cADER~coincide.
Instead, for GLB and GLG, the adoption of the optimal number of subtimenodes for a given order, i.e., the choice of ADER over cADER, determines a substantial decrease in the number of stages, even without extra interpolation processes.

\begin{table}
	\centering
	\begin{tabular}{|c||ccccccc|l|}
		\hline
		$\uvec{c}$& $\uvec{u}_n$ & $\bbu^{(1)}$& $\bbu^{(2)}$&$\bbu^{(3)}$& $\cdots$& $\bbu^{(P-2)}$& $\bbu^{(P-1)}$&A\\
		\hline
		$0$ & 0 & &&&&&&$\uvec{u}_n$\\
		$\vecbeta$& $\vecbeta$ & $\matz$ &&&&&&$\bbu^{(1)}$ \\
		$\vecbeta$& $\underline{0}$ & $B^{-1}\Lambda$ & $\matz$&&&&&$\bbu^{(2)}$\\
		$\vecbeta$& $\underline{0}$&  $\matz$& $B^{-1}\Lambda$ & $\matz$&&&&$\bbu^{(3)}$\\
		&$\vdots$   &$\vdots$ &&$\ddots$& $\ddots$&&& $\vdots$\\
		&$\vdots$   &$\vdots$ &&&$\ddots$& $\ddots$&& $\vdots$\\
		$\vecbeta$ &$\underline{0}$ &  $\matz$&$\cdots$&$\cdots$&$\matz$&$B^{-1}\Lambda$&$\matz$&$\bbu^{(P-1)}$\\ \hline \hline
		$\uvec{b}$& $0$ &  $\vecz$&$\cdots$&$\cdots$&$\cdots$&$\vecz$&$\widetilde{\uvec{b}}^T$&$\uvec{u}_{n+1}$\\ \hline
	\end{tabular}
	\caption{Butcher tableau of the original ADER method, $\uvec{c}$ at the left, $\uvec{b}$ at the bottom, $A$ in the middle. References to the stages are reported on top and on the right side \label{tab:RKADER}}
\end{table}

\begin{table}
	\centering
	\begin{tabular}{|c|c||c|c|c||c|c|} \hline 
		\multicolumn{2}{|c||}{Param} &\multicolumn{3}{c||}{RK Stages}&\multicolumn{2}{c|}{\cADER/ADER-speed-up}\\ \hline
		$P$ & $M$  & \cADER/ADER & \ADERu & \ADERdu  & \ADERu & \ADERdu   \\ \hline 
		2 & 1 & 2  & 2  & 2  & 1.000 & 1.000\\ \hline 
		3 & 2 & 6  & 6  & 4  & 1.000 & 1.500\\ \hline 
		4 & 3 & 12  & 11  & 7  & 1.091 & 1.714\\ \hline 
		5 & 4 & 20  & 17  & 11  & 1.176 & 1.818\\ \hline 
		6 & 5 & 30  & 24  & 16  & 1.250 & 1.875\\ \hline 
		7 & 6 & 42  & 32  & 22  & 1.312 & 1.909\\ \hline 
		8 & 7 & 56  & 41  & 29  & 1.366 & 1.931\\ \hline 
		9 & 8 & 73  & 51  & 37  & 1.431 & 1.973\\ \hline 
		10 & 9 & 90  & 62  & 46  & 1.452 & 1.957\\ \hline 
		11 & 10 & 111  & 74  & 56  & 1.500 & 1.982\\ \hline 
		12 & 11 & 133  & 87  & 67  & 1.529 & 1.985\\ \hline 
		13 & 12 & 156  & 101  & 79  & 1.545 & 1.975\\ \hline 
		14 & 13 & 183  & 116  & 92  & 1.578 & 1.989\\ \hline   
	\end{tabular}
	\caption{Number of stages $\SRK$ for various ADER with equispaced subtimenodes and \textit{theoretical speed-ups} with respect to ADER/\cADER~method  computed as the ratio of the number of RK stages of ADER method over the number of RK stages of the method of interest \label{tab:S_equispaced}}
\end{table}

\begin{table}
	\centering
	\resizebox{\columnwidth}{!}{
		\begin{tabular}{|c|c||c|c|c|c||c|c|c||c|c|} \hline 
			\multicolumn{2}{|c||}{Param} &\multicolumn{4}{c||}{RK Stages}&\multicolumn{3}{c||}{\cADER-speed-up}&\multicolumn{2}{c|}{ADER-speed-up}\\ \hline
			$P$ & $M$  & \cADER & ADER & \ADERu & \ADERdu& ADER  & \ADERu & \ADERdu  & \ADERu & \ADERdu   \\ \hline 
			2 & 1 & 2  & 2  & 2  & 2  & 1.000 & 1.000 & 1.000 & 1.000 & 1.000\\ \hline 
			3 & 2 & 6  & 6  & 6  & 4  & 1.000 & 1.000 & 1.500 & 1.000 & 1.500\\ \hline 
			4 & 2 & 12  & 9  & 9  & 7  & 1.333 & 1.333 & 1.714 & 1.000 & 1.286\\ \hline 
			5 & 3 & 20  & 16  & 15  & 11  & 1.250 & 1.333 & 1.818 & 1.067 & 1.455\\ \hline 
			6 & 3 & 30  & 20  & 19  & 15  & 1.500 & 1.579 & 2.000 & 1.053 & 1.333\\ \hline 
			7 & 4 & 43  & 30  & 27  & 21  & 1.433 & 1.593 & 2.048 & 1.111 & 1.429\\ \hline 
			8 & 4 & 56  & 35  & 32  & 26  & 1.600 & 1.750 & 2.154 & 1.094 & 1.346\\ \hline 
			9 & 5 & 73  & 48  & 42  & 34  & 1.521 & 1.738 & 2.147 & 1.143 & 1.412\\ \hline 
			10 & 5 & 91  & 54  & 48  & 40  & 1.685 & 1.896 & 2.275 & 1.125 & 1.350\\ \hline 
			11 & 6 & 110  & 71  & 60  & 50  & 1.549 & 1.833 & 2.200 & 1.183 & 1.420\\ \hline 
			12 & 6 & 133  & 78  & 67  & 57  & 1.705 & 1.985 & 2.333 & 1.164 & 1.368\\ \hline 
			13 & 7 & 156  & 96  & 81  & 69  & 1.625 & 1.926 & 2.261 & 1.185 & 1.391\\ \hline 
			14 & 7 & 183  & 104  & 89  & 77  & 1.760 & 2.056 & 2.377 & 1.169 & 1.351\\ \hline 
		\end{tabular}
	}
	\caption{Number of stages $\SRK$ for various ADER with GLB subtimenodes and \textit{theoretical speed-ups} with respect to \cADER~and to ADER computed as the ratio of the number of RK stages of the method of \cADER~or ADER, respectively, over the number of RK stages of the method of interest \label{tab:S_GLB}}
\end{table}

\begin{table}
	\centering
	\resizebox{\columnwidth}{!}{
		\begin{tabular}{|c|c||c|c|c|c||c|c|c||c|c|} \hline 
			\multicolumn{2}{|c||}{Param} &\multicolumn{4}{c||}{RK Stages}&\multicolumn{3}{c||}{\cADER-speed-up}&\multicolumn{2}{c|}{ADER-speed-up}\\ \hline
			$P$ & $M$  & \cADER & ADER & \ADERu & \ADERdu& ADER  & \ADERu & \ADERdu  & \ADERu & \ADERdu   \\ \hline 
			2 & 1 & 3  & 3  & 3  & 3  & 1.000 & 1.000 & 1.000 & 1.000 & 1.000\\ \hline 
			3 & 1 & 7  & 5  & 5  & 5  & 1.400 & 1.400 & 1.400 & 1.000 & 1.000\\ \hline 
			4 & 2 & 13  & 10  & 10  & 9  & 1.300 & 1.300 & 1.444 & 1.000 & 1.111\\ \hline 
			5 & 2 & 21  & 13  & 13  & 12  & 1.615 & 1.615 & 1.750 & 1.000 & 1.083\\ \hline 
			6 & 3 & 31  & 21  & 20  & 18  & 1.476 & 1.550 & 1.722 & 1.050 & 1.167\\ \hline 
			7 & 3 & 43  & 25  & 24  & 22  & 1.720 & 1.792 & 1.955 & 1.042 & 1.136\\ \hline 
			8 & 4 & 57  & 36  & 33  & 30  & 1.583 & 1.727 & 1.900 & 1.091 & 1.200\\ \hline 
			9 & 4 & 73  & 41  & 38  & 35  & 1.780 & 1.921 & 2.086 & 1.079 & 1.171\\ \hline 
			10 & 5 & 91  & 55  & 49  & 45  & 1.655 & 1.857 & 2.022 & 1.122 & 1.222\\ \hline 
			11 & 5 & 111  & 61  & 55  & 51  & 1.820 & 2.018 & 2.176 & 1.109 & 1.196\\ \hline 
			12 & 6 & 133  & 78  & 68  & 63  & 1.705 & 1.956 & 2.111 & 1.147 & 1.238\\ \hline 
			13 & 6 & 157  & 85  & 75  & 70  & 1.847 & 2.093 & 2.243 & 1.133 & 1.214\\ \hline 
			14 & 7 & 183  & 105  & 90  & 84  & 1.743 & 2.033 & 2.179 & 1.167 & 1.250\\ \hline 
		\end{tabular}
	}
	\caption{Number of stages $\SRK$ for various ADER with GLG subtimenodes and \textit{theoretical speed-ups} with respect to \cADER~and to ADER computed as the ratio of the number of RK stages of \cADER~or ADER, respectively, over the number of RK stages of   the method of interest  \label{tab:S_GLG}}
\end{table}

\subsection{\ADERu}
Again, we recall the update at the generic iteration $p>1$ characterized by interpolation of $\uvec{u}(t)$
\begin{align}
	&\begin{cases}
		\undu^{*(p-1)}:=H^{(p-1)}\undu^{(p-1)},\\
		\undu^{(p)}=\undu_n^{(p)}+\Delta t \left(B^{(p)} \right)^{-1}\Lambda^{(p)}  \underline{\uvec{G}}(\undu^{*(p-1)}).
	\end{cases}
	\label{eq:ADERu_RK}
\end{align}
It is possible to formulate the previous update in terms of the interpolated states by multiplying the second equation by $H^{(p)}$
\begin{align}
	\begin{split}
		\undu^{*(p)}&=H^{(p)}\undu^{(p)}=
		\undu_n^{(p+1)}+\Delta t H^{(p)}\left(B^{(p)} \right)^{-1}\Lambda^{(p)}  \underline{\uvec{G}}(\undu^{*(p-1)}),
	\end{split}
\end{align}
where we have exploited the fact that the sum of the elements in each row of the interpolation matrix $H^{(p)}$ is equal to $1$.
The Butcher tableau can be therefore constructed as in Table \ref{tab:RK_ADERu}.
The matrices $W^{(p)}$ are defined as
\begin{align}
	W^{(p)} : = \begin{cases}
		H^{(p)}\left(B^{(p)} \right)^{-1}\Lambda^{(p)} \in \R^{(p+2)\times(p+1)}, &\text{if } p =2, \dots, M-1,\\
		\left(B^{(M)} \right)^{-1}\Lambda^{(M)} \in \R^{(M+1)\times (M+1)}, & \text{if } p = M.
	\end{cases}
\end{align}

Notice that we do not have interpolation in the final iterations. We perform the interpolations in the early iterations in order to reach the needed number of subtimenodes that allows to get the desired order of accuracy $P$, then, from $\uvec{u}^{(M)}$ on, we keep iterating using the structures of the standard ADER method with $M+1$ subtimenodes. 
Vector $\widetilde{\uvec{b}}^{(M)}$ is, in fact, defined as $\widetilde{\uvec{b}}$ considering the final number of subtimenodes.
The number of stages is equal to $S=1+(P-1)(M+1) -\frac{(M-1)(M-2)}{2} $, i.e., $\frac{(M-1)(M-2)}{2}$ less with respect to ADER,  and is reported, for equispaced, GLB and GLG subtimenodes, in Tables \ref{tab:S_equispaced}, \ref{tab:S_GLB} and \ref{tab:S_GLG}. We report also the \textit{theoretical speed-up} factor with respect to ADER and \cADER.

\begin{table}
	\centering
	\begin{tabular}{|c||cccccccccccc|l|}
		\hline
		$\uvec{c}$& $\uvec{u}_n$ & $\!\!\bbu^{*(1)}\!\!$& $\!\!\bbu^{*(2)}\!\!$&$\!\!\bbu^{*(3)}\!\!$& $\!\!\cdots\!\!$& $\!\!\bbu^{*(M-2)}\!\!$& $\!\!\bbu^{*(M-1)}\!\!$& $\!\!\bbu^{(M)}\!\!$& $\!\!\bbu^{(M+1)}\!\!$& $\!\!\cdots\!\!$& $\!\!\bbu^{(P-2)}\!\!$& $\!\!\bbu^{(P-1)}\!\!$&A\\
		\hline
		$0$ & 0 & &&&&&&&&&&&$\bu_{n}$\\
		$\vecbeta^{(2)}$& $\vecbeta^{(2)}$ & $\matz$ &&&&&&&&&&&$\bbu^{*(1)}$ \\
		$\vecbeta^{(3)}$& $\!\!\underline{0}\!\!$ & $\!\!\!\!W^{(2)}\!\!\!\!$ & $\matz$&&&&&&&&&&$\bbu^{*(2)}$\\
		$\vecbeta^{(4)}$& $\underline{0}$&  $\matz$& $\!\!\!\!W^{(3)}\!\!\!\!$ & $\matz$&&&&&&&&&$\bbu^{*(3)}$\\
		&$\vdots$   &$\vdots$ &$\ddots$&$\ddots$& $\ddots$&&&&&&&&$\vdots$\\
		&$\vdots$   &$\vdots$ &&$\ddots$&$\ddots$& $\ddots$&&&&&&&$\vdots$\\
		$\vecbeta^{(M)}\!\!$ &$\!\!\underline{0}\!\!\!\!$ &  $\matz$&$\cdots$&$\cdots$&$\matz$ &$\!\!\!\!W^{(M-1)}\!\!\!\!$&$\matz$&&&&&&$\bbu^{*(M-1)}\!\!$\\
		$\vecbeta^{(M)}\!\!$&$\underline{0}$ &   $\matz$&$\cdots$&$\cdots$&$\cdots$&$\matz$&$\!\!\!\!W^{(M)}\!\!\!\!$&$\matz$&&&&&$\bbu^{(M)}$\\
		$\vecbeta^{(M)}\!\!$&$\underline{0}$ &   $\matz$&$\cdots$&$\cdots$&$\cdots$&$\cdots$&$\matz$&$\!\!\!\!W^{(M)}\!\!\!\!$&$\matz$&&&&$\bbu^{(M+1)}$\\
		$\vdots\!\!$&$\vdots$ &   $\vdots$&&&&&&$\ddots$&$\ddots$&$\ddots$&&&$\vdots$\\
		$\vdots\!\!$&$\vdots$ &   $\vdots$&&&&&&&$\ddots$&$\ddots$&$\ddots$&&$\vdots$\\
		$\vecbeta^{(M)}\!\!$&$\underline{0}$ &   $\matz$&$\cdots$&$\cdots$&$\cdots$&$\cdots$&$\cdots$&$\cdots$&&$\matz$&$\!\!\!\!W^{(M)}\!\!\!\!$&$\matz$&$\bbu^{(P-1)}$\\ \hline\hline
		$\uvec{b}$&$\!\!0$ &  $\vecz$&$\cdots$&$\cdots$&$\cdots$&$\cdots$&$\cdots$&$\cdots$&$\cdots$&$\cdots$&$\vecz$&$\!\!\!\!(\widetilde{\uvec{b}}^{(M)})^T\!\!$&$\bu_{n+1}\!\!$\\ \hline
	\end{tabular}
	\caption{Butcher tableau of the \ADERu~method, $\uvec{c}$ at the left, $\uvec{b}$ at the bottom, $A$ in the middle. References to the stages are reported on top and on the right side \label{tab:RK_ADERu}}
\end{table}

\subsection{\ADERdu~and ADER-$L^2$}
Since \ADERdu~and ADER-$L^2$ coincide in an ODE context, they are characterized by the same RK form.
In order to obtain it and to construct the associated Butcher tableau, we recall the general update \eqref{eq:ADERdu}
\begin{align}
	\undu^{(p)}&=\undu_n^{(p)}+\Delta t \left(B^{(p)} \right)^{-1}\Lambda^{(p)}  H^{(p-1)}\underline{\uvec{G}}(\undu^{(p-1)}),
\end{align}
leading to the Butcher tableau reported in Table \ref{tab:RK_ADERdu}.
The matrices $Z^{(p)}$ are defined as
\begin{align}
	Z^{(p)} : = \begin{cases}
		\left(B^{(p)} \right)^{-1}\Lambda^{(p)}H^{(p-1)} \in \R^{(p+1)\times p}, &\text{if } p =2, \dots, M,\\
		\left(B^{(M)} \right)^{-1}\Lambda^{(M)}   \in \R^{(M+1)\times (M+1)}, & \text{if } p=M+1.
	\end{cases}
\end{align}

Same considerations as the ones made in the context of the \ADERu~methods apply here: the final iterations are performed without interpolations to reach the desired accuracy.
The number of stages is equal to $S=1+(P-1)(M+1) -\frac{M(M-1)}{2}$, i.e., $\frac{M(M-1)}{2}$ less with respect to ADER. Again, it can be found for equispaced, GLB and GLG subtimenodes, in Tables \ref{tab:S_equispaced}, \ref{tab:S_GLB} and \ref{tab:S_GLG}, together with the \textit{theoretical speed-up} with respect to ADER and \cADER.

\begin{table}
	\centering
	\begin{tabular}{|c||cccccccccccc|l|}
		\hline
		$\uvec{c}$& $\uvec{u}_n$ & $\!\!\bbu^{(1)}\!\!$& $\!\!\bbu^{(2)}\!\!$&$\!\!\bbu^{(3)}\!\!$& $\!\!\cdots\!\!$& $\!\!\bbu^{(M-2)}\!\!$& $\!\!\bbu^{(M-1)}\!\!$& $\!\!\bbu^{(M)}\!\!$& $\!\!\bbu^{(M+1)}\!\!$& $\!\!\cdots\!\!$& $\!\!\bbu^{(P-2)}\!\!$& $\!\!\bbu^{(P-1)}\!\!$&A\\
		\hline
		$0$ & 0 & &&&&&&&&&&&$\bu_{n}$\\
		$\vecbeta^{(1)}$& $\vecbeta^{(1)}$ & $\matz$ &&&&&&&&&&&$\bbu^{(1)}$ \\
		$\vecbeta^{(2)}$& $\!\!\underline{0}\!\!$ & $\!\!\!\!Z^{(2)}\!\!\!\!$ & $\matz$&&&&&&&&&&$\bbu^{(2)}$\\
		$\vecbeta^{(3)}$& $\underline{0}$&  $\matz$& $\!\!\!\!Z^{(3)}\!\!\!\!$ & $\matz$&&&&&&&&&$\bbu^{(3)}$\\
		&$\vdots$   &$\vdots$ &$\ddots$&$\ddots$& $\ddots$&&&&&&&&$\vdots$\\
		&$\vdots$   &$\vdots$ &&$\ddots$&$\ddots$& $\ddots$&&&&&&&$\vdots$\\
		$\vecbeta^{(M-1)}\!\!$ &$\!\!\underline{0}\!\!\!\!$ &  $\matz$&$\cdots$&$\cdots$&$\matz$ &$\!\!\!\!Z^{(M-1)}\!\!\!\!$&$\matz$&&&&&&$\bbu^{(M-1)}\!\!$\\
		$\vecbeta^{(M)}\!\!$&$\underline{0}$ &   $\matz$&$\cdots$&$\cdots$&$\cdots$&$\matz$&$\!\!\!\!Z^{(M)}\!\!\!\!$&$\matz$&&&&&$\bbu^{(M)}$\\
		$\vecbeta^{(M)}\!\!$&$\underline{0}$ &   $\matz$&$\cdots$&$\cdots$&$\cdots$&$\cdots$&$\matz$&$\!\!\!\!Z^{(M+1)}\!\!\!\!$&$\matz$&&&&$\bbu^{(M+1)}$\\
		$\vdots\!\!$&$\vdots$ &   $\vdots$&&&&&&$\ddots$&$\ddots$&$\ddots$&&&$\vdots$\\
		$\vdots\!\!$&$\vdots$ &   $\vdots$&&&&&&&$\ddots$&$\ddots$&$\ddots$&&$\vdots$\\
		$\vecbeta^{(M)}\!\!$&$\underline{0}$ &   $\matz$&$\cdots$&$\cdots$&$\cdots$&$\cdots$&$\cdots$&$\cdots$&&$\matz$&$\!\!\!\!Z^{(M+1)}\!\!\!\!$&$\matz$&$\bbu^{(P-1)}$\\ \hline\hline
		$\uvec{b}$&$\!\!0$ &  $\vecz$&$\cdots$&$\cdots$&$\cdots$&$\cdots$&$\cdots$&$\cdots$&$\cdots$&$\cdots$&$\vecz$&$\!\!\!\!(\widetilde{\uvec{b}}^{(M)})^T\!\!$&$\bu_{n+1}\!\!$\\ \hline
	\end{tabular} 
	\caption{Butcher tableau of the \ADERdu~and ADER-$L^2$ method, $\uvec{c}$ at the left, $\uvec{b}$ at the bottom, $A$ in the middle. References to the stages are reported on top and on the right side \label{tab:RK_ADERdu}}
\end{table}

Before studying the linear stability of the methods, as an example and for the sake of completeness, we report, in Figure~\ref{fig:sparsity}, the sparsity pattern of the $A$ matrices of the \cADER, ADER, ADERu and ADERdu (equivalent to ADER-$L^2$) methods for order 7 and GLB subtimenodes. The block-structure of all matrices is essentially identical, with each block corresponding to one iteration, however, the sizes of the blocks differ a lot.
All the blocks in the \cADER~matrix are larger than the ones in the ADER matrix, as the latter method makes use of the optimal number of subtimenodes. Moreover, for both \cADER~and ADER, the size of the blocks is always the same, instead, for ADERu and ADERdu, the size of the blocks increases along the iterations, as a result of the interpolations.
We remark that the non-zero entries in the first column of the ADERdu matrix are determined by the ghost stages, corresponding, in practice, to the first stage $\uvec{u}_n$.

\begin{figure}
	\centering
	\newcommand{\order}{7}
	\includegraphics[width=0.24\textwidth, trim ={70 0 90 0}, clip]{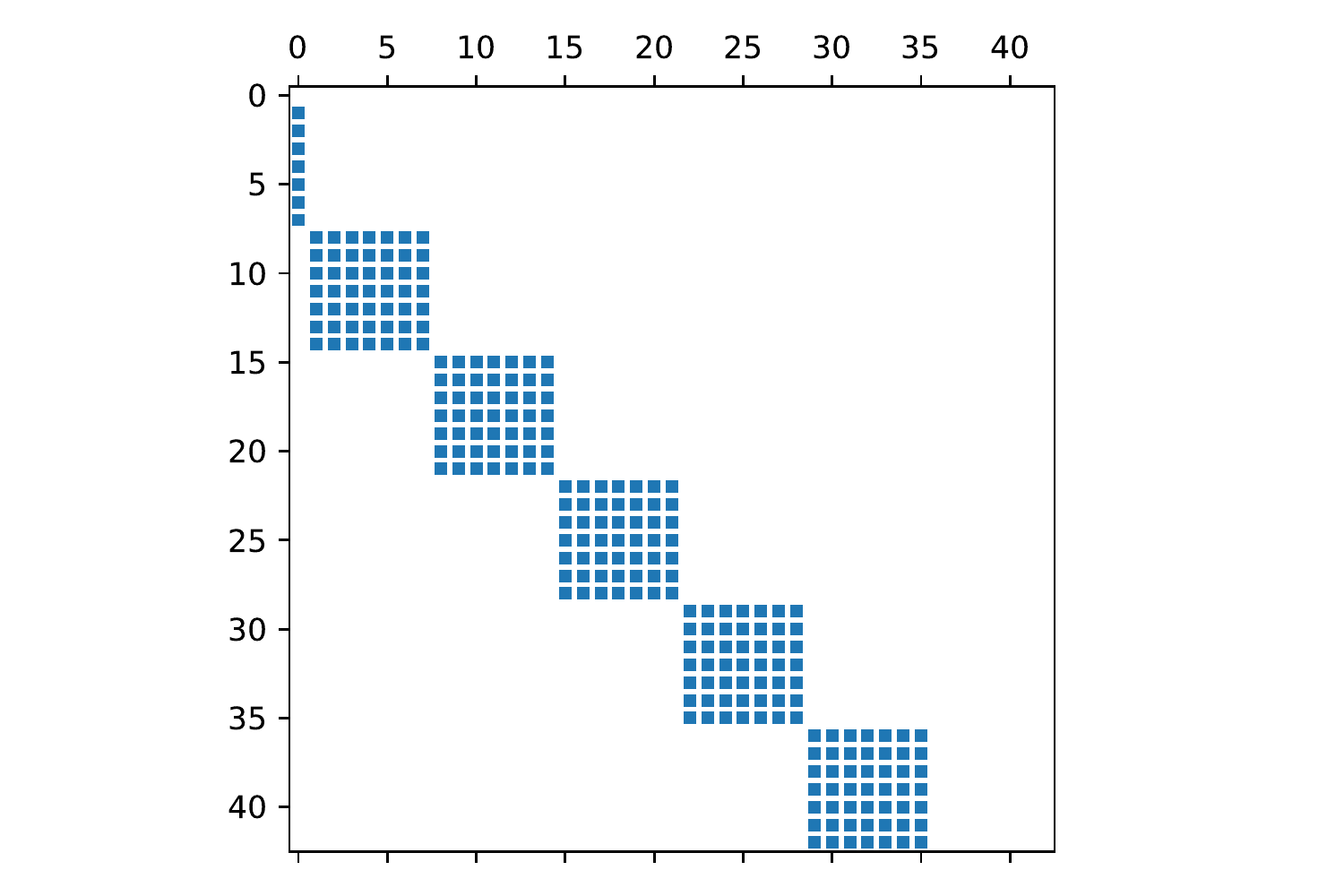}
	\includegraphics[width=0.24\textwidth, trim ={70 0 90 0}, clip]{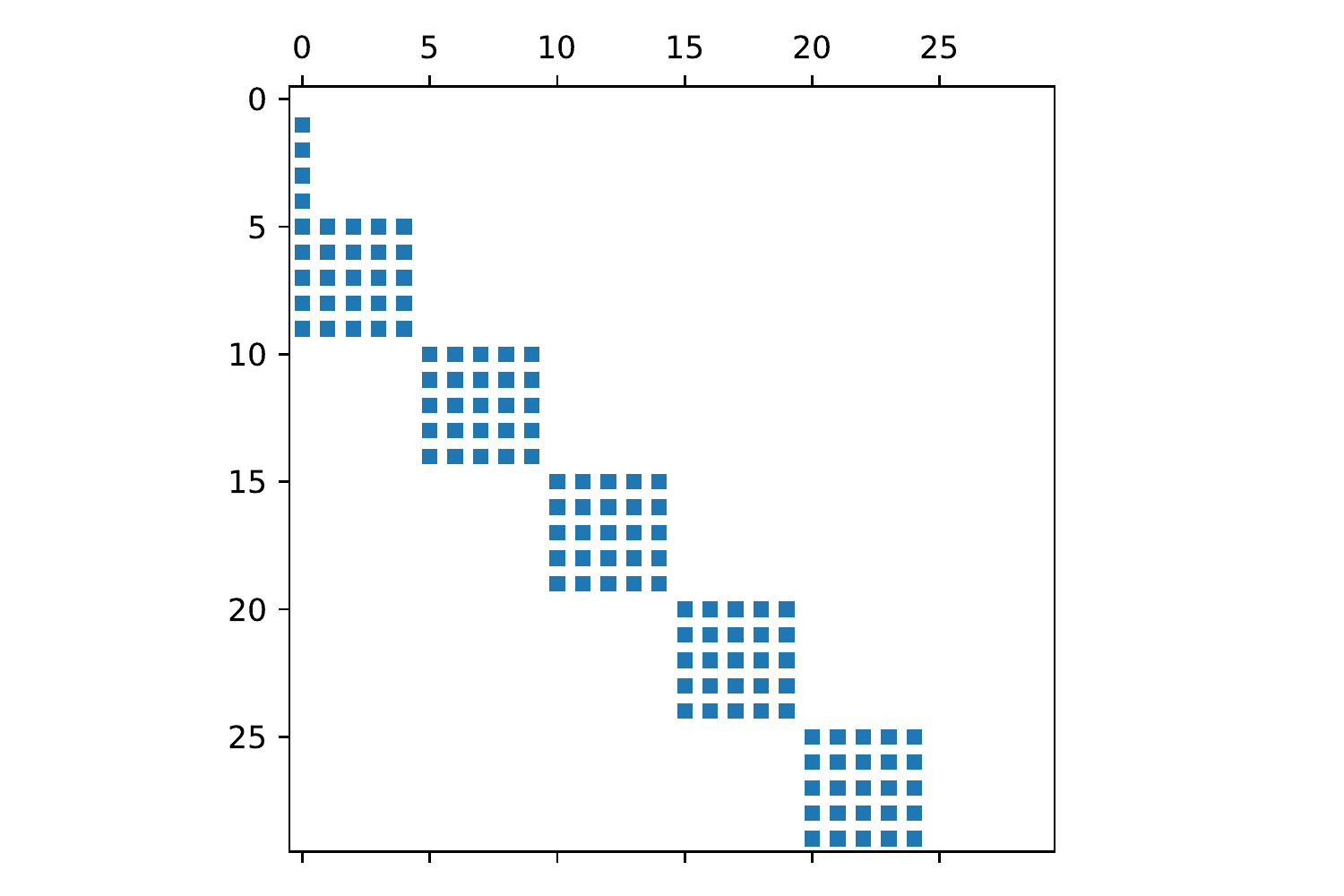}
	\includegraphics[width=0.24\textwidth, trim ={70 0 90 0}, clip]{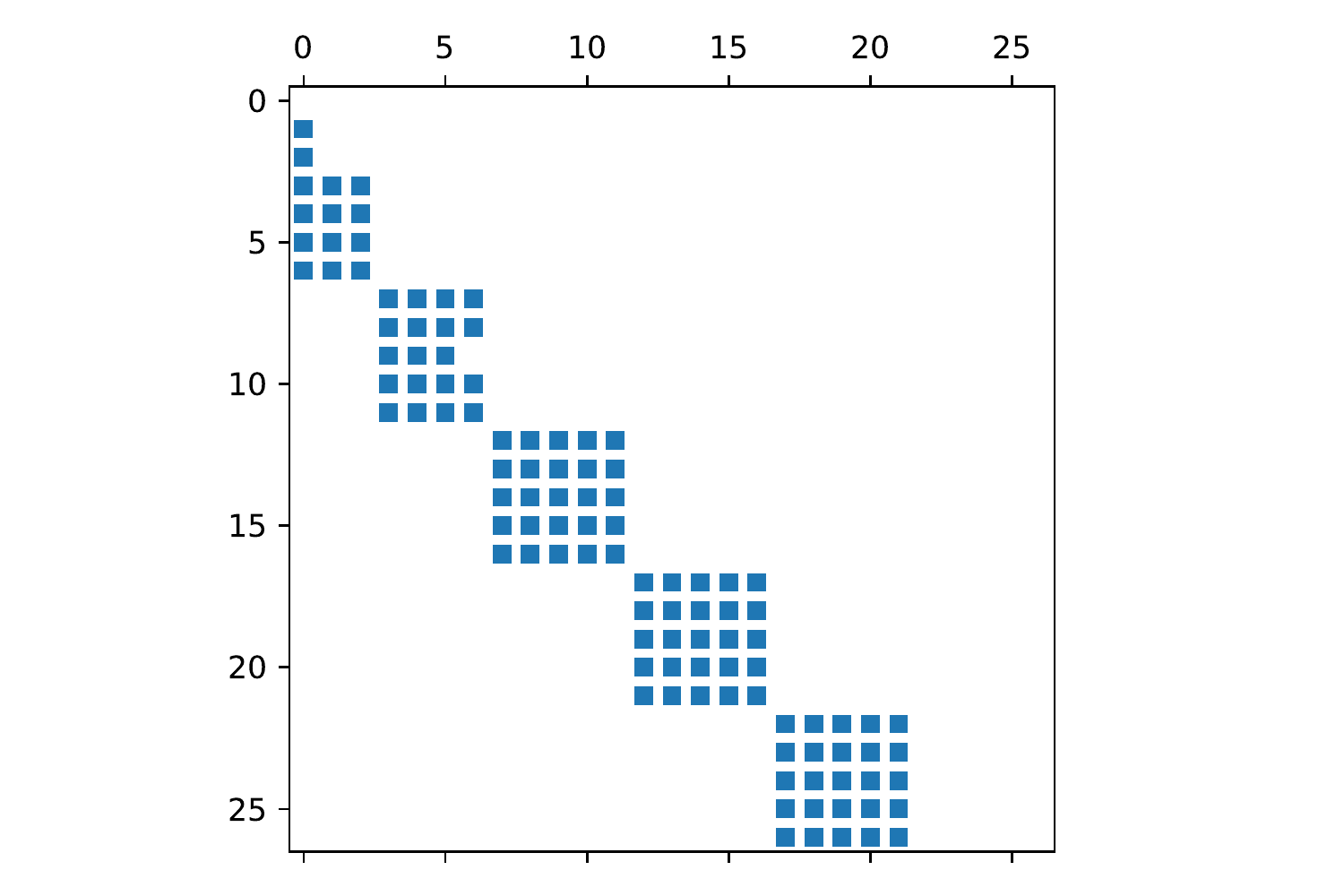}
	\includegraphics[width=0.24\textwidth, trim ={70 0 90 0}, clip]{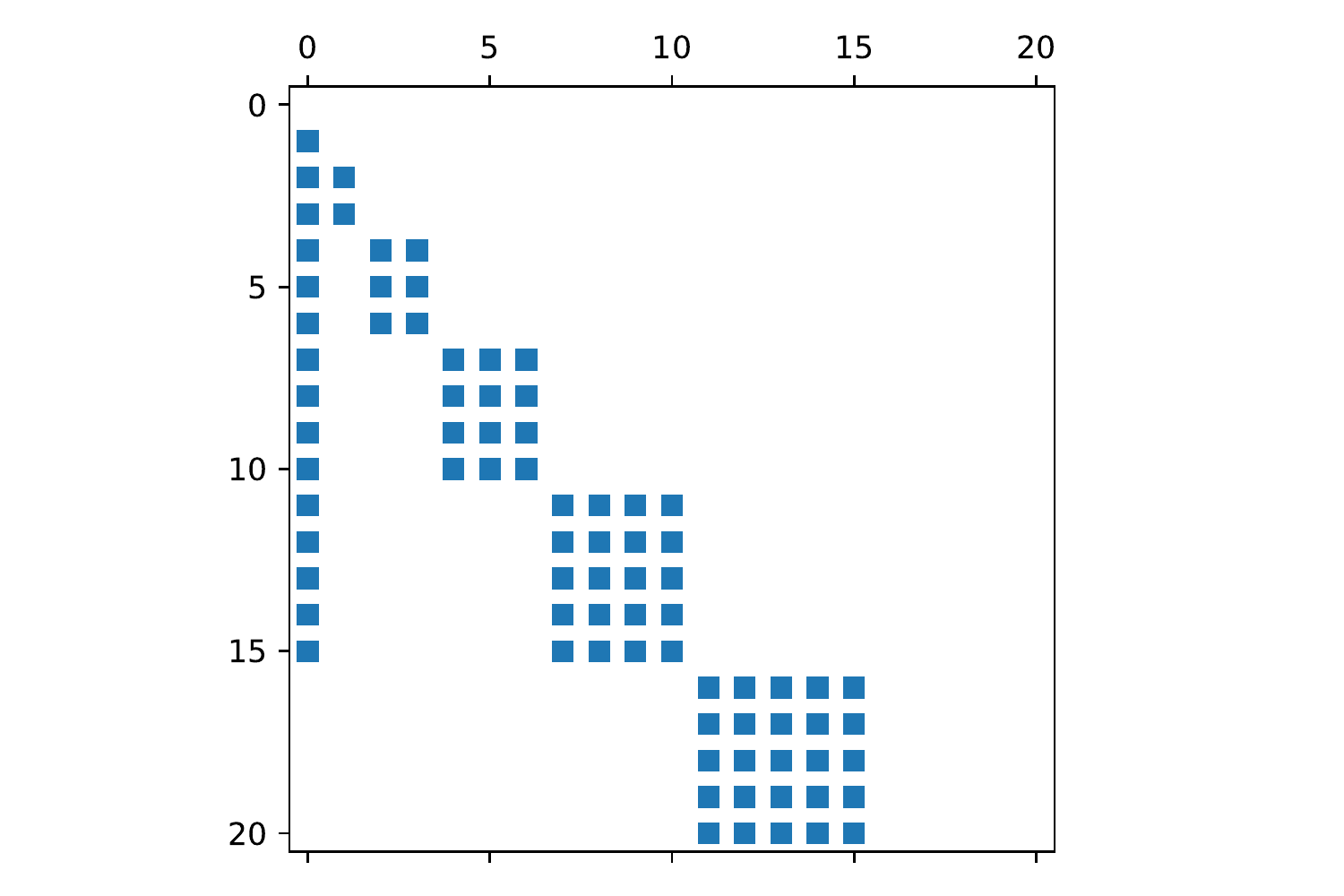}
	\caption{Sparsity pattern of the $A$ matrix with GLB subtimenodes and order \order. From left to right: \cADER, ADER, ADERu and ADERdu (equivalent to ADER-$L^2$). The references to the stages indices are reported on the left and on top. In \cADER~the blocks have size 7, in ADER they have size 5, in ADERu and ADERdu they have increasing sizes from 2 to 5}\label{fig:sparsity}
\end{figure}

\subsection{Linear stability}

The linear stability of a RK method is studied by analyzing the asymptotic behavior of the numerical solution of the method applied to Dahlquist's equation
$\frac{d}{dt}u(t)=\lambda u(t)$,
where $\lambda\in \mathbb{C}$ with $Re(\lambda)<0$. The linearity of the problem and of the method makes possible to express $u_{n+1}$ as
$
u_{n+1}=R(\lambda \dt) u_n,
$
where $R(\cdot)$ is the so-called stability function of the method.
In the context of the novel methods, the following result holds.
\begin{theorem}\label{th:bDeC_equivalence} 
	The stability function of any ADER, \ADERu~and \ADERdu~(and so ADER-$L^2$) method of order $P$ is
	\begin{equation}\label{eq:stab_func}
		R(z) = \sum_{r=0}^{P} \frac{z^r}{r!},
	\end{equation}
	independently of the distribution of the subtimenodes.
\end{theorem}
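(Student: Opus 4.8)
The plan is to restrict the three families to Dahlquist's equation, observe that the resulting stage recursion is a fixed, $z$-independent affine-linear map, so that $R$ is automatically a polynomial of degree at most $P$, and then pin down its coefficients using the order of the methods.

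First I would substitute $\bG(t,u)=\lambda u$ into the iterations and set $z:=\lambda\dt$. For the standard ADER iteration \eqref{eq:ADER_iteration_RK} this turns $\underline{\bG}(\undu^{(p-1)})$ into $\lambda\,\undu^{(p-1)}$, so that with $\undu^{(0)}:=\undu_n$ one obtains by induction the truncated Neumann series $\undu^{(P)}=\sum_{j=0}^{P}\left(zB^{-1}\Lambda\right)^{j}\undu_n$. Using $u_{n+1}=\left(\underline{\widehat{\psi}}(1)\right)^T\undu^{(P)}$ together with $\sum_m\widehat{\psi}^m\equiv 1$ and $\undu_n=u_n\uvec{1}$, this gives $R(z)=\sum_{j=0}^{P}z^{j}\left(\underline{\widehat{\psi}}(1)\right)^T(B^{-1}\Lambda)^{j}\uvec{1}$, a polynomial of degree at most $P$. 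For \ADERu~and \ADERdu~(hence ADER-$L^2$) the same substitution turns each iteration \eqref{eq:ADERu_RK}, \eqref{eq:ADERdu} into an affine-linear map $\undu^{(p)}=\undu_n^{(p)}+z\,M^{(p)}\undu^{(p-1)}$ with a $z$-independent matrix $M^{(p)}$ assembled from the (likewise $z$-independent) structure and interpolation matrices $B^{(p)},\Lambda^{(p)},H^{(p-1)}$. Since the recursion starts from a degree-zero iterate, includes the Euler first step, and each of the $P$ iterations raises the $z$-degree by exactly one, $u_{n+1}=R(z)u_n$ is again a polynomial of degree at most $P$; the embedding matrices, being independent of $z$, cannot inflate this degree.

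Next I would invoke the accuracy results: each method has been shown, through the DeC reading of Section~\ref{sec:DeC} and Theorem~\ref{th:NEWDEC}, to be of order $P$ after $P$ iterations. A one-step method of order $P$ applied to $\frac{d}{dt}u=\lambda u$, whose exact one-step propagator is $u(t_{n+1})=e^{z}u(t_n)$, must satisfy $R(z)=e^{z}+O(z^{P+1})$, because its local truncation error is $O(\dt^{P+1})=O(z^{P+1})$ for fixed $\lambda$. This forces the first $P+1$ Taylor coefficients of $R$ to coincide with those of the exponential, i.e. the coefficient of $z^{r}$ equals $\frac{1}{r!}$ for $r=0,\dots,P$. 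Combining the two observations, $R$ is a polynomial of degree at most $P$ whose coefficients up to order $P$ are prescribed to be $\frac{1}{r!}$, hence $R(z)=\sum_{r=0}^{P}\frac{z^{r}}{r!}$ with no further terms; as neither ingredient depends on the choice of subtimenodes, the conclusion does too.

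The main obstacle I anticipate is the degree bookkeeping for \ADERu~and \ADERdu: one must verify that the Euler first step and the successive embeddings raise the $z$-degree by exactly one per iteration and never by more, so that $P$ iterations cannot yield a stability polynomial of degree exceeding $P$; this is precisely where the $z$-independence of $H^{(p-1)}$, $B^{(p)}$ and $\Lambda^{(p)}$ is essential. As a consistency check for the standard method one may additionally verify, via condition $\Cp(S-1)$ of Lemma~\ref{lem:conditionsRKC_ADER} in the form $B^{-1}\Lambda\,\underline{\uvec{c}^{\,j-1}}=\frac1j\underline{\uvec{c}^{\,j}}$, that $\left(\underline{\widehat{\psi}}(1)\right)^T(B^{-1}\Lambda)^{j}\uvec{1}=\frac1{j!}$ for $j\le M$, recovering the low-order coefficients of $R$ by direct computation.
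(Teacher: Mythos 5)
Your proof is correct, but it takes a genuinely different route from the paper's. The paper does not write out an argument at all: it defers to the proof of Theorem 6.2 of \cite{loredavide}, which is an explicit computation exploiting the block-bidiagonal structure of the RK matrix $A$; that computation is not entirely trivial, because beyond iteration $M$ the condition $\Cp(S-1)$ no longer lets one propagate truncated exponentials exactly through the stage vectors, so one must track correction terms and use the quadrature conditions on the final update. You use the block structure only implicitly --- each iteration restricted to Dahlquist's equation is an affine-linear map with $z$-independent matrices, so after $P$ iterations $R$ is a polynomial of degree at most $P$ --- and then finish abstractly: since the methods are of order $P$ (which is both part of the hypothesis of the statement and established in the paper via the DeC reading and Theorem \ref{th:NEWDEC}), one has $R(z)-e^{z}=O(z^{P+1})$, which forces the coefficients of $z^{r}$ to equal $\tfrac{1}{r!}$ for $r=0,\dots,P$. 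This ``degree bound plus order'' argument is shorter, self-contained, and makes the independence of the subtimenode distribution transparent; its price is that it leans on the order-$P$ accuracy results rather than verifying the polynomial constructively, whereas the referenced computation is independent of those results. Two details are worth recording: (i) your Neumann-series formula for plain ADER agrees with the method as implemented with the Euler first step because $B^{-1}\Lambda\,\uvec{1}=\vecbeta$ (Proposition \ref{prop:HORK_condition}), and in any case only the degree bound, not the exact form of the coefficients, enters your argument; (ii) only ``the $z$-degree increases by at most one per iteration'' is needed, since exactness of the increase follows a posteriori from the nonvanishing of the coefficient $\tfrac{1}{P!}$.
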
  
The proof of the previous result is identical to the proof of Theorem 6.2 presented in \cite{loredavide} in the context of the study of the linear stability of the bDeC, bDeCu and bDeCdu methods and it is based on the particular block-structure of the RK matrix $A$ of the investigated methods. The reader is referred to \cite{loredavide} for further details.
An interesting consequence of Theorem \ref{th:bDeC_equivalence} is given by the following corollary.
\begin{corollary}\label{cor:bDeC_ADER_equivalence} 
	The bDeC, bDeCu, bDeCdu, ADER, \ADERu~and \ADERdu~(and so ADER-$L^2$) methods of order $P$ share the same stability function, independently of the distribution of the subtimenodes.
\end{corollary}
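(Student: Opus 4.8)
The plan is to obtain the corollary as an immediate consequence of Theorem~\ref{th:bDeC_equivalence} together with the analogous stability result already available for the bDeC family in \cite{loredavide}. Theorem~\ref{th:bDeC_equivalence} gives, for every ADER, \ADERu~and \ADERdu~(hence also ADER-$L^2$) method of order $P$, the explicit stability function $R(z)=\sum_{r=0}^{P} z^r/r!$, independently of the chosen subtimenodes. It therefore suffices to show that the bDeC, bDeCu and bDeCdu methods of order $P$ produce the very same polynomial; the coincidence of all the listed methods then follows trivially, since they all equal the degree-$P$ truncation of the exponential.

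For the bDeC family I would reproduce the mechanism behind Theorem~6.2 of \cite{loredavide}. Applying the scheme to Dahlquist's equation and setting $z=\lambda\dt$, one writes $u_{n+1}=R(z)u_n$. Each of these methods, once cast in RK form, is \emph{explicit}, so $R$ is a polynomial, and its $A$ matrix has the same block-bidiagonal structure displayed in Tables~\ref{tab:RKADER}, \ref{tab:RK_ADERu} and \ref{tab:RK_ADERdu}: the stages of iteration $p$ feed only into those of iteration $p+1$, with no coupling inside a single iteration block. Consequently, the longest chain of stages that can accumulate factors of $z$ has length equal to the number of iterations, namely $P$, so $\deg R\le P$. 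This structural fact is exactly what prevents the (much larger) total stage count from raising the degree of $R$.

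The final step combines degree and order. Since the methods are of order $P$, their stability function matches the exponential up to the $P$-th Taylor coefficient, i.e.\ $R(z)-e^{z}=O(z^{P+1})$, which pins down the coefficients of $1,z,\dots,z^{P}$ in $R$ to be $1,1,\tfrac12,\dots,\tfrac1{P!}$. A polynomial of degree at most $P$ with these prescribed coefficients is necessarily $\sum_{r=0}^{P}z^r/r!$. Thus the bDeC family shares the stability function of the ADER family, and the corollary follows.

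I expect the only delicate point to be the degree bound $\deg R\le P$: it is where the block-bidiagonal structure of $A$ is essential, since the naive bound coming from the stage count would only give $\deg R\le S$ with $S\gg P$. Everything else — writing $u_{n+1}=R(z)u_n$, invoking order $P$ to match the truncated exponential, and concluding — is routine once that structural observation is in place, and is precisely the argument transferred from \cite{loredavide}.
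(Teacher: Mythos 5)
Your proposal is correct and takes essentially the same route as the paper: the corollary is there presented as an immediate consequence of Theorem~\ref{th:bDeC_equivalence} (which gives $R(z)=\sum_{r=0}^{P}z^r/r!$ for ADER, \ADERu~and \ADERdu) combined with the analogous result for bDeC, bDeCu and bDeCdu from Theorem~6.2 of \cite{loredavide}. The only difference is that you unfold the mechanism of that cited theorem (the block sub-diagonal structure of $A$ forcing $\deg R\le P$, plus the order-$P$ condition $R(z)-e^z=O(z^{P+1})$ pinning the coefficients), whereas the paper simply invokes it by reference.
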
  
The methods mentioned in the previous result are therefore equivalent on linear problems and characterized by the same stability region.

The explicit characterization of the stability function can be used to find the largest $\Delta t$ for which the methods are stable. From classical RK analysis \cite{hairer1987solving}, one should determine the stability region from the stability function and choose $\Delta t$ in such a way that all $\Delta t  \lambda_i (\uvec{u})$ lie inside the stability region, with  $\lambda_i (\uvec{u})$ being the generic eigenvalue of the Jacobian matrix $\frac{\partial \uvec{G}}{\partial \uvec{u}}(\uvec{u})$.
We recall that the set of complex numbers $\mathcal{S}:=\lbrace z\in \mathbb C: \vert R(z) \vert<1\rbrace$ constitutes the stability region of the scheme.
For the simple scalar problem $\frac{d}{dt}u=-u$, the bounds for $\dt$, reported in Table~\ref{tab:dt_bounds}, guarantee the stability of the methods. Indeed, for systems with real eigenvalues, the same bounds rescaled by a factor $\frac{1}{ \rho (\uvec{u})}$, with $\rho(\uvec{u})$ being the spectral radius of $\frac{\partial \uvec{G}}{\partial \uvec{u}}(\uvec{u})$, guarantee (linear) stability. For more general systems with complex eigenvalues, the choice of $\Delta t$ is more difficult and one has to take into account the whole stability region and the distribution of the eigenvalues in the complex plane.
We remark that, since the methods are equivalent on linear problems, they are subjected to the same linear stability bounds. 

\begin{table}
	\centering
	\begin{tabular}{|c|c|}\hline
		Order & $\Delta t$\\ \hline
		1&2\\
		2&2\\
		3&2.51\\
		\hline
	\end{tabular}
	\begin{tabular}{|c|c|}\hline
		Order & $\Delta t$\\ \hline
		4&2.79\\
		5&3.22\\
		6&3.55\\
		\hline
	\end{tabular}
	\begin{tabular}{|c|c|}\hline
		Order & $\Delta t$\\ \hline
		7&3.95\\
		8&4.31\\
		9&4.70\\
		\hline
	\end{tabular}	\begin{tabular}{|c|c|}\hline
		Order & $\Delta t$\\ \hline
		10&5.07\\
		11&5.45\\
		12&5.85\\
		\hline
	\end{tabular}
	\caption{Stability bounds on $\Delta t$ for the scalar problem $\frac{d}{dt}u=-u$ for different orders for bDeC, bDeCu and bDeCdu. Up to a simple rescaling by $\frac{1}{\rho (\uvec{u})}$, with $\rho (\uvec{u})$ being the spectral radius of the Jacobian matrix $\frac{\partial \uvec{G}}{\partial \uvec{u}}(\uvec{u})$, the bounds can be assumed for well--posed ODEs with real eigenvalues}\label{tab:dt_bounds}
\end{table}

\section{Application to PDEs with Spectral Difference schemes}
\label{sec:SD}
In this section, we show how to apply ADER methods in a PDE context with the SD space discretization.
Let us consider the monodimensional hyperbolic PDE
\begin{equation}
	\frac{\partial}{\partial t}\uvec{u}(x,t)+\frac{\partial}{\partial x}\uvec{F}(\uvec{u}(x,t))=\uvec{0}, \quad (x,t)\in [x_L,x_R] \times \mathbb{R}^+_0,
	\label{eq:sys}
\end{equation}
where $\uvec{u}:[x_L,x_R] \times \mathbb{R}^+_0\rightarrow \mathbb{R}^Q$ is the unknown solution and $\uvec{F}:\mathbb{R}^{Q}\rightarrow \mathbb{R}^{Q}$ is the flux. 
We introduce a tessellation $\tess$ of $[x_L,x_R]$ with non-overlapping segments $K$ and we adopt a classical nodal DG discretization of the numerical solution. 
Thus, globally the solution is approximated as a discontinuous piecewise polynomial function in $(V_M)^Q$ where $V_M:=\left\lbrace g\in L^2(\Omega)~s.t.~g\vert_K\in \mathbb{P}_M(K)\right\rbrace$, leading to accuracy $M+1$ for sufficiently smooth solutions. 
Locally,  in each element $K$, we represent the solution by interpolating it in $M+1$ solution points $x_i^s$
\begin{align}
	\uvec{u}_h(x,t):=\sum_{i=1}^{M+1} \uvec{u}_{i}^{s}(t)\varphi_{i}^{s}(x), \quad \forall x \in K,
	\label{eq:uh_PDE}
\end{align}
where the functions $\varphi_{i}^{s}$ are the Lagrange polynomials of degree $M$ associated to the solution points $x_i^s$ and $\uvec{u}_{i}^{s}$ the time-dependent values in the same points. 
For the sake of compactness, the label $K$ on the local coefficients and basis functions is omitted.
In each element, out of the approximation \eqref{eq:uh_PDE}, we can reconstruct the flux by interpolating it in $M+2$ flux points, $x_i^f$. In particular, we set $\uvec{F}_{i}^{f}(t):=\uvec{F}(\uvec{u}_h(x_i^f,t))$ and we define
\begin{align}
	\uvec{F}_h(x,t):=\sum_{i=1}^{M+2} \uvec{F}_{i}^{f}(t)\varphi_{i}^{f}(x), \quad \forall x \in K.
	\label{eq:Fh_PDE}
\end{align}

In order to guarantee a coupling between the elements $K$ and to avoid the instabilities of central schemes, we include the extrema of each segment $K$ among the flux points $x_i^f$ and we use a numerical flux $\uvec{F}^{num}$, rather than a direct evaluation, to define the flux value $\uvec{F}_{i}^{f}(t)$ in such extrema, keeping into account the trace of $\uvec{u}_h$ from the neighboring segments. Hence, the approximated flux is given by
\begin{align}
	\uvec{F}_h(x,t):=\uvec{F}^{num}(x_1^f,t)\varphi_{1}^{f}(x)+\sum_{i=2}^{M+1} \uvec{F}_{i}^{f}(t)\varphi_{i}^{f}(x)+\uvec{F}^{num}(x_{M+2}^f,t)\varphi_{M+2}^{f}(x), \quad \forall x \in K,
	\label{eq:Fh_PDE_flux_num}
\end{align}
where $x_1^f$ and $x_{M+2}^f$ are the extrema of the cell $K$.

The semidiscretization of the SD method is finally obtained by imposing that the discretizations of the solution and of the flux satisfy the PDE \eqref{eq:sys} in each solution point 
\begin{align}
	\frac{\partial}{\partial t}\uvec{u}_i^s(t)+\frac{\partial}{\partial x}\uvec{F}_h(x_i^s,t)=\uvec{0},\quad \forall x_i^s\in K, \quad \forall K\in \tess,
	\label{eq:semidiscretizazion_SD}
\end{align}
which is a system of ODEs, in the unknowns $\uvec{u}_i^s$, that must be solved in time.  
System \eqref{eq:semidiscretizazion_SD} is in the form \eqref{eq:ODE} and can be therefore solved with the described explicit ADER methods.  We use a Courant--Friedrichs--Lewy condition adapted to the SD scheme, taken from the linear stability analysis presented in \cite{sd_cfl}, and compute the time step as
\begin{equation}
	\label{eq:sd-cfl}
	\Delta t = \frac{C}{M+1}\frac{\Delta x}{v_{max}},
\end{equation} 
for some $C \in (0,1]$, with $v_{max}$ being the maximum wave speed of the system in absolute value, i.e., the spectral radius of the Jacobian of $\uvec{F}$.

\begin{remark}[On the spatial coupling in SD methods]
	If we identify the solution points as degrees of freedom of classical finite element formulations, we have that the spatial coupling, in the context of SD methods, does not differ much from the one of a standard DG formulation, with the exception that no mass matrix is present in SD methods. 
	Indeed, the local interpolation of the flux determines the coupling of the degrees of freedom within each cell, while, the coupling between neighboring cells is guaranteed, as already remarked, by the inclusion of the extrema of each segment among the flux points and the adoption of a numerical flux to define the flux values there.
\end{remark}

We presented the SD method in a 1-dimensional setting but the extension to the multidimensional case is straightforward on Cartesian grids, applying the same arguments dimension by dimension, as proposed, \textit{inter alia}, in \cite{veiga2021arbitrary,velasco2022spectral}. 

\begin{figure}
	\begin{center}
		\includegraphics[width=0.6\textwidth, trim={0 12.4cm 0 7cm},clip]{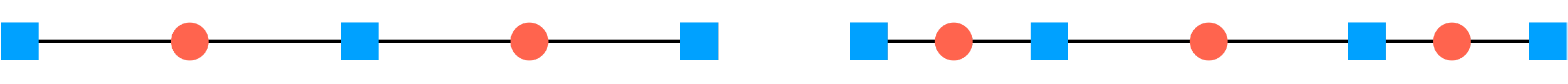}
		\includegraphics[width=0.18\textwidth, trim={17cm 9.2cm 5cm 10cm},clip]{figures/element_sd.pdf}
		\includegraphics[width=0.18\textwidth, trim={17cm 7.4cm 5cm 12.2cm},clip]{figures/element_sd.pdf}
	\end{center}
	\caption{SD element for second and third order in $1$-dimension}
	\label{fig:sd_element}
\end{figure}

The stability of SD methods has been shown to be independent of the choice of the solution and flux points under mild assumptions (i.e., solution points are located between flux points) in \cite{abeele2008}.
Further, it has been proven that there exist flux point placements for which the method is stable, both  for 1-dimensional intervals and Cartesian meshes.  
In \cite{jameson2010}, the stability of SD schemes is established when the interior flux collocation points are the zeros of the Legendre polynomials.  For further information on the SD-ADER scheme,  the reader is referred to \cite{veiga2021arbitrary,velasco2022spectral}.

\begin{remark}[CFL condition and convergence of the ADER iterative procedure]
	It is possible to notice a relation between the CFL condition \eqref{eq:sd-cfl} and the ADER convergence condition from Proposition \ref{prop:iterative_procedure}.
	Indeed, after linearization and after applying the Fourier transform, as done in \cite{sd_cfl}, we observe that the PDE can be rewritten as a system of ODEs like in \eqref{eq:ODE}.
	In particular, the Lipschitz-continuity constant of the resulting $\uvec{G}$ is proportional to $\frac{ v_{max} }{\Delta x}$, but it also depends on another coefficient, i.e., $M+1$ \cite{sd_cfl}.
	This is actually less restrictive with respect to classical finite element schemes, where the constant is proportional to $2M+1$.
	For practical purposes, we include also a safety factor $C \in \R^+$, keeping the $\Delta t$ a little lower than the theoretical bound, as $v_{max}$ is known only at time $t_n$ but not on the whole interval $[t_n,t_{n+1}]$.
\end{remark}

The treatment of shocks and oscillations is done through an \textit{a-posteriori} limiter described in \ref{app:limiter}.

\section{Numerical results}
\label{sec:Numerics}

In this section, we will numerically investigate the properties of the proposed improvements of ADER methods on several benchmarks both for ODEs and PDEs. 
Since the interpretation of ADER as DeC is not new \cite{han2021dec}, we do not investigate the \textit{numerical speed-up} coming from the adoption of the optimal number of iterations rather than solving iteratively the \ADERIWF~up to machine precision. Instead, we always assume a number of iterations equal to the desired order of accuracy and we focus on the impact of the modifications proposed in Sections \ref{sec:analytical_results} and \ref{sec:ADERNEW}, which are the main novelties of this work.
In particular, we study the \textit{numerical speed-up} of the methods with reduced number of GLB and GLG subtimenodes for a fixed order, according to the results presented in Section~\ref{sec:analytical_results}, simply indicated as ADER, with respect to cADER, characterized by a number of subtimenodes always equal to the desired order. 
Moreover, we investigate the \textit{numerical speed-ups} of the novel \ADERu~and \ADERdu~methods, and their adaptive versions, with respect to both ADER and cADER. We recall that \cADER~methods make use of a number of subtimenodes equal to desired order of accuracy. As proven in Section~\ref{sec:analytical_results}, such number is non-optimal for GLB and GLG subtimenodes.

The integral terms of the ADER structures are computed exactly for equispaced and GLG subtimenodes using the GLG quadrature formula, while for GLB subtimenodes we adopt the associated quadrature leading to an underintegrated diagonal matrix $\Lambda$.
This choices, for GLB and GLG subtimenodes, determine the high order implicit RK methods associated to the ADER methods to be respectively the \ADERRK-GLB and \ADERRK-GLG methods presented in Section~\ref{sec:analytical_results}. 
All the ADER matrices defined in the previous sections are precomputed at the beginning of the simulation, as they remain identical in every time step. This also holds for \ADERu~and \ADERdu, for which the structures change along the iterative procedure but do not depend on the specific time step.

Finally, since ADER, \ADERu~and \ADERdu~are equivalent for order 2 and since the focus of this work is on (arbitrary) high order,
we will investigate the methods from order 3 on.

\subsection{ODE tests}
In this section, we focus on two ODE benchmarks: a simple linear system, which allows to verify the equivalence result presented in Theorem \ref{th:bDeC_equivalence}, and a more involved problem, the \CC~test presented in \cite{enright1987two}, which allows to assess the performance of the methods in the context of real applications.

\subsubsection{Linear system}
\newcommand{\test}{linear_system2}
\newcommand{\testname}{Linear system}
The linear system under investigation reads
\begin{equation}
	\begin{cases}
		u' = -5u+v\\
		v' = 5u-v
	\end{cases}, \qquad \begin{pmatrix}
		u_0\\v_0
	\end{pmatrix} = \begin{pmatrix}
		0.9\\0.1
	\end{pmatrix},
\end{equation}
and the exact solution is given by $u(t)= u_0 + (1-e^{-6t}) (-5u_0+v_0)$ and $v(t)=1-u(t)$. 
The problem is indeed very simple and, in fact, it has been chosen for the main purpose of verifying the analytical result summarized in Theorem \ref{th:bDeC_equivalence}, i.e., the equivalence on linear systems of ADER, \ADERu~and \ADERdu~of a given order independently of the adopted subtimenodes. 
In particular, due to the independence of the choice of subtimenodes, the same result applies to the \cADER, from which we expect the same behavior as for the other schemes for any order.
The final time is set to be $T=1$.

The results of the convergence analysis are displayed in Figure~\ref{fig:convergence_linear_system}. The expected order of accuracy is obtained for all the methods and also the expected equivalence on linear problems is numerically confirmed.
The error against the computational time is reported in Figure~\ref{fig:conv_time_linear_system}. 
The adoption of the optimal number of subtimenodes is associated to a clear computational advantage,  with the errors of ADER being always much smaller than \cADER~ones for a given computational time.  Moreover, the novel \ADERu~and \ADERdu~methods are definitely faster than ADER in the context of equispaced subtimenodes. The computational performance of \ADERu~is slightly worse than ADER for GLB and GLG subtimenodes, while, \ADERdu~guarantees a computational advantage with respect to ADER for GLB subtimenodes and has the same computational performance as ADER for GLB subtimenodes.

The adaptive versions of \ADERu~and \ADERdu, respectively in black and gray, have been tested with the convergence criterion \eqref{eq:tolerance} and a tolerance $\varepsilon=10^{-8}$.
The methods are able to automatically choose the order of accuracy to fulfill the desired error condition, as it can be observed by the error lines that lie uniformly below the tolerance level in Figure~\ref{fig:convergence_linear_system}.
For the prescribed tolerance, their performances are similar to the ones of very high order schemes, as can be seen in Figure~\ref{fig:conv_time_linear_system}, with error lines approaching the Pareto front for big time steps.
In Figure~\ref{fig:p_adaptive_linear}, we observe how the average number of iterations required to achieve the expected accuracy changes with respect to $\Delta t$. 
As expected, larger time steps are associated to a higher number of iterations to reach the convergence tolerance.
The small standard deviation in the number of iterations means that, on average, fixed tolerance $\varepsilon$ and time step $\Delta t$ correspond to a fixed order.

\begin{figure}
	\newcommand{\quadrature}{equispaced}
	\includegraphics[width=0.28\textwidth, trim={24 0 120 0}, clip]	{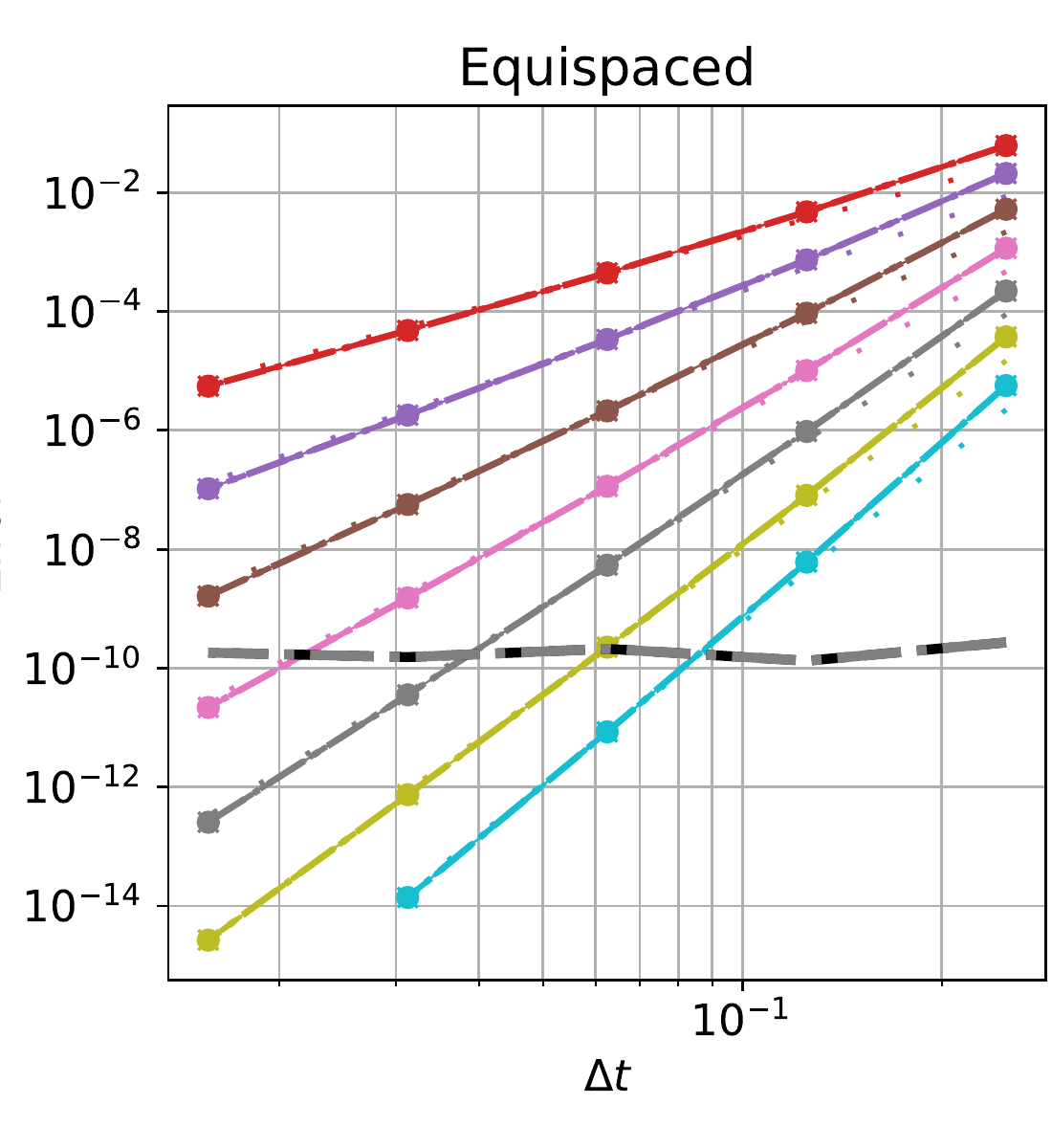}
	\renewcommand{\quadrature}{gaussLobatto}
	\includegraphics[width=0.28\textwidth, trim={24 0 120 0}, clip]{figures/adaptive/convergence_ADER_wc_adaptive_\quadrature_staggered_\test.pdf}
	\renewcommand{\quadrature}{gaussLegendre}
	\includegraphics[width=0.28\textwidth, trim={24 0 120 0}, clip]{figures/adaptive/convergence_ADER_wc_adaptive_\quadrature_staggered_\test.pdf}
	\includegraphics[width=0.12\textwidth, trim={340 20 0 40}, clip]{figures/adaptive/convergence_ADER_wc_adaptive_\quadrature_staggered_\test.pdf}
	\caption{\testname: Error decay for various methods and orders. The ``ref'' line is the reference for every order of accuracy
	}
	\label{fig:convergence_linear_system}
\end{figure}

\begin{figure}
	\newcommand{\quadrature}{equispaced}
	\includegraphics[width=0.28\textwidth, trim={24 0 120 0}, clip]	{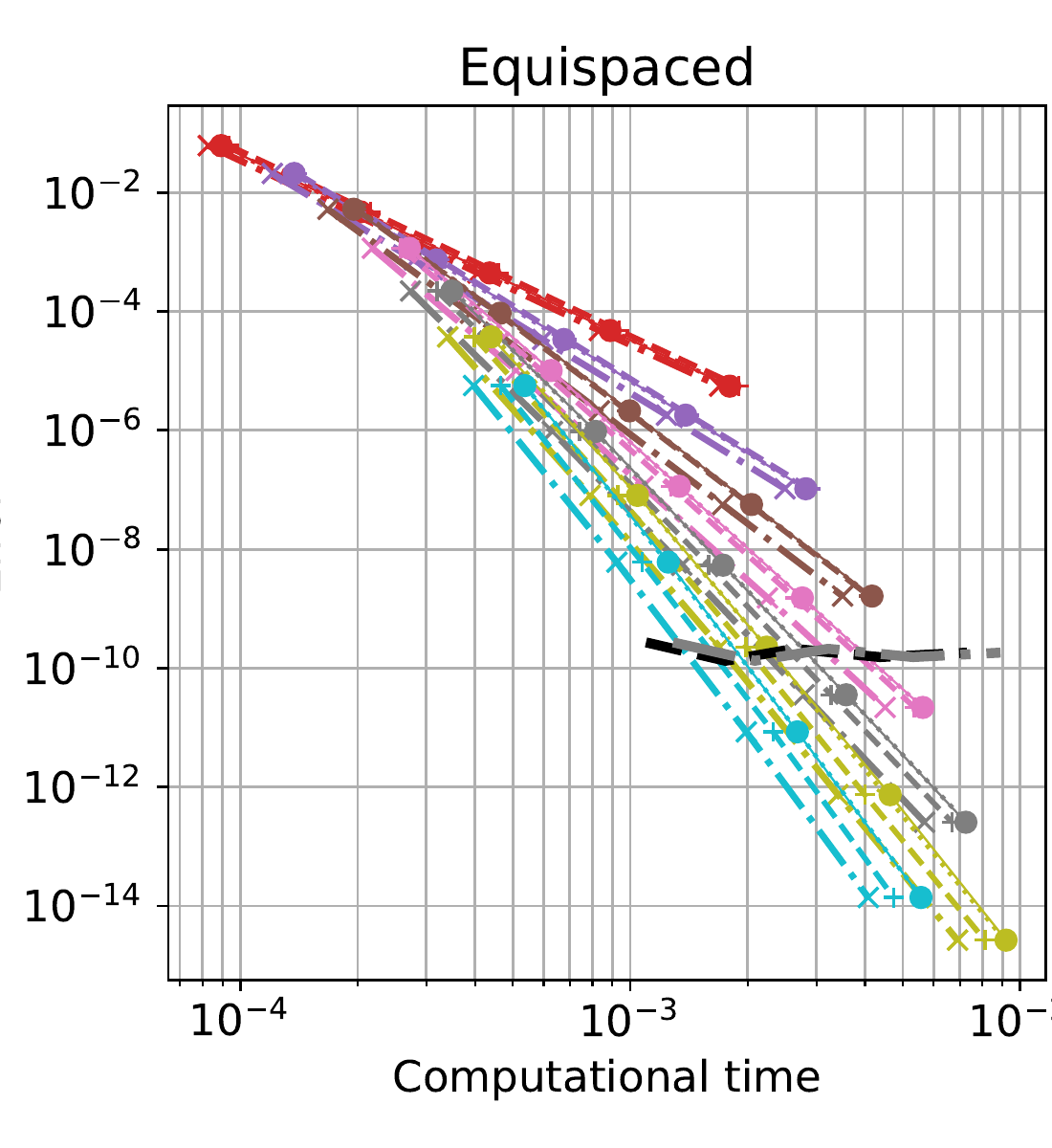}
	\renewcommand{\quadrature}{gaussLobatto}
	\includegraphics[width=0.28\textwidth, trim={24 0 120 0}, clip]{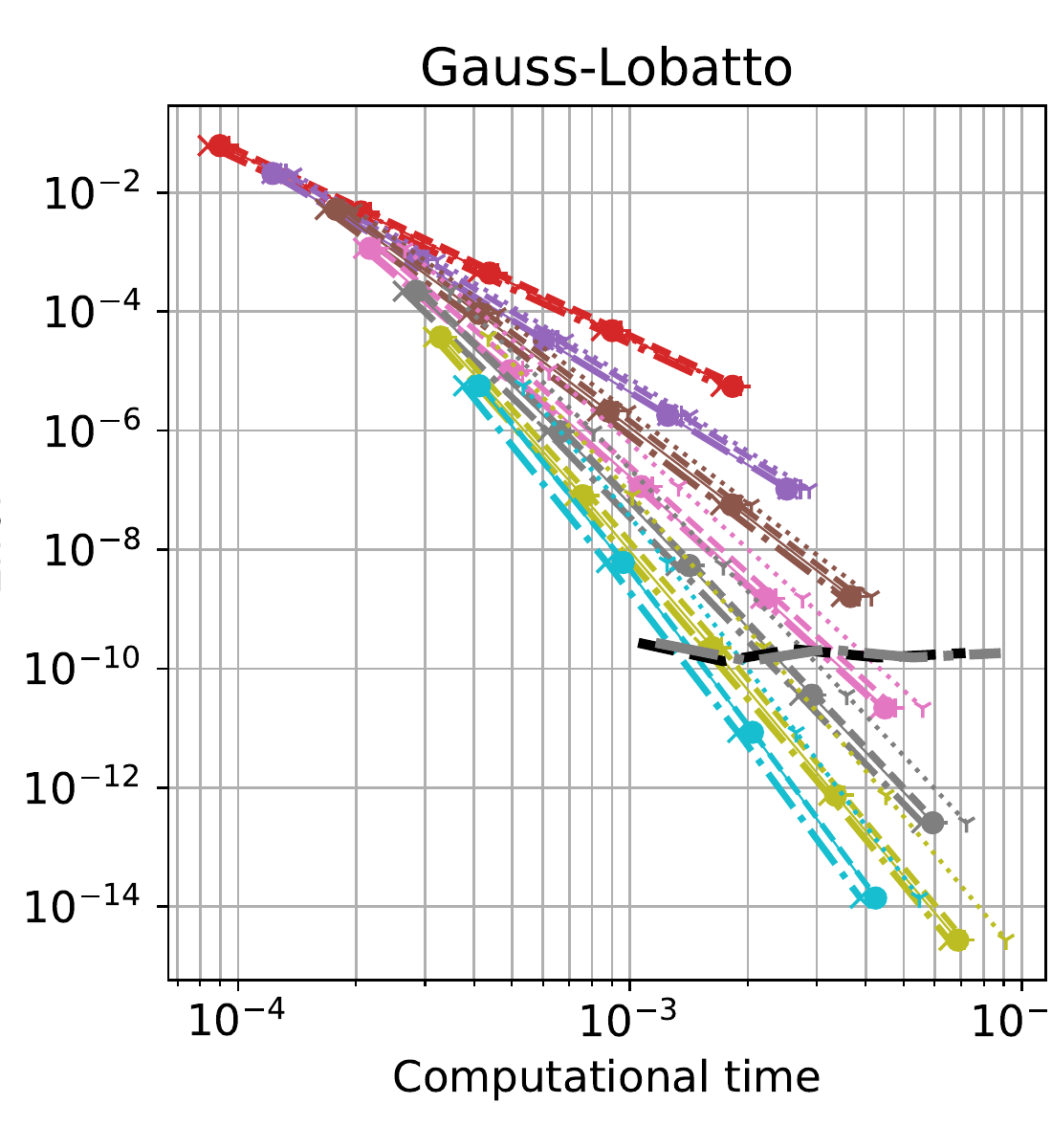}
	\renewcommand{\quadrature}{gaussLegendre}
	\includegraphics[width=0.28\textwidth, trim={24 0 120 0}, clip]{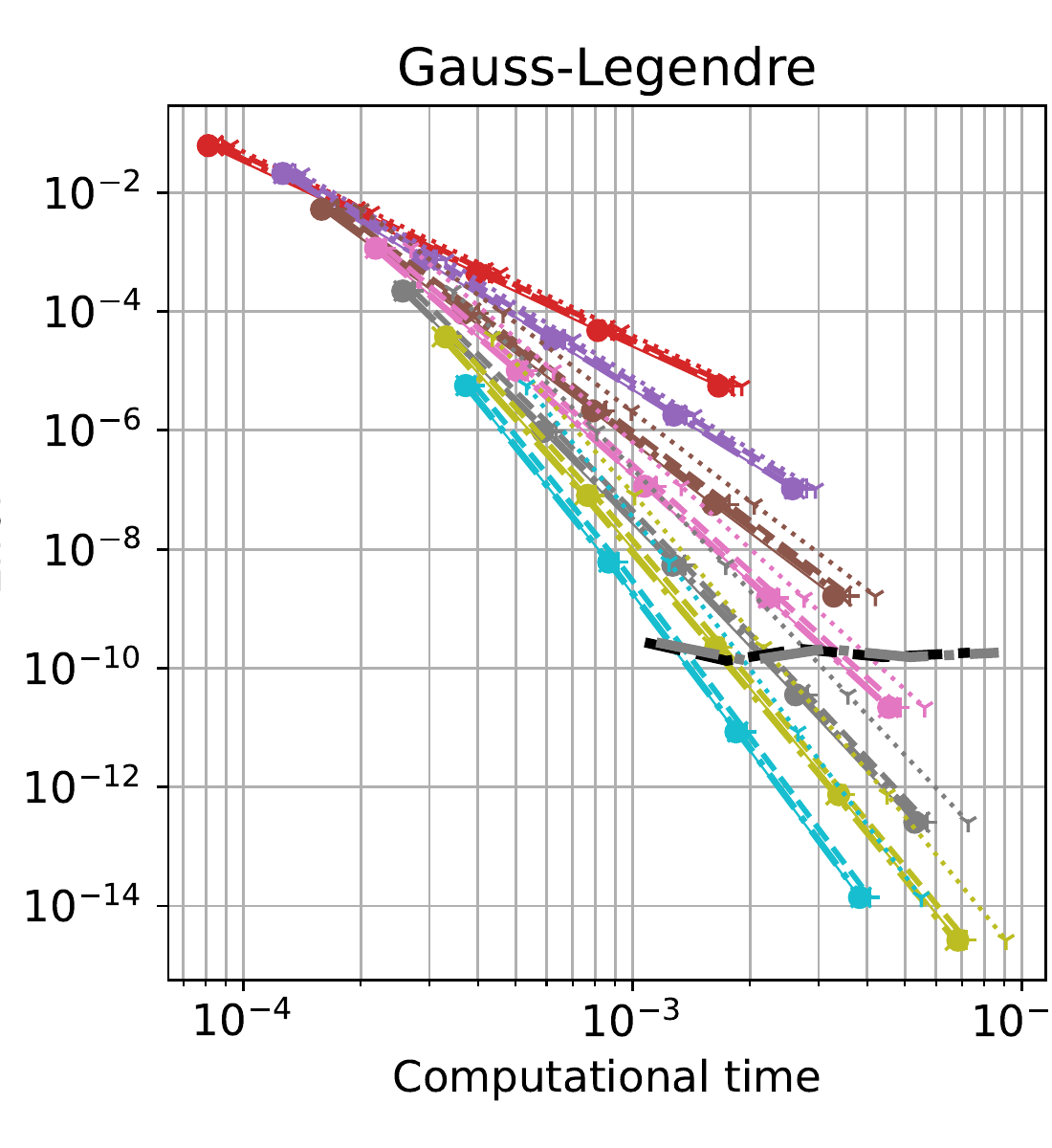}
	\includegraphics[width=0.12\textwidth, trim={343 30 0 40}, clip]	{figures/adaptive/convergence_vs_time_ADER_wc_adaptive_gaussLegendre_staggered_\test.pdf}
	\caption{\testname: Error with respect to computational time.
	}
	\label{fig:conv_time_linear_system}
\end{figure}

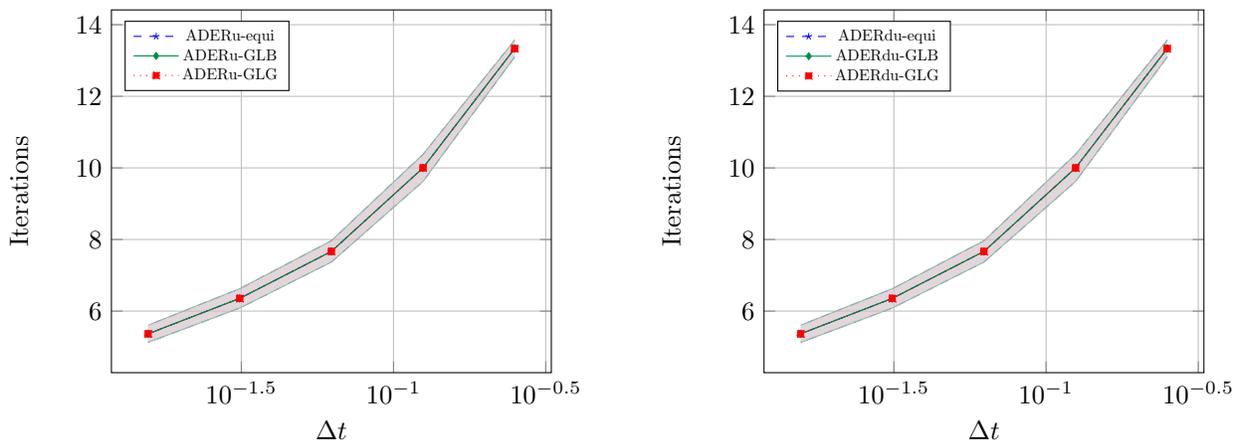
\begin{figure}
	\centering
	\begin{tikzpicture}
		\begin{axis}[
			xmode=log,
			grid=major,
			xlabel={$\dt$},
			ylabel={Iterations},
			legend pos=north west,
			legend style={nodes={scale=0.6, transform shape}},
			width=.45\textwidth
			]					
			\newcommand{\method}{ADER_u_equispaced}
			\addplot[mark=star,dashed, mark size=1.3pt,blue] table [x=dt, y=p average\method, col sep=comma] {figures/adaptive/convergence_ADER_adaptive_equispaced_\test.csv};
			\addlegendentry{ADERu-equi};

			\renewcommand{\method}{ADER_u_gaussLobatto}
			\addplot[mark=diamond*,solid,mark size=1.3pt,darkspringgreen] table [x=dt, y=p average\method, col sep=comma] {figures/adaptive/convergence_ADER_adaptive_gaussLobatto_\test.csv};
			\addlegendentry{ADERu-GLB};

			\renewcommand{\method}{ADER_u_gaussLegendre}
			\addplot[mark=square*,dotted,mark size=1.3pt,red] table [x=dt, y=p average\method, col sep=comma] {figures/adaptive/convergence_ADER_adaptive_gaussLegendre_\test.csv};
			\addlegendentry{ADERu-GLG};

			\renewcommand{\method}{ADER_u_equispaced}
			\addplot[name path=us_top,dashed,blue!50]   table [x=dt, y expr=\thisrow{p average\method}+0.5*\thisrow{p std\method}, col sep=comma  ]{figures/adaptive/convergence_ADER_adaptive_equispaced_\test.csv};
			\addplot[name path=us_bot,dashed,blue!50]   table [x=dt, y expr=\thisrow{p average\method}-0.5*\thisrow{p std\method}, col sep=comma  ]{figures/adaptive/convergence_ADER_adaptive_equispaced_\test.csv};	
			\addplot[blue!30,fill opacity=0.3] fill between[of=us_top and us_bot];	
			
			\renewcommand{\method}{ADER_u_gaussLobatto}
			\addplot[mark=diamond*,solid,mark size=1.3pt,darkspringgreen] table [x=dt, y=p average\method, col sep=comma] {figures/adaptive/convergence_ADER_adaptive_gaussLobatto_\test.csv};
			\addplot[name path=us_top,solid,darkspringgreen!50]   table [x=dt, y expr=\thisrow{p average\method}+0.5*\thisrow{p std\method}, col sep=comma  ]{figures/adaptive/convergence_ADER_adaptive_gaussLobatto_\test.csv};
			\addplot[name path=us_bot,solid,darkspringgreen!50]   table [x=dt, y expr=\thisrow{p average\method}-0.5*\thisrow{p std\method}, col sep=comma  ]{figures/adaptive/convergence_ADER_adaptive_gaussLobatto_\test.csv};
			\addplot[darkspringgreen!30,fill opacity=0.3] fill between[of=us_top and us_bot];

			\renewcommand{\method}{ADER_u_gaussLegendre}
			\addplot[mark=square*,dotted,mark size=1.3pt,red] table [x=dt, y=p average\method, col sep=comma] {figures/adaptive/convergence_ADER_adaptive_gaussLegendre_\test.csv};
			\addplot[name path=us_top,dotted,red!50]   table [x=dt, y expr=\thisrow{p average\method}+0.5*\thisrow{p std\method}, col sep=comma  ]{figures/adaptive/convergence_ADER_adaptive_gaussLegendre_\test.csv};
			\addplot[name path=us_bot,dotted,red!50]   table [x=dt, y expr=\thisrow{p average\method}-0.5*\thisrow{p std\method}, col sep=comma  ]{figures/adaptive/convergence_ADER_adaptive_gaussLegendre_\test.csv};
			\addplot[red!30,fill opacity=0.3] fill between[of=us_top and us_bot];	
			
		\end{axis}
	\end{tikzpicture}
	\hfill
	\begin{tikzpicture}
		\begin{axis}[
			xmode=log,
			grid=major,
			xlabel={$\dt$},
			ylabel={Iterations},
			legend pos=north west,
			legend style={nodes={scale=0.6, transform shape}},
			width=.45\textwidth
			]					
			\newcommand{\method}{ADER_L2_equispaced}
			\addplot[mark=star,dashed, mark size=1.3pt,blue] table [x=dt, y=p average\method, col sep=comma] {figures/adaptive/convergence_ADER_adaptive_equispaced_\test.csv};
			\addlegendentry{ADERdu-equi};

			\renewcommand{\method}{ADER_L2_gaussLobatto}
			\addplot[mark=diamond*,solid,mark size=1.3pt,darkspringgreen] table [x=dt, y=p average\method, col sep=comma] {figures/adaptive/convergence_ADER_adaptive_gaussLobatto_\test.csv};
			\addlegendentry{ADERdu-GLB};

			\renewcommand{\method}{ADER_L2_gaussLegendre}
			\addplot[mark=square*,dotted,mark size=1.3pt,red] table [x=dt, y=p average\method, col sep=comma] {figures/adaptive/convergence_ADER_adaptive_gaussLegendre_\test.csv};
			\addlegendentry{ADERdu-GLG};

			\renewcommand{\method}{ADER_L2_equispaced}
			\addplot[name path=us_top,dashed,blue!50]   table [x=dt, y expr=\thisrow{p average\method}+0.5*\thisrow{p std\method}, col sep=comma  ]{figures/adaptive/convergence_ADER_adaptive_equispaced_\test.csv};
			\addplot[name path=us_bot,dashed,blue!50]   table [x=dt, y expr=\thisrow{p average\method}-0.5*\thisrow{p std\method}, col sep=comma  ]{figures/adaptive/convergence_ADER_adaptive_equispaced_\test.csv};	
			\addplot[blue!30,fill opacity=0.3] fill between[of=us_top and us_bot];	
			
			\renewcommand{\method}{ADER_L2_gaussLobatto}
			\addplot[mark=diamond*,solid,mark size=1.3pt,darkspringgreen] table [x=dt, y=p average\method, col sep=comma] {figures/adaptive/convergence_ADER_adaptive_gaussLobatto_\test.csv};
			\addplot[name path=us_top,solid,darkspringgreen!50]   table [x=dt, y expr=\thisrow{p average\method}+0.5*\thisrow{p std\method}, col sep=comma  ]{figures/adaptive/convergence_ADER_adaptive_gaussLobatto_\test.csv};
			\addplot[name path=us_bot,solid,darkspringgreen!50]   table [x=dt, y expr=\thisrow{p average\method}-0.5*\thisrow{p std\method}, col sep=comma  ]{figures/adaptive/convergence_ADER_adaptive_gaussLobatto_\test.csv};
			\addplot[darkspringgreen!30,fill opacity=0.3] fill between[of=us_top and us_bot];

			\renewcommand{\method}{ADER_L2_gaussLegendre}
			\addplot[mark=square*,dotted,mark size=1.3pt,red] table [x=dt, y=p average\method, col sep=comma] {figures/adaptive/convergence_ADER_adaptive_gaussLegendre_\test.csv};
			\addplot[name path=us_top,dotted,red!50]   table [x=dt, y expr=\thisrow{p average\method}+0.5*\thisrow{p std\method}, col sep=comma  ]{figures/adaptive/convergence_ADER_adaptive_gaussLegendre_\test.csv};
			\addplot[name path=us_bot,dotted,red!50]   table [x=dt, y expr=\thisrow{p average\method}-0.5*\thisrow{p std\method}, col sep=comma  ]{figures/adaptive/convergence_ADER_adaptive_gaussLegendre_\test.csv};
			\addplot[red!30,fill opacity=0.3] fill between[of=us_top and us_bot];	
			
		\end{axis}
	\end{tikzpicture}
	\caption{\testname: Average number of iterations ($\pm$ half standard deviation) of adaptive \ADERu~(left) and \ADERdu~(right) for different time steps. \label{fig:p_adaptive_linear}}
\end{figure}

\subsubsection{C5 problem}
\renewcommand{\test}{C5}
\renewcommand{\testname}{C5}
The next test we study is the C5 nonstiff problem proposed in \cite{enright1987two}. It consists of a five body problem in three dimensions and it represents the description of the five outer planets (including Pluto) around the solar system. Each body has three coordinates $y_{1j},y_{2j},y_{3j}$ and each coordinate satisfies
\begin{equation}
	y_{ij}'' = k_2 \left(\frac{-(m_0+m_j)y_{ij}}{r_j^3} + \sum_{k\neq j} m_k\left[\frac{y_{ik}-y_{ij}}{d_{ik}^3}-\frac{y_{ik}}{r_k^3}\right]\right), \quad r_j^2=\sum_{i=1}^3 y_{ij}^2,\quad d_{kj}=\sum_{i=1}^3(y_{ik}-y_{ij})^2.
\end{equation}
All mass coefficients, gravitational constant, final time and initial conditions can be found in \cite{enright1987two}. This second order systems can be rewritten in terms of coordinates and velocities into a 30 equations system.

We are interested in this problem as the number of equations guarantees that the leading computational cost of the methods is directly proportional to the number of right-hand side evaluations. 
We use, as a reference, the solution obtained with ADER GLG of order 9 with 256 timesteps. All the errors are computed with respect to this reference solution.

\begin{figure}
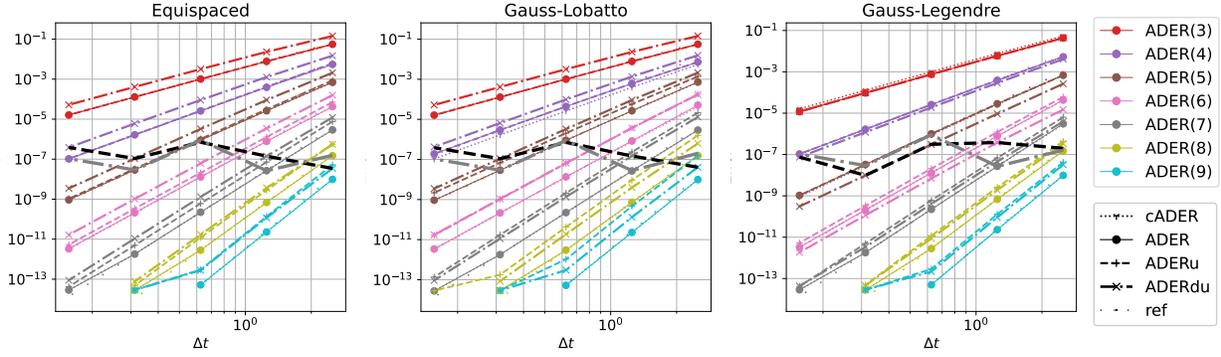

	\newcommand{\quadrature}{equispaced}
	\includegraphics[width=0.28\textwidth, trim={24 0 120 0}, clip]	{figures/adaptive/convergence_ADER_wc_adaptive_\quadrature_staggered_\test.pdf}
	\renewcommand{\quadrature}{gaussLobatto}
	\includegraphics[width=0.28\textwidth, trim={24 0 120 0}, clip]{figures/adaptive/convergence_ADER_wc_adaptive_\quadrature_staggered_\test.pdf}
	\renewcommand{\quadrature}{gaussLegendre}
	\includegraphics[width=0.28\textwidth, trim={24 0 120 0}, clip]{figures/adaptive/convergence_ADER_wc_adaptive_\quadrature_staggered_\test.pdf}
	\includegraphics[width=0.12\textwidth, trim={340 20 0 40}, clip]{figures/adaptive/convergence_ADER_wc_adaptive_\quadrature_staggered_\test.pdf}
	\caption{\testname: Error decay for various methods and orders. The ``ref'' line is the reference for every order of accuracy
	}
	\label{fig:convergence_C5}
\end{figure}

In Figure~\ref{fig:convergence_C5}, we observe that all schemes converge with the expected order of accuracy, even if, differently from the previous linear test, they do not have the same errors. For equispaced and GLB subtimenodes, we have that the smaller errors are almost always achieved by ADER and \cADER~methods, while for GLG nodes different orders have different behaviors. For example, for order 5 and 6 \ADERdu~is the method with lowest errors. As for the previous test, the adaptive methods with a tolerance of $10^{-8}$ lead, more or less, to errors of the magnitude of the tolerance, independently of the mesh discretization for all subtimenodes.

\begin{figure}
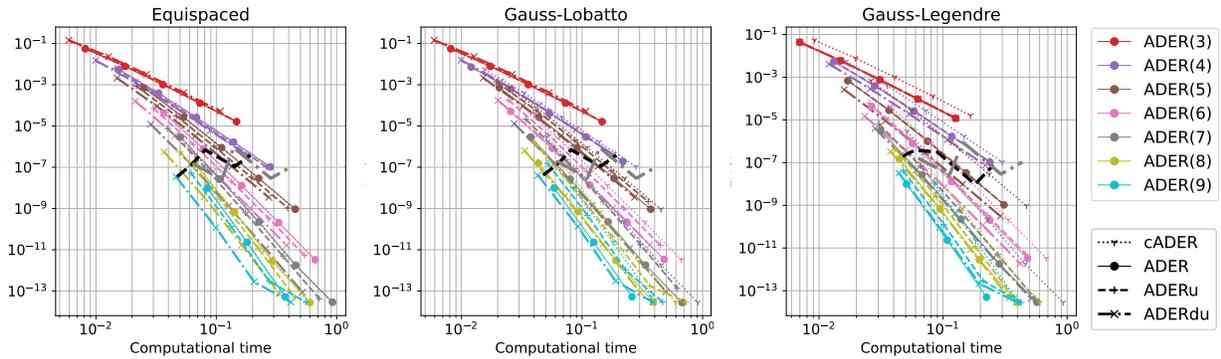

	\newcommand{\quadrature}{equispaced}
	\includegraphics[width=0.28\textwidth, trim={24 0 120 0}, clip]	{figures/adaptive/convergence_vs_time_ADER_wc_adaptive_equispaced_staggered_\test.pdf}
	\renewcommand{\quadrature}{gaussLobatto}
	\includegraphics[width=0.28\textwidth, trim={24 0 120 0}, clip]{figures/adaptive/convergence_vs_time_ADER_wc_adaptive_gaussLobatto_staggered_\test.pdf}
	\renewcommand{\quadrature}{gaussLegendre}
	\includegraphics[width=0.28\textwidth, trim={24 0 120 0}, clip]{figures/adaptive/convergence_vs_time_ADER_wc_adaptive_gaussLegendre_staggered_\test.pdf}
	\includegraphics[width=0.12\textwidth, trim={342 30 0 40}, clip]	{figures/adaptive/convergence_vs_time_ADER_wc_adaptive_gaussLegendre_staggered_\test.pdf}
	\caption{\testname:  Error with respect to computational time.
	}
	\label{fig:conv_time_C5}
\end{figure}

\begin{figure}
	\centering
	\newcommand{\quadrature}{equispaced}
	\includegraphics[width=0.28\textwidth, trim={30 0 120 0}, clip]{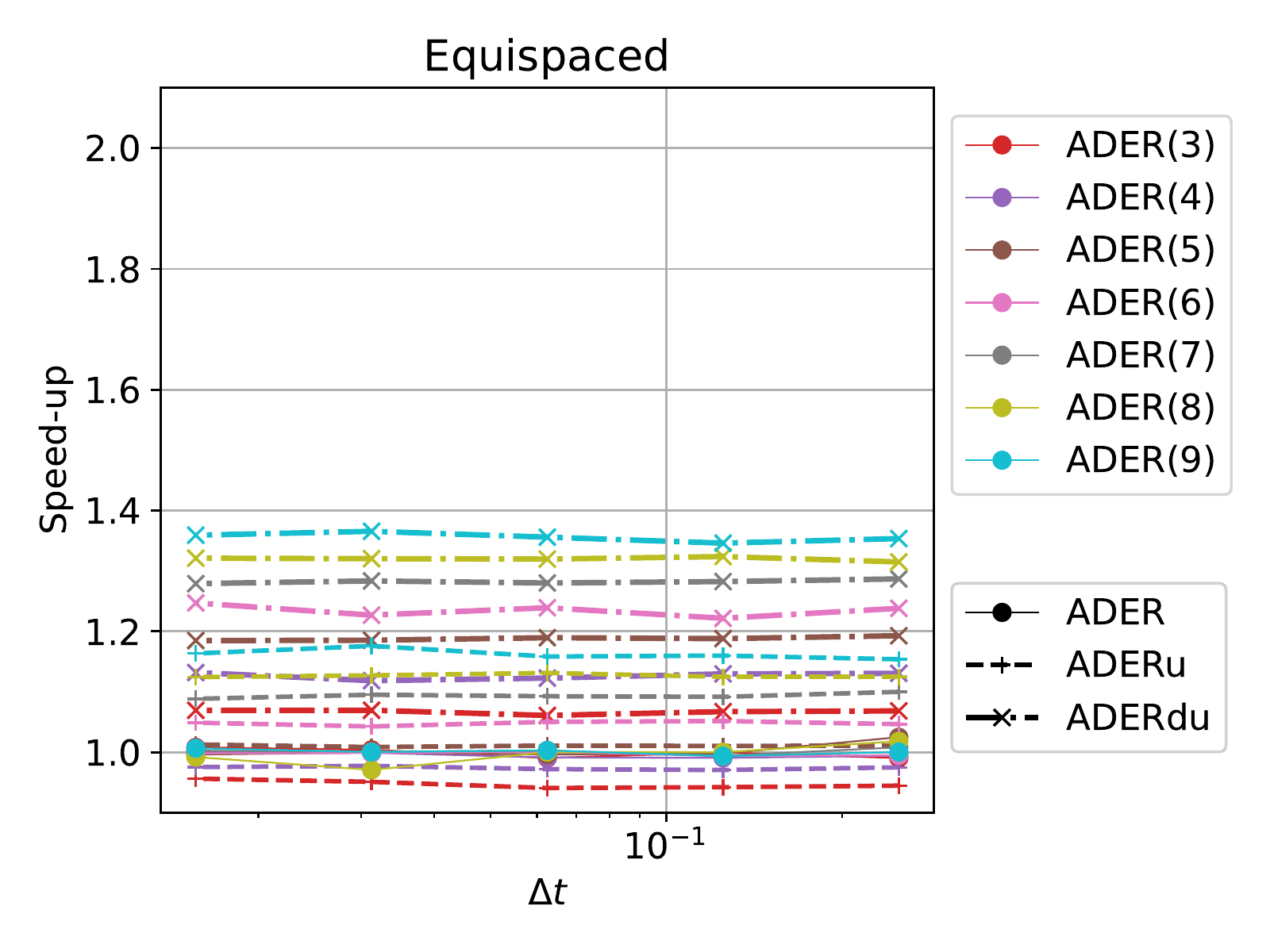}
	\renewcommand{\quadrature}{gaussLobatto}
	\includegraphics[width=0.28\textwidth, trim={30 0 120 0}, clip]{figures/adaptive/speed_up_ADER_wc_\quadrature_staggered_\test.pdf}
	\renewcommand{\quadrature}{gaussLegendre}
	\includegraphics[width=0.28\textwidth, trim={30 0 120 0}, clip]{figures/adaptive/speed_up_ADER_wc_\quadrature_staggered_\test.pdf}
	\includegraphics[width=0.12\textwidth, trim={343 30 0 40}, clip]{figures/adaptive/speed_up_ADER_wc_\quadrature_staggered_\test.pdf}
	\caption{{\testname} test: \textit{Numerical speed-up} 
		with respect to \cADER~method computed as the computational time of the \cADER~method over the computational time of method in consideration.
		\label{fig:speed_up_C5}}
\end{figure}

In terms of computational time, see Figure~\ref{fig:conv_time_C5}, we can observe that, most of the time, the \ADERdu~is the fastest scheme for comparable errors. 
For adaptive schemes, we observe again that the errors obtained with a fixed tolerance does not vary changing the time discretization, but the best computational times are obtained with coarse meshes. In Figure~\ref{fig:speed_up_C5}, we plot the \textit{numerical speed-up} of ADER, \ADERu~and \ADERdu~methods against \cADER. We recall that the \textit{numerical speed-up} is defined as the computational time of the reference method, in this case of \cADER, divided by the computational time of the method of interest, in our case ADER, \ADERu~and \ADERdu.
In this test problem, the simulation cost is mainly dictated by the number of right hand side evaluations, and this can be seen in Figure~\ref{fig:speed_up_C5} as the \textit{numerical speed-up} values do not vary much for different $\Delta t$. The \textit{theoretical speed-up} values can be found in Tables~\ref{tab:S_equispaced}, \ref{tab:S_GLB} and \ref{tab:S_GLG} under the \cADER-speed-up column. Comparing those values, we notice that the \textit{numerical speed-ups} are very close to the theoretical ones within a 10\% error maximum.

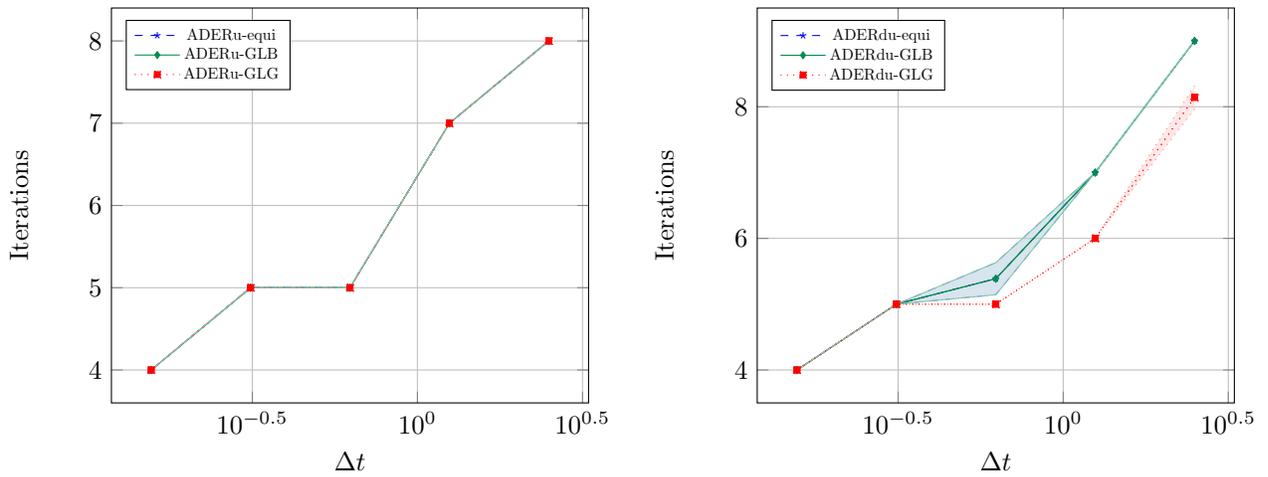
\begin{figure}
	\centering
	\begin{subfigure}{0.48\textwidth}
		\begin{tikzpicture}
			\begin{axis}[
				xmode=log,
				grid=major,
				xlabel={$\dt$},
				ylabel={Iterations},
				legend pos=north west,
				legend style={nodes={scale=0.6, transform shape}},
				width=\textwidth
				]					
				\newcommand{\method}{ADER_u_equispaced}
				\addplot[mark=star,dashed, mark size=1.3pt,blue] table [x=dt, y=p average\method, col sep=comma] {figures/adaptive/convergence_ADER_adaptive_equispaced_\test.csv};
				\addlegendentry{ADERu-equi};

				\renewcommand{\method}{ADER_u_gaussLobatto}
				\addplot[mark=diamond*,solid,mark size=1.3pt,darkspringgreen] table [x=dt, y=p average\method, col sep=comma] {figures/adaptive/convergence_ADER_adaptive_gaussLobatto_\test.csv};
				\addlegendentry{ADERu-GLB};

				\renewcommand{\method}{ADER_u_gaussLegendre}
				\addplot[mark=square*,dotted,mark size=1.3pt,red] table [x=dt, y=p average\method, col sep=comma] {figures/adaptive/convergence_ADER_adaptive_gaussLegendre_\test.csv};
				\addlegendentry{ADERu-GLG};

				\renewcommand{\method}{ADER_u_equispaced}
				\addplot[name path=us_top,dashed,blue!50]   table [x=dt, y expr=\thisrow{p average\method}+0.5*\thisrow{p std\method}, col sep=comma  ]{figures/adaptive/convergence_ADER_adaptive_equispaced_\test.csv};
				\addplot[name path=us_bot,dashed,blue!50]   table [x=dt, y expr=\thisrow{p average\method}-0.5*\thisrow{p std\method}, col sep=comma  ]{figures/adaptive/convergence_ADER_adaptive_equispaced_\test.csv};	
				\addplot[blue!30,fill opacity=0.3] fill between[of=us_top and us_bot];	
				
				\renewcommand{\method}{ADER_u_gaussLobatto}
				\addplot[mark=diamond*,solid,mark size=1.3pt,darkspringgreen] table [x=dt, y=p average\method, col sep=comma] {figures/adaptive/convergence_ADER_adaptive_gaussLobatto_\test.csv};
				\addplot[name path=us_top,solid,darkspringgreen!50]   table [x=dt, y expr=\thisrow{p average\method}+0.5*\thisrow{p std\method}, col sep=comma  ]{figures/adaptive/convergence_ADER_adaptive_gaussLobatto_\test.csv};
				\addplot[name path=us_bot,solid,darkspringgreen!50]   table [x=dt, y expr=\thisrow{p average\method}-0.5*\thisrow{p std\method}, col sep=comma  ]{figures/adaptive/convergence_ADER_adaptive_gaussLobatto_\test.csv};
				\addplot[darkspringgreen!30,fill opacity=0.3] fill between[of=us_top and us_bot];

				\renewcommand{\method}{ADER_u_gaussLegendre}
				\addplot[mark=square*,dotted,mark size=1.3pt,red] table [x=dt, y=p average\method, col sep=comma] {figures/adaptive/convergence_ADER_adaptive_gaussLegendre_\test.csv};
				\addplot[name path=us_top,dotted,red!50]   table [x=dt, y expr=\thisrow{p average\method}+0.5*\thisrow{p std\method}, col sep=comma  ]{figures/adaptive/convergence_ADER_adaptive_gaussLegendre_\test.csv};
				\addplot[name path=us_bot,dotted,red!50]   table [x=dt, y expr=\thisrow{p average\method}-0.5*\thisrow{p std\method}, col sep=comma  ]{figures/adaptive/convergence_ADER_adaptive_gaussLegendre_\test.csv};
				\addplot[red!30,fill opacity=0.3] fill between[of=us_top and us_bot];	
				
			\end{axis}
		\end{tikzpicture}
	\end{subfigure}
	\hfill
	\begin{subfigure}{0.48\textwidth}
		\begin{tikzpicture}
			\begin{axis}[
				xmode=log,
				grid=major,
				xlabel={$\dt$},
				ylabel={Iterations},
				legend pos=north west,
				legend style={nodes={scale=0.6, transform shape}},
				width=\textwidth
				]					
				\newcommand{\method}{ADER_L2_equispaced}
				\addplot[mark=star,dashed, mark size=1.3pt,blue] table [x=dt, y=p average\method, col sep=comma] {figures/adaptive/convergence_ADER_adaptive_equispaced_\test.csv};
				\addlegendentry{ADERdu-equi};

				\renewcommand{\method}{ADER_L2_gaussLobatto}
				\addplot[mark=diamond*,solid,mark size=1.3pt,darkspringgreen] table [x=dt, y=p average\method, col sep=comma] {figures/adaptive/convergence_ADER_adaptive_gaussLobatto_\test.csv};
				\addlegendentry{ADERdu-GLB};

				\renewcommand{\method}{ADER_L2_gaussLegendre}
				\addplot[mark=square*,dotted,mark size=1.3pt,red] table [x=dt, y=p average\method, col sep=comma] {figures/adaptive/convergence_ADER_adaptive_gaussLegendre_\test.csv};
				\addlegendentry{ADERdu-GLG};

				\renewcommand{\method}{ADER_L2_equispaced}
				\addplot[name path=us_top,dashed,blue!50]   table [x=dt, y expr=\thisrow{p average\method}+0.5*\thisrow{p std\method}, col sep=comma  ]{figures/adaptive/convergence_ADER_adaptive_equispaced_\test.csv};
				\addplot[name path=us_bot,dashed,blue!50]   table [x=dt, y expr=\thisrow{p average\method}-0.5*\thisrow{p std\method}, col sep=comma  ]{figures/adaptive/convergence_ADER_adaptive_equispaced_\test.csv};	
				\addplot[blue!30,fill opacity=0.3] fill between[of=us_top and us_bot];	
				
				\renewcommand{\method}{ADER_L2_gaussLobatto}
				\addplot[mark=diamond*,solid,mark size=1.3pt,darkspringgreen] table [x=dt, y=p average\method, col sep=comma] {figures/adaptive/convergence_ADER_adaptive_gaussLobatto_\test.csv};
				\addplot[name path=us_top,solid,darkspringgreen!50]   table [x=dt, y expr=\thisrow{p average\method}+0.5*\thisrow{p std\method}, col sep=comma  ]{figures/adaptive/convergence_ADER_adaptive_gaussLobatto_\test.csv};
				\addplot[name path=us_bot,solid,darkspringgreen!50]   table [x=dt, y expr=\thisrow{p average\method}-0.5*\thisrow{p std\method}, col sep=comma  ]{figures/adaptive/convergence_ADER_adaptive_gaussLobatto_\test.csv};
				\addplot[darkspringgreen!30,fill opacity=0.3] fill between[of=us_top and us_bot];

				\renewcommand{\method}{ADER_L2_gaussLegendre}
				\addplot[mark=square*,dotted,mark size=1.3pt,red] table [x=dt, y=p average\method, col sep=comma] {figures/adaptive/convergence_ADER_adaptive_gaussLegendre_\test.csv};
				\addplot[name path=us_top,dotted,red!50]   table [x=dt, y expr=\thisrow{p average\method}+0.5*\thisrow{p std\method}, col sep=comma  ]{figures/adaptive/convergence_ADER_adaptive_gaussLegendre_\test.csv};
				\addplot[name path=us_bot,dotted,red!50]   table [x=dt, y expr=\thisrow{p average\method}-0.5*\thisrow{p std\method}, col sep=comma  ]{figures/adaptive/convergence_ADER_adaptive_gaussLegendre_\test.csv};
				\addplot[red!30,fill opacity=0.3] fill between[of=us_top and us_bot];	
				
			\end{axis}
		\end{tikzpicture}
	\end{subfigure}
	\caption{{\testname} test: Average number of iterations ($\pm$ half standard deviation) of adaptive \ADERu~(left) and \ADERdu~(right) for different time steps. \label{fig:p_adaptive_C5}}
\end{figure}
Finally, for the adaptive methods we observe in Figure~\ref{fig:p_adaptive_C5} that the chosen order for a given simulation is quite stable along the simulations, as we see almost no variance in all plots, and it scales very well changing the time discretization scale.

\begin{table}
	\centering
	\footnotesize
	\begin{tabular}{|c|c||c|c|c||c|c|c|}
		\hline
		\multicolumn{2}{|c||}{Speed-up}& \multicolumn{3}{c||}{ADERu vs ADER}& \multicolumn{3}{c|}{ADERdu vs ADER}\\ \hline \hline 
		$P$ & $M$ & Th. & LS & C5 & Th. & LS & C5\\\hline 
		3 & 2 & 1 & 0.956 & 0.993 & 1.5 & 1.069 & 1.366 \\ \hline 
		4 & 3 & 1.091 & 0.975 & 1.076 & 1.714 & 1.132 & 1.561 \\ \hline 
		5 & 4 & 1.176 & 1.013 & 1.154 & 1.818 & 1.185 & 1.664 \\ \hline 
		6 & 5 & 1.25 & 1.049 & 1.225 & 1.875 & 1.247 & 1.731 \\ \hline 
		7 & 6 & 1.312 & 1.088 & 1.288 & 1.909 & 1.279 & 1.779 \\ \hline 
		8 & 7 & 1.366 & 1.124 & 1.333 & 1.931 & 1.321 & 1.806 \\ \hline 
		9 & 8 & 1.431 & 1.163 & 1.371 & 1.973 & 1.359 & 1.813 \\ \hline 
	\end{tabular}
	
	\caption{Comparison between the \textit{theoretical speed-ups} and the numerical ones obtained for linear system (LS) test and the C5 problem. Equispaced subtimenodes}	\label{tab:speed_up_equi}
\end{table}

\begin{table}
	\centering
	\resizebox{\columnwidth}{!}{
		\begin{tabular}{|c|c||c|c|c||c|c|c||c|c|c||c|c|c||c|c|c|}
			\hline
			\multicolumn{2}{|c||}{Speed-up}& \multicolumn{3}{c||}{ADER vs \cADER}& \multicolumn{3}{c||}{ADERu vs \cADER}& \multicolumn{3}{c||}{ADERdu vs \cADER}& \multicolumn{3}{c||}{ADERu vs ADER}& \multicolumn{3}{c|}{ADERdu vs ADER}\\ \hline \hline 
			$P$ & $M$ & Th. & LS & C5 & Th. & LS & C5& Th. & LS & C5 & Th. & LS & C5& Th. & LS & C5\\\hline 
			3 & 2 & 1 & 0.997 & 0.997 & 1 & 0.951 & 0.989 & 1.5 & 1.066 & 1.357 & 1 & 0.954 & 0.992 & 1.5 & 1.070 & 1.361 \\ \hline 
			4 & 2 & 1.333 & 1.141 & 1.283 & 1.333 & 1.052 & 1.270 & 1.714 & 1.133 & 1.559 & 1 & 0.922 & 0.990 & 1.286 & 0.994 & 1.216 \\ \hline 
			5 & 3 & 1.25 & 1.130 & 1.218 & 1.333 & 1.076 & 1.280 & 1.818 & 1.192 & 1.659 & 1.067 & 0.952 & 1.050 & 1.455 & 1.055 & 1.362 \\ \hline 
			6 & 3 & 1.5 & 1.250 & 1.391 & 1.579 & 1.177 & 1.496 & 2 & 1.281 & 1.831 & 1.053 & 0.942 & 1.075 & 1.333 & 1.024 & 1.317 \\ \hline 
			7 & 4 & 1.433 & 1.220 & 1.352 & 1.593 & 1.187 & 1.484 & 2.048 & 1.300 & 1.837 & 1.111 & 0.973 & 1.098 & 1.429 & 1.066 & 1.359 \\ \hline 
			8 & 4 & 1.6 & 1.328 & 1.533 & 1.75 & 1.277 & 1.660 & 2.154 & 1.398 & 1.988 & 1.094 & 0.962 & 1.083 & 1.346 & 1.053 & 1.297 \\ \hline 
			9 & 5 & 1.521 & 1.294 & 1.447 & 1.738 & 1.290 & 1.625 & 2.147 & 1.426 & 1.964 & 1.143 & 0.998 & 1.123 & 1.412 & 1.102 & 1.357 \\ \hline 
		\end{tabular}
	}
	\caption{Comparison between the \textit{theoretical speed-ups} and the numerical ones obtained for linear system (LS) test and the C5 problem. GLB subtimenodes}	\label{tab:speed_up_GLB}
\end{table}

\begin{table}
	\centering
	\resizebox{\columnwidth}{!}{
		\begin{tabular}{|c|c||c|c|c||c|c|c||c|c|c||c|c|c||c|c|c|}
			\hline
			\multicolumn{2}{|c||}{Speed-up}& \multicolumn{3}{c||}{ADER vs \cADER}& \multicolumn{3}{c||}{ADERu vs \cADER}& \multicolumn{3}{c||}{ADERdu vs \cADER}& \multicolumn{3}{c||}{ADERu vs ADER}& \multicolumn{3}{c|}{ADERdu vs ADER}\\ \hline \hline 
			$P$ & $M$ & Th. & LS & C5 & Th. & LS & C5& Th. & LS & C5 & Th. & LS & C5& Th. & LS & C5\\\hline 
			3 & 1 & 1.4 & 1.146 & 1.319 & 1.4 & 1.056 & 1.309 & 1.4 & 1.104 & 1.311 & 1 & 0.922 & 0.992 & 1 & 0.963 & 0.994 \\ \hline 
			4 & 2 & 1.3 & 1.146 & 1.255 & 1.3 & 1.053 & 1.248 & 1.444 & 1.131 & 1.371 & 1 & 0.919 & 0.995 & 1.111 & 0.987 & 1.092 \\ \hline 
			5 & 2 & 1.615 & 1.280 & 1.523 & 1.615 & 1.164 & 1.516 & 1.75 & 1.222 & 1.625 & 1 & 0.910 & 0.996 & 1.083 & 0.955 & 1.067 \\ \hline 
			6 & 3 & 1.476 & 1.238 & 1.420 & 1.55 & 1.158 & 1.473 & 1.722 & 1.240 & 1.620 & 1.05 & 0.936 & 1.038 & 1.167 & 1.002 & 1.141 \\ \hline 
			7 & 3 & 1.72 & 1.372 & 1.629 & 1.792 & 1.275 & 1.679 & 1.955 & 1.333 & 1.818 & 1.042 & 0.929 & 1.031 & 1.136 & 0.971 & 1.116 \\ \hline 
			8 & 4 & 1.583 & 1.330 & 1.517 & 1.727 & 1.266 & 1.635 & 1.9 & 1.350 & 1.777 & 1.091 & 0.952 & 1.078 & 1.2 & 1.016 & 1.172 \\ \hline 
			9 & 4 & 1.78 & 1.432 & 1.691 & 1.921 & 1.360 & 1.805 & 2.086 & 1.433 & 1.936 & 1.079 & 0.949 & 1.068 & 1.171 & 1.000 & 1.145 \\ \hline 
		\end{tabular}
	}
	\caption{Comparison between the \textit{theoretical speed-ups} and the numerical ones obtained for linear system (LS) test and the C5 problem. GLG subtimenodes}\label{tab:speed_up_GLG}	
\end{table}

In Tables~\ref{tab:speed_up_equi}, \ref{tab:speed_up_GLB} and \ref{tab:speed_up_GLG}, we can observe the \textit{numerical speed-ups} compared with the theoretical ones for equispaced, GLB and GLG subtimenodes respectively. We recall that the \textit{theoretical speed-up} is the ratio between the number of RK stages of the reference method over the number of RK stages of the method of interest. On the other hand, the \textit{numerical speed-up} is computed as the ratio between the computational times of such methods.
The two measures are quite comparable as, in particular for complicated problems, the cost of the evaluation of $\uvec{G}$ is the most expensive operation of the method, modulo parallelization and vectorization.
For the ODE tests, our Python implementation, based on \texttt{numpy}, performs very similarly to the expected theoretical predictions.
Looking at the tables, we observe that in the context of problem~C5, for which the complexity of the function $\uvec{G}$ is higher than for the linear system, the \textit{numerical speed-ups} coincide with the theoretical ones within an error which is at most 10\%. 
On the other hand, for the linear system, lower \textit{numerical speed-ups} are obtained, as the ODE is so simple that the cost of basic operations is not negligible with respect to the cost of the computation of $\uvec{G}$. As already remarked, the linear system is not a suitable problem for verifying the computational advantages of the new methods and its only purpose is verifying some analytical results, yet computational advantages are registered also in this case.

\subsection{PDE tests}
In this section, we will apply the novel ADER methods to hyperbolic PDEs through SD spatial semidiscretization as described in Section~\ref{sec:SD}. In particular, the accuracy of the ADER methods will always be chosen equal to the spatial accuracy.  The timings reported in this section are CPU process times.

\subsubsection{Linear advection}

We consider the linear advection equation
\begin{equation}
	\label{eq:advection}
	\partial_t u + \partial_x (a u) = 0, 
\end{equation}
where $u(x,t)$ is a scalar wave,  advected by a nonzero constant $a \in \mathbb{R}$.  We consider the spatial domain $x\in[0,1]$, $a = 1$, initial condition $u_0(x) = \sin(2\pi x)$ with periodic boundary conditions. The analytical solution is given by $u(x,t)=u_0(x-at)$.  The CFL is chosen according to \eqref{eq:sd-cfl}, with $C=0.8$, and the final time is set to be $T=1$. 

\begin{figure}
	\includegraphics[width=1.0\textwidth]	{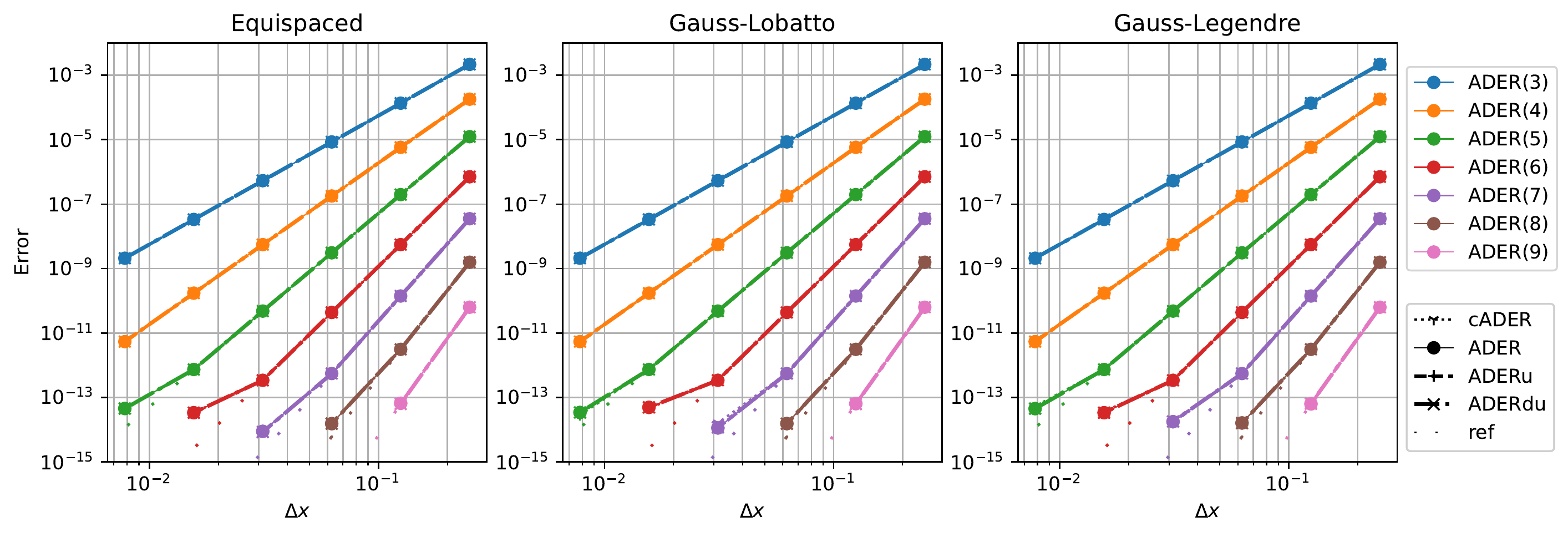}
	\caption{Linear advection: Error decay for various methods and orders. The ``ref'' line is the reference for every order of accuracy}
	\label{fig:error_res_advection}
\end{figure}

\begin{figure}
	\includegraphics[width=1.0\textwidth]	{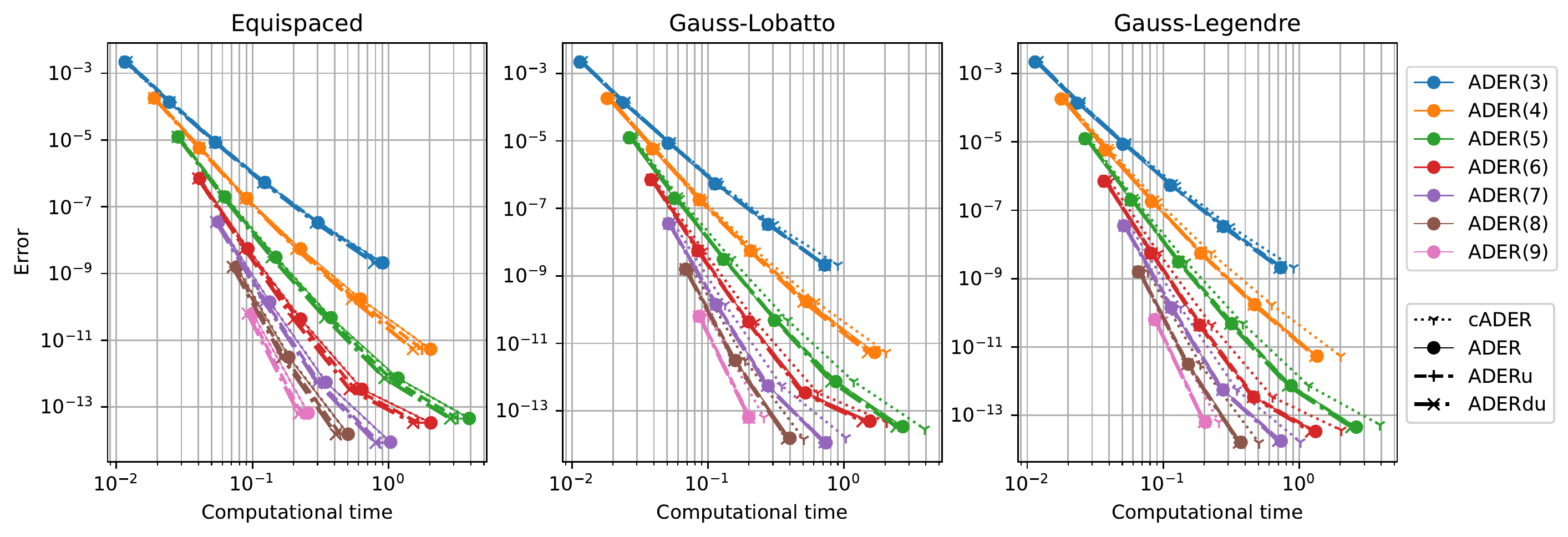}
	\caption{Linear advection: Error with respect to computational time}	
	\label{fig:error_time_advection}
\end{figure}
In Figure~\ref{fig:error_res_advection},  we show the convergence plots and observe that the theoretical convergence rate is attained. 
In Figure~\ref{fig:error_time_advection},  the error as a function of the computational time of the different ADER methods is shown. 
We can observe that the proposed modifications succeed in reducing the computational costs as $\Delta x$ decreases, being \ADERdu~and \ADERu~the best performing schemes, followed by ADER with optimal number of subtimenodes and, finally, by \cADER.
\begin{figure}
	\includegraphics[width=1.0\textwidth]	{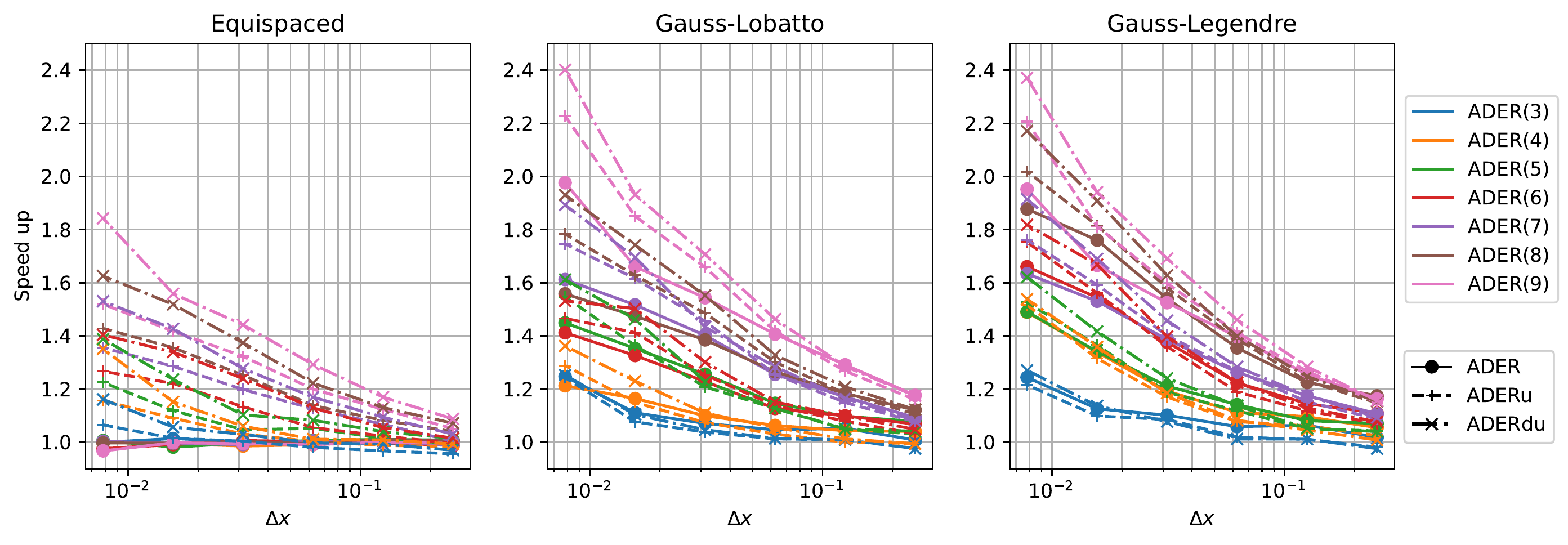}
	\caption{Linear advection: \textit{Numerical speed-up} factor with respect to cADER method computed as the computational time of the cADER method over the computational time of method in consideration}
	\label{fig:speed_up_advection}
\end{figure}
Lastly, in Figure~\ref{fig:speed_up_advection},  we show the \textit{numerical speed-ups} attained when using the reformulated ADER,  \ADERu~and \ADERdu~with respect to \cADER. In particular, we empirically observe \textit{numerical speed-ups} that vary a lot depending on the spatial discretization and that they increase as the spatial grid is refined and the polynomial order increases. 
The authors are not completely sure about the reason of this behavior, but they suspect that the implementation through Python packages and functions as \texttt{einsum} of \texttt{numpy} could be the source of this outcome. The phenomenon is currently under investigation.

However, despite the less homogeneous behavior of the \textit{numerical speed-ups} with respect to the ODE tests, the results obtained for small values of $\Delta x$ (in principle more reliable) are in good agreement with the theoretical predictions.
In general, we remark again that the \ADERdu~is the fastest method for not too coarse meshes.

\subsubsection{Euler equations}
We now consider the 1D Euler equations
\begin{equation}
	\partial_t \begin{pmatrix}
		\rho \\
		\rho v \\
		E
	\end{pmatrix} + \partial_x 
	\begin{pmatrix}
		\rho v \\
		\rho v^2 + p \\
		(E + p)v
	\end{pmatrix} = 0 ,
\end{equation}
where $\rho$ is the mass density, $v$ the velocity, $E = e + \frac{1}{2}\rho v^2$ the total energy, equal to the sum of the internal energy density $e$ and the kinetic energy density. The system is closed with the equation of state of an ideal gas $p = (\gamma-1)e$, with $\gamma$ the constant adiabatic index,  set at $\gamma=1.4$.

We consider first a nonlinear sound wave test case,  as in \cite{velasco2022spectral},  that consists of a nonlinear acoustic perturbation over a uniform equilibrium state
\begin{equation}
	\label{eq:sound_wave_ic}
	\begin{split}
		\rho_0(x) &= 1 + A\sin(kx)/c_{s,0}, \\
		v_0(x) &= A\sin(kx),    \\
		p_0(x) &= 1 + \gamma p_0 A\sin(kx) /c_{s,0},
	\end{split}
\end{equation}
in the spatial computational domain $[0,1]$, with $c_{s,0}=\sqrt{\gamma}$,  $k= 20\pi$ and $A=10^{-6}$.

The corresponding time-dependent solution for the velocity field has the following analytical solution, valid up to second-order (in a perturbative sense),
\begin{equation*}
	v(x,t) = A\sin(kx-\omega t) + A^2\frac{\gamma+1}{4} \frac{\omega t}{c_{s,0}}\cos{[2(kx-\omega t)]},
\end{equation*}
for $\omega = k\gamma$.

We compute the error of the numerical solution with respect to this analytical solution at the final time $T=1/(c_{s,0}k)$,  corresponding to the time needed by the sound wave to perform one complete orbit over the periodic domain.  The CFL is chosen according to \eqref{eq:sd-cfl}, with $C=0.4$. 

First,  we verify in Figure~\ref{fig:error_euler_wave} the convergence rates for increasing degree of the polynomial basis.  We observe essentially no difference between the different versions of ADER.  
As the errors are identical for all types of schemes, we report directly the \textit{numerical speed-up} as a measure of the computational advantage. In Figure~\ref{fig:speed_up_euler_wave}, \ADERdu~outperforms all other methods reaching \textit{numerical speed-up} factors of the order of the theoretical ones in Section~\ref{sec:RK}. Also in this case, there is more variance in these results with respect to the ODEs ones and the authors believe this may be due to the implementation of the ADER-SD method using Python packages and functions as \texttt{einsum} of \texttt{numpy}, where, for instance, the role of the memory layout plays a big role in the computational costs.
Nevertheless, just like in the previous test, for small values of $\Delta x$ the \textit{numerical speed-ups} get closer to the analytical ones.

\begin{figure}
	\includegraphics[width=1.0\textwidth]	{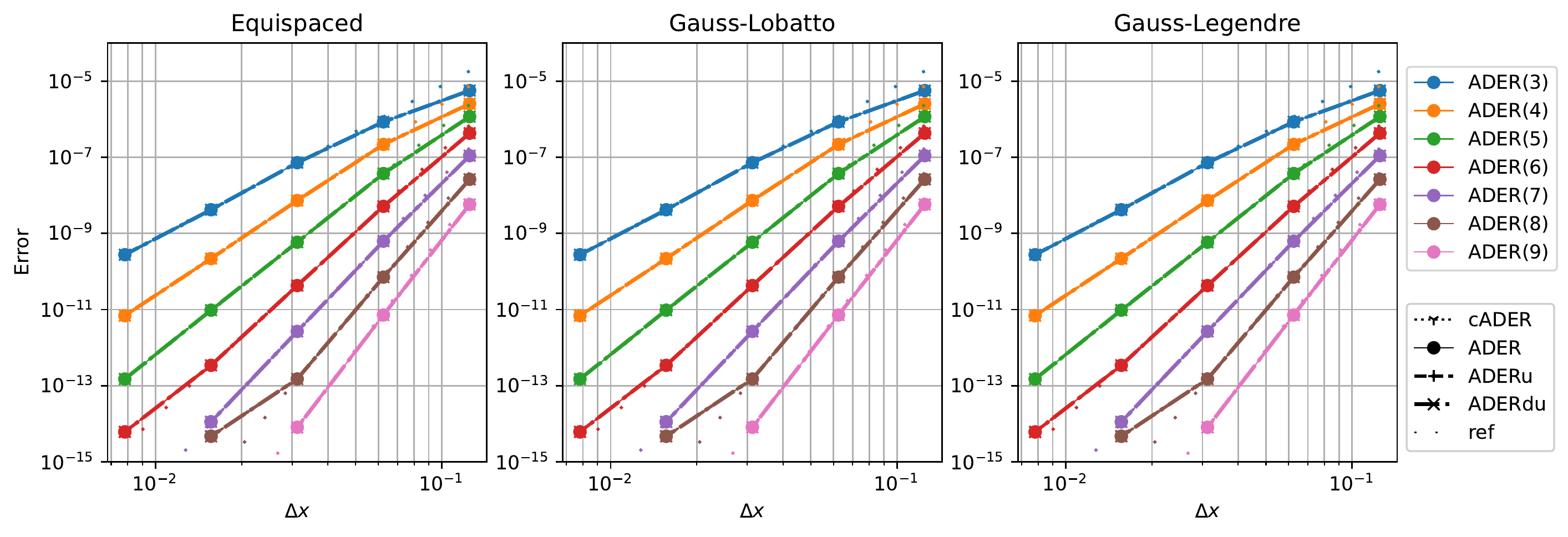}
	\caption{1D Euler equations with sound wave initial condition \eqref{eq:sound_wave_ic}: Error decay for various methods and orders. The ``ref'' line is the reference for every order of accuracy}
	\label{fig:error_euler_wave}
\end{figure}

\begin{figure}
	\includegraphics[width=1.0\textwidth]	{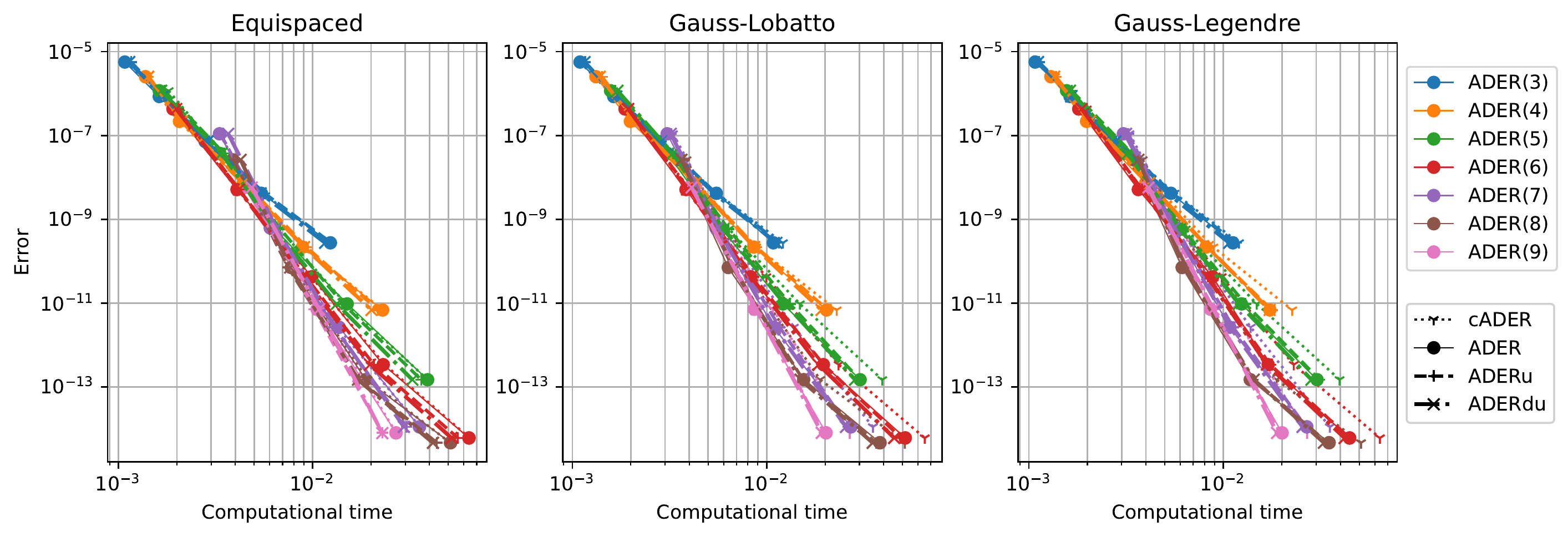}
	\caption{1D Euler equations with sound wave initial condition \eqref{eq:sound_wave_ic}: Error with respect to computational time}	
	\label{fig:error_time_euler_wave}
\end{figure}

\begin{figure}
	\includegraphics[width=1.0\textwidth]	{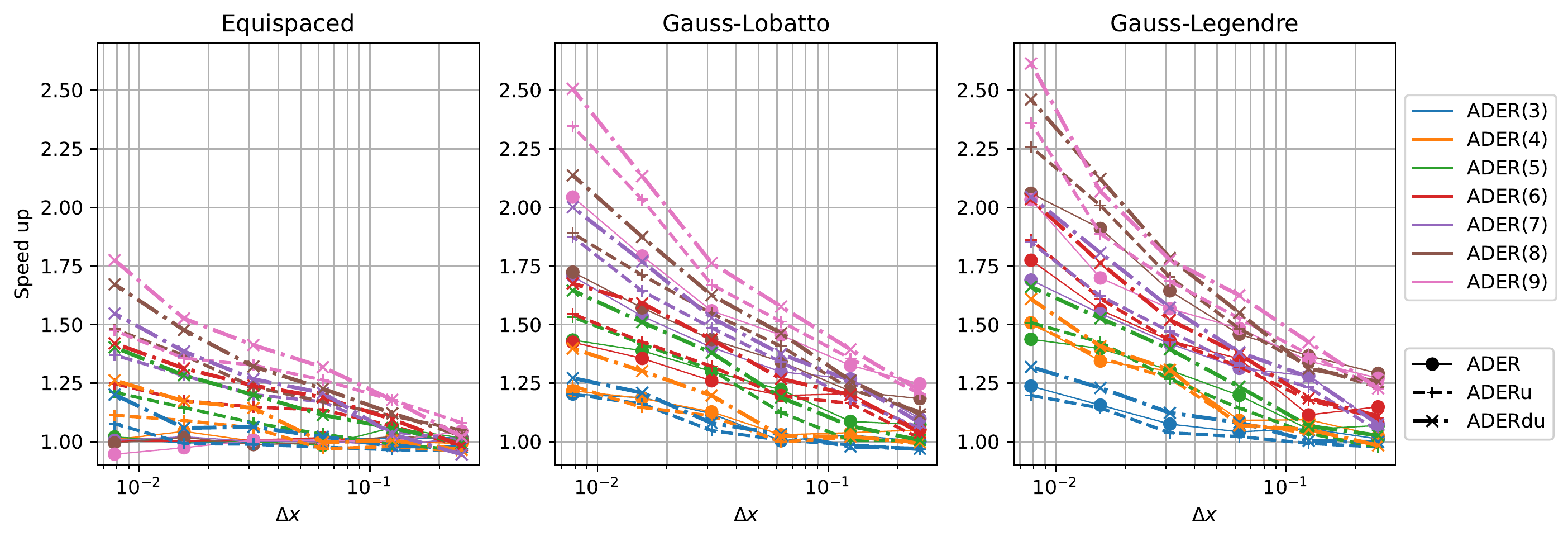}
	\caption{1D Euler equations  with sound wave initial condition \eqref{eq:sound_wave_ic}: \textit{Numerical speed-up} factor with respect to \cADER~method computed as the computational time of the \cADER~over the computational time of the method in consideration}
	\label{fig:speed_up_euler_wave}
\end{figure}

The next test is the well known Sod shock tube problem \cite{sod1978}, characterized by the following initial condition
\begin{align}
	\label{eq:sod}
	\rho_0(x)= \left\{
	\begin{array}{ll}
		1, & x < 0.5, \\
		0.1, & x\geq 0.5,
	\end{array} 
	\right.  \quad  v_0(x) = 1, \quad p_0(x)= \left\{
	\begin{array}{ll}
		1, & x < 0.5, \\
		0.125, & x\geq 0.5,
	\end{array} 
	\right. 
\end{align}
in the spatial computational domain $[0,1]$ and zero gradient boundary conditions.  
We let the numerical solution evolve until the final time $T = 0.2$.  In order to tackle the discontinuities occurring in the solution, we adopt an \textit{a posteriori} limiting strategy similar to the one presented in \cite{velasco2022spectral} and described in detail in \ref{app:limiter}.

In this case, we do not perform convergence analysis as the convergence order would be at most 1, due to the mentioned discontinuities. 
However,  we can notice, from Figure \ref{fig:solution_euler_sod}, that the quality of the solution improves as the (formal) order of the discretization increases and that there is no noticeable difference between the quality of the solution for the different versions of the ADER scheme.
In Figure~\ref{fig:speed_up_euler_sod},  we report the numerical speed-ups obtained for this particular test.
As expected, we observe that \ADERu, \ADERdu~and ADER outperform \cADER. Further, \ADERu~and \ADERdu~outperform ADER. 
Again, as in the previous PDE tests, we observe a dependency on the grid size $\Delta x$, with larger gains when the mesh is more refined.
Let us notice that the computational complexity of the limiter is directly related to the number of subtimenodes and this implies further advantages in ADER, \ADERu~and \ADERdu~with respect to \cADER.

\begin{figure}
	\includegraphics[width=1.0\textwidth]	{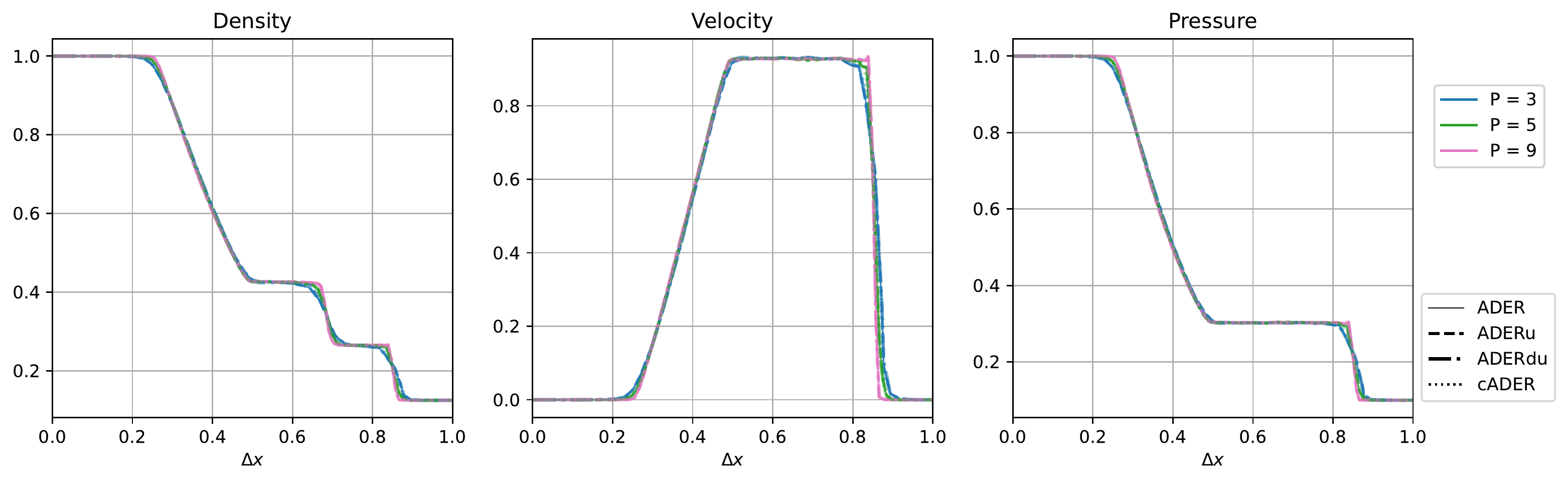}
	\caption{1D Euler equations  with Sod shock tube initial condition \eqref{eq:sod}: Numerical solutions computed for different orders $P$ for a fixed resolution characterized by $16$ elements}
	\label{fig:solution_euler_sod}
\end{figure}

\begin{figure}
	\includegraphics[width=1.0\textwidth]	{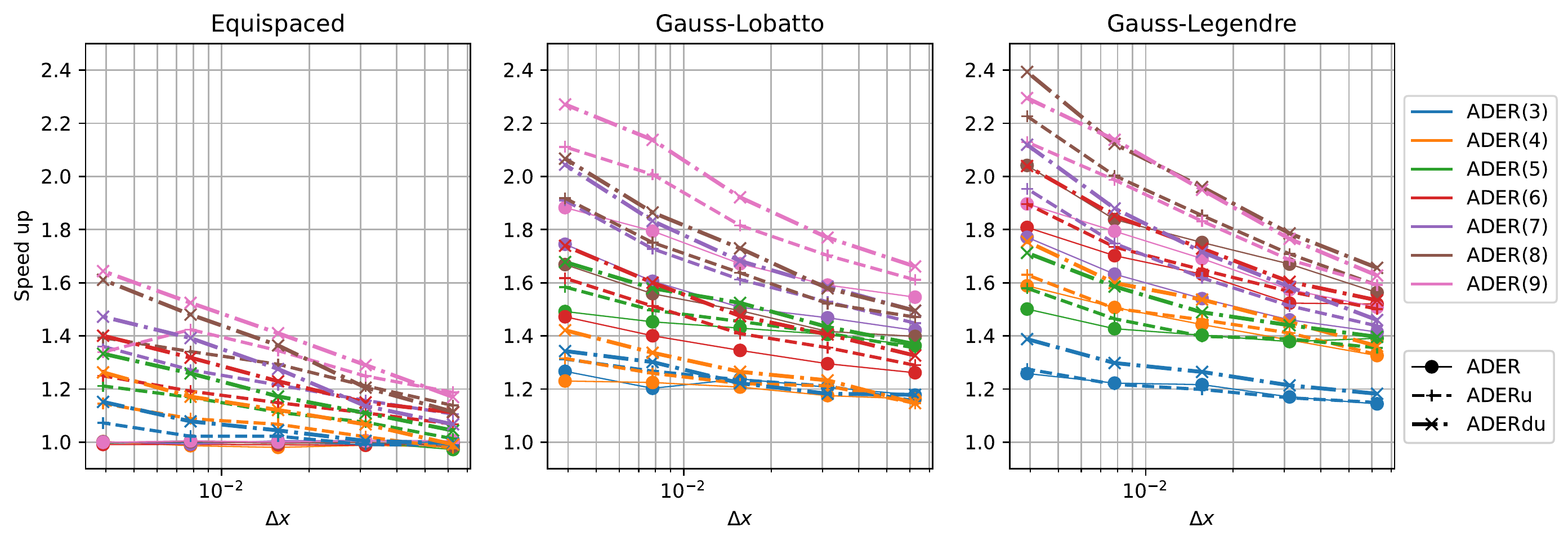}
	\caption{1D Euler equations with Sod shock tube initial condition \eqref{eq:sod}: \textit{Numerical speed-up} factor with respect to \cADER~method computed as the computational time of the \cADER~method over the computational time of the method in consideration}
	\label{fig:speed_up_euler_sod}
\end{figure}

\section{Conclusions and further developments}
\label{sec:Conclusions}
Summarizing, in this work we have showed different techniques to save computational times when adopting ADER methods.  In the first part of the paper, we have shown how the weak formulations can be optimally discretized using a minimal reconstruction degree to maximize the order of accuracy. 
Then, we have designed efficient methods, based on increasing the polynomial degree of the reconstructed numerical solution at each iteration, saving computational time especially in early stages. 
This allowed us to easily set up $p$-adaptive versions of the new schemes, where the iterations can be stopped on-the-fly when certain criteria are met, e.g., the matching of an error tolerance.
The whole presentation is accompanied by theoretical and numerical analysis that validate the proposed methods, showing strong improvements in the computational times without degradation of the accuracy nor stability of the methods.

We believe these strategies can have a strong impact in the community that uses these algorithms, as they allow to speed-up the simulations, save computational resources and they give new hints for adaptive methods. 
From this work, new research directions arise, both in the ODE and hyperbolic PDE framework, namely, for hp-adaptive methods, implicit and structure preserving schemes.

\section*{Acknowledgments}
L. Micalizzi has been funded by the SNF grant 200020\_204917 ``Structure preserving and fast methods for hyperbolic systems of conservation laws'' and by the Forschungskredit grant FK-21-098.
D. Torlo has been funded by a SISSA Mathematical Fellowship.  M.  Han Veiga has been funded by the Michigan Institute for Data Science (MIDAS) and the Van Loo Postdoctoral fellowship at the University of Michigan.
\section*{Declarations of interest}
None.

\appendix

\section{Proofs}\label{app:proofs}
In this section, we report the proofs of several ``minor'' propositions and theorems that have been presented throughout the paper. 

\subsection{Proof of Proposition \ref{prop:r}}\label{app:r}
\begin{proof}
	\begin{subequations}
		We can equivalently prove that
		\begin{equation}
			\uvec{r}=B\undu_n.
			\label{eq:rBun}
		\end{equation}
		By a direct computation of the $\ell$-th component of both sides of the previous equation, thanks to the fact that the Lagrange functions are such that $\sum_{m=0}^M \psi^m\equiv 1$, we have
		\begin{align}
			\begin{split}
				\sum_{m=0}^M B_{\ell,m} \uvec{u}_n &=\sum_{m=0}^M \Bigg[ \psi^\ell(t_{n+1})\psi^m(t_{n+1})- \int_{t_{n}}^{t_{n+1}} \left(\frac{d}{dt}\psi^\ell(t)\right)\psi^m(t)dt \Bigg]\uvec{u}_n \\
				&= \psi^\ell(t_{n+1}) \uvec{u}_n- \int_{t_{n}}^{t_{n+1}} \left(\frac{d}{dt}\psi^\ell(t) \right)dt \uvec{u}_n\\
				&= [\psi^\ell(t_{n+1})-\psi^\ell(t_{n+1})+\psi^\ell(t_{n})]\uvec{u}_n=\psi^\ell(t_{n})\uvec{u}_n =\uvec{r}_{\ell},
			\end{split}
		\end{align}
		which implies \eqref{eq:rBun}.
	\end{subequations}
\end{proof}

\subsection{Proof of Proposition \ref{prop:iterative_procedure}: Convergence of the iterative procedure}\label{app:iterative_procedure}
\begin{proof}
	\begin{subequations}
		The proof relies on the definition of the map $\mathcal{J}:\mathbb{R}^{(M+1)\times Q} \rightarrow \mathbb{R}^{(M+1)\times Q}$, defined by
		\begin{equation}
			\mathcal{J}(\undu):=\undu_n+\Delta t B^{-1}\Lambda  \underline{\uvec{G}}(\undu), \quad \forall \undu \in \mathbb{R}^{(M+1)\times Q}.
		\end{equation} 
		We will now show that it is a contraction. We take two general vectors $\undv, \undw \in \mathbb{R}^{(M+1)\times Q}$ and, from the definition of $\underline{\uvec{G}}$ and the Lipschitz-continuity of $\uvec{G}$ with respect to $\uvec{u}$ uniformly with respect to $t$, basic computations give
		\begin{align}
			\begin{split}
				\norm{\mathcal{J}(\undv)-\mathcal{J}(\undw)}_\infty=\norm{ \Delta t B^{-1}\Lambda \left[ \underline{\uvec{G}}(\undv)-\underline{\uvec{G}}(\undw) \right]}_\infty
				\leq \Delta t \norm{B^{-1}\Lambda }_\infty C_{Lip} \norm{\undv-\undw }_\infty.
			\end{split}
		\end{align}
		The entries of $B$ and $\Lambda$, as well as $C_{Lip}$, are constants independent of $\Delta t$, therefore, for $\Delta t<\frac{1}{\widetilde{C}_{Lip}}$ with $ \widetilde{C}_{Lip}:=\norm{B^{-1}\Lambda }_\infty C_{Lip}$ we have that $\mathcal{J}$ is a contraction over $\mathbb{R}^{(M+1)\times Q}$ with respect to the infinity norm. 
		Thus, thanks to the Banach fixed-point theorem, we have that for $\Delta t<\frac{1}{\widetilde{C}_{Lip}}$ the map $\mathcal{J}$ has a unique fixed point, which can be obtained as the limit of the iteration $\undu^{(p)}:=\mathcal{J}(\undu^{(p-1)})$, independently of the choice of $\undu^{(0)}$. This fixed-point iteration is equivalent to the iterative procedure \eqref{eq:ADER_Picard}.
		Observing that a fixed point of $\mathcal{J}$ is also a solution of the nonlinear system \eqref{eq:ADER_system_final} and vice versa, we get the desired result.
	\end{subequations}
\end{proof}

\subsection{Proof of Theorem \ref{th:invB}: Invertibility of $B$}\label{app:invertB} 
\begin{proof}
	As shown in Theorem~\ref{th:link} and more precisely in Equation \eqref{eq:equivalence_B_matrices2}, the matrix $B$ exactly integrated, for any generic basis $\left\lbrace\hphi(\xi)\right\rbrace_{m=0,\dots,M}$, is equivalent to the matrix $B_{GL*}$ of the ADER method with $GL*$ subtimenodes up to the multiplication by the change of basis matrix $\mathcal{H}$ and its transpose, i.e., $B=\mathcal{H}^T B_{GL*}\mathcal{H}$. 
	
	Since the matrix $\mathcal{H}$ is invertible, we can prove the invertibility of $B$ for a particular basis and obtain the desired result for all the other bases. 
	Let us consider the modal basis functions $\hphi^m (\xi) := \xi^m$ for $m=0,\dots, M$. A direct computation shows that, in such a case, the matrix $B$ is defined as
	\begin{align}
		B_{\ell,m} = \hphi^\ell(1)\hphi^m(1) - \int_{0}^1 \left(\frac{d}{d\xi}\hphi^{\ell}(\xi)\right) \hphi^m (\xi)\, d\xi = 
		\begin{cases}
			1, \quad &\text{if}~\ell=0,\\
			\frac{m}{\ell+m}, \quad &\text{if}~\ell=1,\dots,M.\\
		\end{cases}
	\end{align}
	The first row of the matrix $B$ is given by $B_{0,m}=1$ for all $m=0,\dots,M$, while, the first column by $B_{\ell,0}=0$ for $\ell=1,\dots,M$. 
	Hence, the determinant of the matrix is equal to the determinant of the submatrix $B_{1:,1:}$ and we can focus on it. In particular, we will show that it is nonsingular and, therefore, that its determinant is nonzero.
	
	From basic linear algebra, a matrix is nonsingular if and only if its columns are linearly independent.
	Furthermore, the columns of a matrix are linearly independent if and only if multiplied by nonzero factors they are still linearly independent.
	Hence, let us consider the matrix $\widetilde{B}$ with generic entry given by $\widetilde{B}_{\ell,m}:=\frac{1}{m}B_{\ell,m}=\frac{1}{\ell+m}$ for $\ell,m=1,\dots,M$. 
	Such matrix is the so-called ``Hilbert'' matrix, which is invertible \cite{choi1983tricks}, and so also $B_{1:,1:}$ is invertible.
	Hence, $B$ has nonzero determinant and is invertible.
\end{proof}

\subsection{Proof of Theorem \ref{th:l2isRK}: The \ADERIWF~is an implicit RK}\label{app:l2isRK} 

\begin{proof}
	\begin{subequations}
		We have already observed that the nonlinear systems \eqref{eq:ADER_system_final} and \eqref{eq:weakproblemdiscrete} are equivalent and, identifying $\uvec{u}^m$ with RK stage values $\uvec{y}^s$, it is clear that \eqref{eq:ADER_system_final} is 
		the nonlinear system of a RK method, expressed by the first equation in \eqref{eq:RK_ACCURACY_NODAL}, characterized by $A=B^{-1}\Lambda$ and $\uvec{c}=\vecbeta$.
		We are left to check that the reconstruction formula \eqref{eq:uh} can be written as the RK final update, expressed by the second equation in \eqref{eq:RK_ACCURACY_NODAL}, with coefficients $b_m=w_m$.
		Manipulating the nonlinear system \eqref{eq:weakproblemdiscrete}
		we get 	\begin{align}
			\begin{split}
				\sum_{m=0}^M \Bigg[ \psi^{\ell}(t_{n+1})\psi^m(t_{n+1}) & -\int_{t_n}^{t_{n+1}} \left(\frac{d}{dt}\psi^\ell(t)\right)\psi^m(t)dt   \Bigg]\uvec{u}^{m}=\psi^\ell(t_n)\uvec{u}_n \\
				&+ \sum_{m=0}^M \left( \int_{t_n}^{t_{n+1}} \psi^\ell(t)\psi^m(t) dt \right) \uvec{G}(t^m,\uvec{u}^{m}) , \quad \ell=0,\dots,M,
			\end{split}
			\label{eq:weakproblemdiscrete2}
		\end{align}
		and, summing over $\ell$ and recalling the fact that $\sum_{\ell=0}^M \psi^{\ell} \equiv 1$, we obtain
		\begin{align}
			\begin{split}
				\uvec{u}_h(t_{n+1})=\sum_{m=0}^M  \psi^m(t_{n+1}) \uvec{u}^{m} =\uvec{u}_n + \sum_{m=0}^M \left( \int_{t_n}^{t_{n+1}} \psi^m(t) dt \right) \uvec{G}(t^m,\uvec{u}^{m}) =  \uvec{u}_n + \Delta t\sum_{m=0}^M w_m \uvec{G}(t^m,\uvec{u}^{m}).
			\end{split}
		\end{align}	
		
	\end{subequations}
\end{proof}

\subsection{Proof of Proposition \ref{prop:HORK_condition}}\label{app:HORK_condition} 
\begin{proof}
	\begin{subequations}
		Proving \eqref{eq:semirequiredforRK} is equivalent to prove that $A\uvec{1}=\uvec{c}$, where $\uvec{1}$ is a vector with all entries equal to $1$. Knowing that $A=B^{-1}\Lambda$ and $\uvec{c}=\vecbeta$, we can prove \eqref{eq:semirequiredforRK} by showing that $\Lambda\uvec{1}=B\vecbeta.$
		
		By a direct computation of the $\ell$-th component of both the sides, since $\sum_{m=0}^M \widehat{\psi}^m\equiv 1$, we have
		\begin{align}
			\left(\Lambda\uvec{1}\right)_{\ell}&=\sum_{m=0}^M \int_{0}^{1} \widehat{\psi}^\ell(\xi)\widehat{\psi}^m(\xi)d\xi=\int_{0}^{1} \widehat{\psi}^\ell(\xi)d\xi,\\
			\left(B\vecbeta\right)_{\ell}&=\sum_{m=0}^M \Bigg[ \widehat{\psi}^\ell(1)\widehat{\psi}^m(1)- \int_{0}^{1}\left(\frac{d}{d\xi}\widehat{\psi}^\ell(\xi)\right) \widehat{\psi}^m(\xi)d\xi \Bigg]\xi^m \nonumber\\
			&=\widehat{\psi}^\ell(1)\left( \sum_{m=0}^M  \xi^m \widehat{\psi}^m(1) \right)- \int_{0}^{1}\left(\frac{d}{d\xi}\widehat{\psi}^\ell(\xi)\right)\left( \sum_{m=0}^M \xi^m\widehat{\psi}^m(\xi) \right)d\xi. 
		\end{align}		
		Let us focus on $\left(B\vecbeta\right)_{\ell}$. Since $ \sum_{m=0}^M  \xi^m \widehat{\psi}^m$ is nothing but the interpolation of the linear function $\xi$, which is exact when interpolated in at least $2\leq M+1$ nodes, we can write
		\begin{align}
			\left(B\vecbeta\right)_{\ell}=  \widehat{\psi}^\ell(1)\cdot 1- \int_{0}^{1}\left(\frac{d}{d\xi}\widehat{\psi}^\ell(\xi)\right)\xi d\xi,
		\end{align}
		and integrating by parts we obtain
		\begin{align}
			\left(B\vecbeta\right)_{\ell}= \widehat{\psi}^\ell(1)\cdot 1- \left[\widehat{\psi}^\ell(1)\cdot 1-\widehat{\psi}^\ell(0)\cdot 0 \right]+ \int_{0}^{1}\widehat{\psi}^\ell(\xi)d\xi=\int_{0}^{1}\widehat{\psi}^\ell(\xi)d\xi,
		\end{align}
		and, thus, the desired result. 
	\end{subequations}
\end{proof}

\subsection{Proof of Proposition \ref{prop:ADERRK_Mp1}}\label{app:ADERRK_Mp1} 
\begin{proof}
	\begin{subequations}
		By direct computation, we will estimate the order of magnitude of the local truncation error.
		The properties of interpolation with $M+1$ subtimenodes guarantee that
		\begin{align}
			\uvec{u}(t)=\sum_{m=0}^M\uvec{u}(t^{m}) \psi^m(t)+O(\Delta t ^{M+1}),
			\qquad \uvec{G}(t,\uvec{u}(t))=\sum_{m=0}^M\uvec{G}(t^m,\uvec{u}(t^{m})) \psi^m(t)+O(\Delta t ^{M+1}).
		\end{align}				
		Thus, if we insert the exact solution of the ODEs system \eqref{eq:ODE} in the \ADERIWF~\eqref{eq:weakproblemdiscrete}, we get
		\begin{align}
			\begin{split}
				\sum_{m=0}^M \Bigg[ \psi^{\ell}(t_{n+1})\psi^m(t_{n+1}) & -\int_{t_n}^{t_{n+1}} \left(\frac{d}{dt}\psi^\ell(t)\right) \psi^m(t) dt   \Bigg]\uvec{u}(t^{m})-\psi^\ell(t_n)\uvec{u}_n \\
				&- \sum_{m=0}^M \left( \int_{t_n}^{t_{n+1}} \psi^\ell(t)\psi^m(t) dt \right) \uvec{G}(t^m,\uvec{u}(t^{m}))\\
				=\psi^{\ell}(t_{n+1})\uvec{u}(t_{n+1}) &-\int_{t_n}^{t_{n+1}} \left(\frac{d}{dt}\psi^\ell(t)\right)\uvec{u}(t)dt   -\psi^\ell(t_n)\uvec{u}_n \\
				&-   \int_{t_n}^{t_{n+1}}\psi^\ell(t)\uvec{G}(t,\uvec{u}(t))  dt+O(\Delta t^{M+1})\\
				=\int_{t_n}^{t_{n+1}}\psi^\ell(t)\Bigg[ \frac{d}{dt}\uvec{u}(t) &-\uvec{G}(t,\uvec{u}(t)) \Bigg] dt+O(\Delta t^{M+1}),\quad \ell=0,\dots,M.
			\end{split}
			\label{eq:local_truncation}
		\end{align}
		Hence, the coefficients $\uvec{u}^{m}$ are $O(\Delta t^{M+1})$ accurate with respect to the exact values $\uvec{u}(t^m)$.
		In the proof of Theorem \ref{th:l2isRK}, we showed that the final interpolation step to get $\uvec{u}_{n+1}$ is equivalent to the integration
		\begin{align}
			\begin{split}
				\uvec{u}_{n+1}=\uvec{u}_n + \sum_{m=0}^M \left( \int_{t_n}^{t_{n+1}} \psi^m(t) dt \right) \uvec{G}(t^m,\uvec{u}^{m})=\uvec{u}_n +   \int_{t_n}^{t_{n+1}}\left(\sum_{m=0}^M \uvec{G}(t^m,\uvec{u}^{m}) \psi^m(t)\right) dt  .
			\end{split}
		\end{align}
		So, recalling that we are assuming $\uvec{u}_{n}=\uvec{u}(t_n)$, we have
		\begin{align}
			\begin{split}
				\uvec{u}_{n+1}&=\uvec{u}_n +  \int_{t_n}^{t_{n+1}} \left[\uvec{G}(t,\uvec{u}(t))+O(\Delta t^{M+1}) \right] dt  
				=\uvec{u}(t_n) +  \int_{t_n}^{t_{n+1}}\uvec{G}(t,\uvec{u}(t)) dt+O(\Delta t^{M+2})\\
				&= \uvec{u}(t_{n+1})+O(\Delta t^{M+2}),
			\end{split}
		\end{align}
		which concludes the proof.		
	\end{subequations}
\end{proof}		

\section{Equivalence of \ADERRK-GLB and Lobatto IIIC}\label{app:GLB_Lobatto_IIIC}
In this section, we show the equivalence between  \ADERRK-GLB and Lobatto IIIC schemes. First of all, let us recall how Lobatto IIIC are defined.
\begin{theorem}[Chipman 1971 \cite{chipman1971stable}]\label{th:uniquenessLobatto}
	The Lobatto IIIC scheme with $S$ stages is uniquely defined by $S$ GLB quadrature points and associated weights, respectively as RK coefficients $\uvec{c}$ and $\uvec{b}$, and coefficients $a_{i,j}$ determined imposing the conditions 
	\begin{equation}\label{eq:conditionLobattoIIIC}
		a_{i,0}=b_0, \qquad \text{for } i=0,\dots,S-1
	\end{equation}
	and $\Cp(S-1)$, with definition given in \eqref{eq:condRKC}. 
	
\end{theorem}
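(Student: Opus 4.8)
The plan is to show that, once the nodes $\uvec{c}$ and weights $\uvec{b}$ are fixed to be the $S$ GLB quadrature data, the conditions \eqref{eq:conditionLobattoIIIC} and $\Cp(S-1)$ together determine every entry $a_{i,j}$ uniquely, so that exactly one RK scheme satisfies them. The key structural observation is that both families of conditions act row by row: for a fixed stage index $i$, they involve only the $S$ unknowns $a_{i,0},\dots,a_{i,S-1}$ of the $i$-th row of $A$ and never couple distinct rows. Hence it suffices to prove that, for each fixed $i$, the corresponding linear system in these $S$ unknowns has a unique solution.

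First I would count the equations row by row. Condition \eqref{eq:conditionLobattoIIIC} contributes the single scalar relation $a_{i,0}=b_0$, while $\Cp(S-1)$, written out for $z=1,\dots,S-1$, contributes the $S-1$ relations $\sum_{j=0}^{S-1} a_{i,j}\,c_j^{z-1}=\frac{c_i^z}{z}$. This gives $S$ linear equations for the $S$ unknowns in row $i$, so the system is square and the problem reduces to checking nonsingularity.

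The main step is then to exhibit invertibility explicitly. I would use \eqref{eq:conditionLobattoIIIC} to eliminate $a_{i,0}$, substituting $a_{i,0}=b_0$ into the $\Cp(S-1)$ relations; this leaves the reduced system
\begin{equation*}
	\sum_{j=1}^{S-1} a_{i,j}\, c_j^{z-1} = \frac{c_i^z}{z}-b_0\, c_0^{z-1}, \qquad z=1,\dots,S-1,
\end{equation*}
in the remaining unknowns $a_{i,1},\dots,a_{i,S-1}$. Its coefficient matrix is $\left(c_j^{z-1}\right)_{z,j=1,\dots,S-1}$, i.e. (the transpose of) a Vandermonde matrix built on the interior GLB nodes $c_1,\dots,c_{S-1}$. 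Since the GLB quadrature nodes are pairwise distinct, this Vandermonde matrix is nonsingular, so the reduced system admits a unique solution; combined with the fixed value $a_{i,0}=b_0$, this determines the whole $i$-th row. Running the argument over all $i=0,\dots,S-1$ fixes every $a_{i,j}$ and yields both existence and uniqueness of the scheme.

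I expect the main obstacle to be bookkeeping rather than depth: one must verify that the equation count matches the number of unknowns in each row and that exactly the $S-1$ interior nodes enter the Vandermonde matrix after the first column has been eliminated. The crucial structural input is the distinctness of the GLB nodes, which guarantees a nonzero Vandermonde determinant; beyond that, the statement follows directly from the invertibility of Vandermonde systems on distinct points.
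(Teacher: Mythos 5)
Your argument is correct, but note that the paper never proves this statement at all: it is quoted as an external result of Chipman (1971), and the paper only \emph{uses} it (together with Lemmas \ref{lem:conditionsRKC_ADER} and \ref{lem:conditionLobattoIIIC}) to conclude that \ADERRK-GLB coincides with Lobatto IIIC. Your proof therefore supplies a self-contained justification where the paper offers a citation. The substance of your argument is sound and is indeed the standard way to see that the defining conditions of Lobatto IIIC are well posed: both \eqref{eq:conditionLobattoIIIC} and $\Cp(S-1)$ act on one row of $A$ at a time, giving for each $i$ a square linear system of $S$ equations in $S$ unknowns; after substituting $a_{i,0}=b_0$, the reduced $(S-1)\times(S-1)$ coefficient matrix $\left(c_j^{z-1}\right)_{z,j=1,\dots,S-1}$ is a Vandermonde matrix on the pairwise distinct GLB nodes $c_1,\dots,c_{S-1}$, hence nonsingular, so each row of $A$ exists and is unique. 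Two small points of care. First, calling $c_1,\dots,c_{S-1}$ the ``interior'' nodes is imprecise (for GLB on $[0,1]$ one has $c_{S-1}=1$, an endpoint); what your argument actually needs is only that these $S-1$ nodes are pairwise distinct, which holds. Second, your proof establishes that the stated conditions admit exactly one solution $A$; this is precisely the content of ``uniquely defined'' here, because Lobatto IIIC is \emph{defined} in the literature (Chipman, Hairer--Wanner) by these very conditions. If one instead started from some other characterization of Lobatto IIIC (say, explicit tableau entries), one would additionally have to check that that tableau satisfies \eqref{eq:conditionLobattoIIIC} and $\Cp(S-1)$ before uniqueness lets you identify the two — a step worth stating explicitly if you intend your proof to replace the citation.
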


Then, we can show that also \ADERRK-GLB satisfies the same conditions.
\begin{lemma}\label{lem:conditionLobattoIIIC}
	\ADERRK-GLB satisfies condition \eqref{eq:conditionLobattoIIIC}.
\end{lemma}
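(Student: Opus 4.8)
The plan is to reduce the identity $a_{i,0}=b_0$ to a statement about the first column of $B^{-1}$, and then to evaluate that column explicitly using the partition-of-unity of the Lagrange basis and the fact that the first GLB subtimenode is $\xi^0=0$.

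First I would recall, from Theorem~\ref{th:l2isRK}, that \ADERRK-GLB is characterized by $A=B^{-1}\Lambda$, and, from Lemma~\ref{lem:conditionsRKD_ADER}, that the GLB-as-quadrature choice makes $\Lambda$ diagonal with $\Lambda_{\ell,m}=w_\ell\delta_{\ell,m}=b_\ell\delta_{\ell,m}$. Consequently the first column of $\Lambda$ is $b_0\,e_0$, with $e_0:=(1,0,\dots,0)^T$, so the first column of $A$ is $b_0\,B^{-1}e_0$ and $a_{i,0}=b_0\,(B^{-1})_{i,0}$. The claim therefore becomes $(B^{-1})_{i,0}=1$ for all $i$, i.e.\ $B^{-1}e_0=\uvec{1}$, or equivalently, after inverting, $B\uvec{1}=e_0$, with $\uvec{1}$ the all-ones vector.

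Next I would compute $B\uvec{1}$ directly. Its $\ell$-th entry is $\sum_{m=0}^M B_{\ell,m}$, and invoking the partition-of-unity $\sum_{m=0}^M\widehat\psi^m\equiv 1$---exactly the manipulation already carried out in the proof of Proposition~\ref{prop:r}---gives
\begin{equation*}
\sum_{m=0}^M B_{\ell,m}=\widehat\psi^\ell(1)-\int_0^1\left(\frac{d}{d\xi}\widehat\psi^\ell(\xi)\right)d\xi=\widehat\psi^\ell(1)-\left[\widehat\psi^\ell(1)-\widehat\psi^\ell(0)\right]=\widehat\psi^\ell(0).
\end{equation*}
The concluding step exploits the defining property of the GLB distribution, namely $\xi^0=0$: by the Lagrange interpolation property $\widehat\psi^\ell(0)=\widehat\psi^\ell(\xi^0)=\delta_{\ell,0}$, whence $(B\uvec{1})_\ell=\delta_{\ell,0}$, that is $B\uvec{1}=e_0$, which is precisely the reduced identity.

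I do not expect a genuine obstacle; the proof is short once the reduction is made. The two points requiring care are that the diagonality of $\Lambda$ is special to using the GLB nodes themselves as quadrature points (so the argument genuinely depends on Lemma~\ref{lem:conditionsRKD_ADER} and would not hold verbatim for a general ADER method), and that the identification $\widehat\psi^\ell(0)=\delta_{\ell,0}$ hinges on $\xi^0=0$. This last point also clarifies why the same computation does not force $a_{i,0}=b_0$ for GLG subtimenodes, whose first node is interior---consistent with the fact that \ADERRK-GLG is not of Lobatto IIIC type.
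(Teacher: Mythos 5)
Your proof is correct and follows essentially the same route as the paper: both reduce the claim, via the diagonality of $\Lambda$ under GLB-node quadrature, to the identity $B\uvec{1}=e_0$ (the paper writes this as $\Lambda_{:,0}=w_0 B\uvec{1}$), and both establish it by the partition-of-unity computation $\sum_{m}B_{\ell,m}=\widehat{\psi}^\ell(0)=\delta_{\ell,0}$, using that the first GLB node is $\xi^0=0$. The only cosmetic difference is that you cite the diagonality of $\Lambda$ from Lemma~\ref{lem:conditionsRKD_ADER}, whereas the paper re-derives it inside this proof; your closing remark on why the argument fails for GLG is a nice addition consistent with the paper's conclusions.
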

\begin{proof}
	\begin{subequations}
		Let us recall the RK structures of the \ADERRK~methods, given in Theorem \ref{th:l2isRK} 
		\begin{align}
			A:=B^{-1}\Lambda, \quad \uvec{c}:=\vecbeta, \quad  b_m:=\frac{1}{\Delta t}\int_{t_n}^{t_{n+1}}\psi^{m}(t)dt=\int_{0}^{1}\widehat{\psi}^{m}(\xi)d\xi=w_m.
		\end{align}	
		
		Assuming GLB subtimenodes and quadrature points (exact for polynomials of degree $2M-1$), we have that the matrix $B$ is computed exactly, while $\Lambda$ is under-integrated (as its terms involve integrals of polynomials of degree $2M$), leading to a diagonal $\Lambda$
		\begin{align}
			\begin{cases}
				B_{\ell,m}:=\widehat{\psi}^\ell(1)\widehat{\psi}^m(1)- \int_{0}^{1} \left(\frac{d}{d\xi}\widehat{\psi}^\ell(\xi)\right)\widehat{\psi}^m(\xi)d\xi,\\
				\Lambda_{\ell,m} : = w_\ell \delta_{\ell,m}= \sum_{k=0}^M w_k \widehat{\psi}^\ell(\xi_k)\widehat{\psi}^m(\xi_k) \approx \int_0^1 \widehat{\psi}^\ell(\xi)\widehat{\psi}^m(\xi) d \xi.
			\end{cases}
		\end{align}

		To prove that $a_{i,0}=b_0$ for all $i=0,\dots,S-1$, we have to prove that
		\begin{align*}
			(B^{-1}\Lambda)_{i,0} \stackrel{!}{=} w_0 \quad
			\Longleftrightarrow\quad (B^{-1}\Lambda)_{:,0} \stackrel{!}{=} 
			w_0 \underline{1}\quad
			\Longleftrightarrow\quad B^{-1}\Lambda_{:,0} \stackrel{!}{=}
			w_0 \underline{1}
			\quad
			\Longleftrightarrow  \quad  \Lambda_{:,0} \stackrel{!}{=}
			w_0 B \underline{1} ,
		\end{align*}
		where the symbol colon ``:'' is used to select all the elements of a given row or column.	
		
		This amounts to show that $\sum_{m=0}^{M} B_{\ell,m} w_0 = \delta_{\ell, 0} w_0$,
		which can be easily proven exploiting the definition of $B$ and the fact that $\sum_{m=0}^{M} \widehat{\psi}^{m} \equiv 1$:
		\begin{align}
			\sum_{m=0}^{M} B_{\ell,m} w_0 &=\widehat{\psi}^\ell(1) \cdot w_0- \int_{0}^{1} \left(\frac{d}{d\xi}\widehat{\psi}^\ell(\xi)\right)d\xi w_0=[\widehat{\psi}^\ell(1)-\widehat{\psi}^\ell(1)+\widehat{\psi}^\ell(0)]w_0 =  \widehat{\psi}^\ell(0) w_0 .
		\end{align}
		Since only the first GLB basis function, for $\ell=0$, is $1$ in $\xi=0$, while the other ones have value $0$ there, \eqref{eq:conditionLobattoIIIC} holds.
	\end{subequations}
\end{proof}

\begin{theorem}
	\ADERRK-GLB methods are Lobatto IIIC methods.
\end{theorem}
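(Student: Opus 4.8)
The plan is to invoke the uniqueness characterization of Lobatto IIIC schemes (Theorem~\ref{th:uniquenessLobatto}, due to Chipman) and verify that the \ADERRK-GLB method satisfies every one of its defining conditions, so that the two schemes must coincide. By that theorem, a Lobatto IIIC scheme with $S$ stages is \emph{uniquely} determined by three ingredients: the nodes $\uvec{c}$ and weights $\uvec{b}$ being the $S$ GLB quadrature points and weights, the boundary condition $a_{i,0}=b_0$ for all $i$, and the simplifying condition $\Cp(S-1)$. Therefore, to prove the theorem it suffices to check that \ADERRK-GLB matches all three.

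First I would recall, from Theorem~\ref{th:l2isRK}, that the \ADERRK~method is characterized by $A:=B^{-1}\Lambda$, $\uvec{c}:=\vecbeta$ and $b_m:=w_m$. For the GLB variant, $\uvec{c}$ is by construction the vector of GLB subtimenodes and $\uvec{b}$ the associated GLB quadrature weights, so the first ingredient of Theorem~\ref{th:uniquenessLobatto} is immediate. The remaining two conditions have already been established in the preceding results: condition $\Cp(S-1)$ holds by Lemma~\ref{lem:conditionsRKC_ADER}, since the GLB quadrature rule has degree of exactness $2S-3$, and the boundary condition \eqref{eq:conditionLobattoIIIC}, namely $a_{i,0}=b_0$, is exactly the content of Lemma~\ref{lem:conditionLobattoIIIC}.

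Thus the proof is essentially an assembly step: all three defining properties are available, and uniqueness closes the argument. I do not expect a genuine obstacle here, since the technical work is carried by the earlier lemmas; the only point requiring a little care is to confirm that the stage count matches, i.e.\ that the $S=M+1$ stages of \ADERRK-GLB correspond to the $S$-stage Lobatto IIIC family with the same nodes, which is guaranteed because both use the same $M+1=S$ GLB points. A concise way to write it is the following.

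\begin{proof}
	By Theorem~\ref{th:uniquenessLobatto}, the $S$-stage Lobatto IIIC scheme is the unique RK method whose coefficients $\uvec{c}$ and $\uvec{b}$ are the $S$ GLB quadrature points and weights and which satisfies both the boundary condition $a_{i,0}=b_0$ for $i=0,\dots,S-1$ and the simplifying condition $\Cp(S-1)$.

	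Consider the \ADERRK-GLB method with $S=M+1$ stages. By Theorem~\ref{th:l2isRK}, its coefficients are $A=B^{-1}\Lambda$, $\uvec{c}=\vecbeta$ and $b_m=w_m$. By the very definition of the method, the nodes $\vecbeta$ are the $S$ GLB subtimenodes and the weights $w_m$ are the corresponding GLB quadrature weights, so the first requirement of Theorem~\ref{th:uniquenessLobatto} is met. Moreover, Lemma~\ref{lem:conditionLobattoIIIC} shows that $a_{i,0}=b_0$ for all $i=0,\dots,S-1$, establishing the boundary condition, and Lemma~\ref{lem:conditionsRKC_ADER} shows that $\Cp(S-1)$ holds, since the GLB quadrature rule has degree of exactness $2S-3$.

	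Since \ADERRK-GLB satisfies all the defining conditions of the $S$-stage Lobatto IIIC scheme, and the latter is uniquely determined by them, the two methods coincide.
\end{proof}
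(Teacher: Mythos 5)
Your proof is correct and follows exactly the paper's own argument: it assembles Lemma~\ref{lem:conditionsRKC_ADER} (for $\Cp(S-1)$), Lemma~\ref{lem:conditionLobattoIIIC} (for condition \eqref{eq:conditionLobattoIIIC}), and the identification of $\uvec{c}$, $\uvec{b}$ with the GLB points and weights, then concludes by the uniqueness in Theorem~\ref{th:uniquenessLobatto}. No gaps; this matches the paper's proof in every essential step.
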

\begin{proof}
	\ADERRK-GLB methods are such that: $\uvec{c}$ and $\uvec{b}$ coincide with the GLB quadrature points and weights, $\Cp(S-1)$ holds for for Lemma~\ref{lem:conditionsRKC_ADER}, while Lemma \ref{lem:conditionLobattoIIIC} guarantees condition \eqref{eq:conditionLobattoIIIC}.
	Hence, \ADERRK-GLB methods satisfy the properties of Theorem~\ref{th:uniquenessLobatto}, which uniquely characterize the Lobatto IIIC methods. Thus, the two methods coincide.
\end{proof}

\section{\textit{A posteriori} limiting strategy}\label{app:limiter}
In this appendix, we describe the shock capturing strategy adopted to perform the last numerical test for PDEs characterized by shocks. 
The high-order space discretization is achieved via the SD scheme,  presented in Section~\ref{sec:SD}.  We adopt an \textit{a posteriori} limiting strategy similar to the one described in \cite{velasco2022spectral} and references therein.

The general methodology of the \textit{a posteriori} limiting consists in correcting the high-order fluxes,  that are responsible for oscillatory behavior,  after the computations of one time step have been completed. 
This is in contrast with \textit{a priori} limiting,  which modifies the high-order flux during the time step computation.  
In order to identify which fluxes must be corrected, we consider two physical criteria 
\begin{enumerate}
	\item positiveness of density and pressure;
	\item discrete maximum principle for the density.
\end{enumerate} 

As shown in \cite{velasco2022spectral}, the SD method is equivalent to a Finite Volume explicit Euler method using high-order fluxes $\widehat{\uvec{f}}^{HO,m}_i$ and a nonuniform (space-time) subcell grid, such that the update \eqref{eq:semidiscretizazion_SD} for the final iteration after ADER discretization can be equivalently written as
\begin{align}
	\bar{\uvec{u}}^{-1}_{i} &= \uvec{u}_{n,i},\\
	\tilde{\uvec{u}}^{m}_i &= \bar{\uvec{u}}^{m-1}_{i} - (t^m - t^{m-1})   \frac{\widehat{\uvec{f}}^{HO,m-1}_{i+1/2}-\widehat{\uvec{f}}^{HO,m-1}_{i-1/2}}{\Delta x_i},\qquad  \text{ for }m=0,\dots,M,\label{eq:update_bar_subcell}\\
	\uvec{u}_{n+1,i} &= \uvec{u}_{n,i} - \Delta t \sum_{m=0}^M w_m \partial_x \uvec{F}^{(P)}_h(x_i^s,t^m) =  \\
	&=\bar{\uvec{u}}^{M}_{i} - (t_{n+1} - t^{M})   \frac{\widehat{\uvec{f}}^{HO,M}_{i+1/2}-\widehat{\uvec{f}}^{HO,M}_{i-1/2}}{\Delta x_i} \label{eq:subcellHO_final_update}\\
	&=\uvec{u}_{n,i} - \sum_{m=0}^M (t^m - t^{m-1})   \frac{\widehat{\uvec{f}}^{HO,m-1}_{i+1/2}-\widehat{\uvec{f}}^{HO,m-1}_{i-1/2}}{\Delta x_i},
\end{align}
with the convention of $t^{-1}=t_n$.

Hence,  the limiting strategy is equivalent to replacing the high-order fluxes $\widehat{\uvec{f}}^{m}_{i+1/2}$ in the subcells that trigger the aforementioned criteria with a low-order flux $\uvec{f}^{LO,m}_{i+1/2}$. In this work, we use a simple first order Finite Volume scheme on the (space-time) subcell grid, referred as \textit{parachute scheme}.

The space-time update of the SD-ADER method with limiting is shown in Algorithm~\ref{algo:update}.  For further details, the interested reader is referred to \cite{velasco2022spectral}.
Finally, we point out that the cost of the \textit{a posteriori} limiter is proportional to the number of subtimenodes.

\begin{algorithm}
	\caption{Timestep evolution of SD-ADER with the \textit{a posteriori} limiting}
	\label{algo:update}
	\SetAlgoLined
	\textbf{Input:} Numerical solution $\uvec{u}_n$ at $t_n$,  $\Delta t$ \\
	\textbf{Output:} Numerical solution $\uvec{u}_{n+1}$ at $t_{n+1}$\\
	Initialize solution in the subtimenodes $\uvec{u}^{m,(0)} = \uvec{u}_n$ for $m=0,...,M$\\
	Compute the high order update of the ADER-SD scheme up to the final iteration $\uvec{u}^{m,(P)}$ for all $m=0,\dots,M$\\
	Convert the high-order fluxes into the subcell version $\widehat{\uvec{f}}_{i+1/2}^{HO,m}= \widehat{\uvec{f}}_{i+1/2}^{HO,m}(\uvec{u}^{m,(P)})$ for $m=0,...,M$, $\forall i$,\\
	\For{$m = 0, ... ,M$}{
		Compute low-order fluxes using solution $\bar{\uvec{u}}^m$ with parachute scheme\\
		Compute high-order candidate solution (shown $i$-th subcell) as in \eqref{eq:update_bar_subcell}\\
		Perform physical criteria check\\
		\If{$\tilde{\uvec{u}}^{m+1}_{i}$ {\rm is~\emph{troubled}}}
		{
			Replace $\widehat{\uvec{f}}^{HO,m}_{i\pm1/2}$ with fluxes of parachute scheme fluxes $\widehat{\uvec{f}}^{LO,m}_{i\pm1/2}$ in  \eqref{eq:update_bar_subcell}\\
			Replace surrounding fluxes $\widehat{\uvec{f}}^{HO,m}_{i\pm3/2}$ using parachute scheme fluxes $\widehat{\uvec{f}}^{LO,m}_{i\pm3/2}$ in  \eqref{eq:update_bar_subcell}\\
		}
	}
	Compute $\uvec{u}_{n+1,i}$ using \eqref{eq:subcellHO_final_update} with HO or LO fluxes according to the previous criteria.\\
\end{algorithm}

\section{Table of notation adopted in the paper}\label{app:notation}
In Table~\ref{tab:symbols}, we summarize the notation adopted for the main structures in the context of the ADER methods throughout the whole paper.

\begin{table}
	\begin{tabularx}{1.\textwidth}{|l|>{\centering\arraybackslash}X|}
		\hline
		$t_n$ & timenode \\
		\hline
		$\uvec{u}_n$ &  approximation of the solution of \eqref{eq:ODE} in $t_n$ \\
		\hline
		$\Delta t$ & timestep \\
		\hline
		$N$ & accuracy of the \ADERIWF~\eqref{eq:weakproblemdiscrete}\\ \hline
		$P$ & number of ADER iterations \\
		\hline
		$M+1$ & number of subtimenodes \\
		\hline
		$t^m, \, m=0,\dots,M$ & subtimenode in the interval $[t_n,t_{n+1}]$ \\
		\hline
		$\uvec{u}^m$ & reconstruction coefficients of solution of \eqref{eq:ODE} \\
		\hline
		$\psi^m$ & Lagrange basis function associated to $t^m$ \\
		\hline
		$\widehat{\psi}^m$ & Lagrange basis function associated to $\xi^m$ onto $[0,1]$ \\
		\hline
		$\xi^m$ & subtimenode $t^m$ remapped into the reference interval $[0,1]$ \\
		\hline
		$\omega_m$ & quadrature weight associated to $\widehat{\psi}^m$ \\
		\hline
		$\phi^m$ & generic basis function in the interval $[t_{n},t_{n+1}]$ \\
		\hline
		$\widehat{\phi}^m$ & generic basis function remapped into the reference interval $[0,1]$ \\
		\hline
		$\underline{\beta}$ & vector of the subtimenodes $\xi^m$\\
		\hline
		$\uvec{u}_h$ & interpolation of the numerical solution in the interval $[t_n,t_{n+1}]$ \\
		\hline
		$B,\Lambda$ & ADER matrices  \\
		\hline
		$\undu,\undr, \undG(\undu),\undu_{n}, \undv, $ & ADER vectors  \\
		\hline
		$\undu^{(p)}, \, p=0,\dots,P$ & vector of the approximations coefficients at the iteration $p$ \\
		\hline
		$\mathcal{H}$ & change of basis matrix \\
		\hline
		$S$ & number of RK stages \\
		\hline 
		$\uvec{y}^s, \, s=0,\dots,S-1$ & RK stage values \\
		\hline
		$a_{s,r}, b_{r}, c_{r}$ & RK coefficients \\
		\hline
		$A, \uvec{b}, \uvec{c}$ & RK structures \\
		\hline
		$\lopdt^1,\lopdt^2$  & DeC operators  \\
		\hline
		$\usol$  & solution of the $\lopdt^2$ operator  \\
		\hline
		$\lopdt^{1,(p)},\lopdt^{2,(p)},\mathcal{E}^{(p)},\Pi^{(p)}$  & structures of efficient DeC methods  \\
		\hline
		$\psi^{m,(p)}\quad 0,\dots,M^{(p)}$  & basis functions of ADERu, ADERdu, ADER-$L^2$  \\
		\hline
		$B^{(p)},\Lambda^{(p)},H^{(p)},\undu_n^{(p)},\undu^{*(p)},$  & structures of ADERu, ADERdu, ADER-$L^2$ \\
		$\undG^{*(p)},\Lambda^{(p,p-1)},\undr^{(p)},\underline{\beta}^{(p)}$  &   \\
		\hline
	\end{tabularx}
	\caption{Table of symbols \label{tab:symbols}}
\end{table}

	\bibliographystyle{abbrv}
	\bibliography{sn-bibliography}
	
\end{document}